\theoremstyle{plain}
\newtheorem{pretheo}{Theorem}[section]
\newtheorem{preassu}[pretheo]{Assumption}
\newtheorem{precoro}[pretheo]{Corollary}
\newtheorem{predefi}[pretheo]{Definition}
\newtheorem{preexam}[pretheo]{Example}
\newtheorem{prelemm}[pretheo]{Lemma}
\newtheorem{preprop}[pretheo]{Proposition}
\newtheorem{prerema}[pretheo]{Remark}
\newtheorem{preexer}{Exercise}[section]
\newenvironment{theo}{\begin{pretheo}}{\end{pretheo}}
\newenvironment{coro}{\begin{precoro}}{\end{precoro}}
\newenvironment{defi}{\begin{predefi}}{\end{predefi}}
\newenvironment{lemm}{\begin{prelemm}}{\end{prelemm}}
\newenvironment{prop}{\begin{preprop}}{\end{preprop}}
\newenvironment{rema}{\begin{prerema}\rm}{\end{prerema}}
\DeclareMathOperator{\di}{div}
\DeclareMathOperator{\tr}{tr}
\DeclareMathOperator{\Di}{Div}
\DeclareMathOperator{\sign}{sign}
\newcommand{\supp}{{\rm supp}\,}
\newcommand{\pa}{\partial}
\newcommand{\wh}[1]{\widehat{#1}}
\newcommand{\wt}[1]{\widetilde{#1}}
\newcommand{\Hol}{{\rm Hol}\,}
\newcommand{\BBA}{\mathbb{A}}
\newcommand{\BBB}{\mathbb{B}}
\newcommand{\BBC}{\mathbb{C}}
\newcommand{\BBD}{\mathbb{D}}
\newcommand{\BBF}{\mathbb{F}}
\newcommand{\BBG}{\mathbb{G}}
\newcommand{\BBI}{\mathbb{I}}
\newcommand{\BBL}{\mathbb{L}}
\newcommand{\BBN}{\mathbb{N}}
\newcommand{\BBR}{\mathbb{R}}
\newcommand{\BBS}{\mathbb{S}}
\newcommand{\Bf}{\boldsymbol{f}}
\newcommand{\Bg}{\boldsymbol{g}}
\newcommand{\Bn}{\boldsymbol{n}}
\newcommand{\Bu}{\boldsymbol{u}}
\newcommand{\Bv}{\boldsymbol{v}}
\newcommand{\Bw}{\boldsymbol{w}}
\newcommand{\BF}{\boldsymbol{F}}
\newcommand{\BG}{\boldsymbol{G}}
\newcommand{\BM}{\boldsymbol{M}}
\newcommand{\BU}{\boldsymbol{U}}
\newcommand{\BZ}{\boldsymbol{Z}}
\newcommand{\CA}{\mathcal{A}}
\newcommand{\CD}{\mathcal{D}}
\newcommand{\CE}{\mathcal{E}}
\newcommand{\CF}{\mathcal{F}}
\newcommand{\CG}{\mathcal{G}}
\newcommand{\CH}{\mathcal{H}}
\newcommand{\CL}{\mathcal{L}}
\newcommand{\CM}{\mathcal{M}}
\newcommand{\CP}{\mathcal{P}}
\newcommand{\CR}{\mathcal{R}}
\newcommand{\CS}{\mathcal{S}}
\newcommand{\CT}{\mathcal{T}}
\newcommand{\CV}{\mathcal{V}}
\newcommand{\CW}{\mathcal{W}}
\newcommand{\CX}{\mathcal{X}}
\newcommand{\CY}{\mathcal{Y}}
\newcommand{\Fa}{\mathfrak{a}}
\newcommand{\Fg}{\mathfrak{g}}
\newcommand{\Fh}{\mathfrak{h}}
\newcommand{\Fp}{\mathfrak{p}}
\newcommand{\FX}{\mathfrak{X}}
\newcommand{\ep}{\varepsilon}
\newcommand{\0}{\boldsymbol{0}}
\newcommand{\Btau}{\boldsymbol{\tau}}
\newcommand{\Bphi}{\boldsymbol{\phi}}
\newcommand{\BPhi}{\boldsymbol{\Phi}}
\newcommand{\BPsi}{\boldsymbol{\Psi}}
\newcommand{\sA}{\mathscr{A}}
\newcommand{\sF}{\mathscr{F}}
\def\sAP{\mathscr{A}_{\Phi}}
\def\zij{\zeta^i_j}
\def\tzij{\widetilde{\zeta}^i_j}
\numberwithin{equation}{section}
\newcommand{\ccomma}{\mathbin{\raisebox{0.5ex}{,}}}
\newcommand\reallywidehat[1]{%
\savestack{\tmpbox}{\stretchto{%
  \scaleto{%
    \scalerel*[\widthof{\ensuremath{#1}}]{\kern-.6pt\bigwedge\kern-.6pt}%
    {\rule[-\textheight/2]{1ex}{\textheight}}
  }{\textheight}%
}{0.5ex}}%
\stackon[1pt]{#1}{\tmpbox}%
}
\newcommand{\rwh}[1]{\reallywidehat{#1}}
\begin{document}
\title[Barotropic compressible flow with free surface]{The $\CR-$bounded operator families arising from the study of the barotropic compressible flows with free surface}

\author[Xin Zhang]{Xin Zhang}
\address{Research Institute of Science and Engineering, Waseda University, 3-4-1 Ookubo, Shinjuku-ku, Tokyo, 169-8555, Japan}
\email{xinzhang@aoni.waseda.jp}

\subjclass[2010]{Primary: 35Q30; Secondary: 76D05.}

\keywords{$\CR$-boundedness, barotropic compressible Navier-Stokes equations, resolvent problem, maximal regularity, analytic semigroup}


\date{\today}


\begin{abstract}
In this paper, we study some model problem associated to
the free boundary value problem of the barotropic compressible Navier-Stokes equations in general smooth domain with taking surface tension into account.
To obtain the maximal $L_p-L_q$ regularity property of the model problem,  
we prove the existence of $\CR-$bounded operator families of the resolvent problem via Weis' theory on operator valued Fourier multipliers.
\end{abstract}
\maketitle

%
%

\section{Introduction}\label{sec:intro}
\subsection{Model}
In this paper, we study the following boundary value problem in some general (bounded or unbounded) domain $\Omega\subset \BBR^N$ ($N\geq 2$) surrounded by two disjoint sharp surfaces $\Gamma_0$ and $\Gamma_1,$
\begin{equation}\label{eq:L_CNS_L}
	\left\{\begin{aligned}
	&\pa_t \eta + \gamma_1 \di \Bu  = d
	    &&\quad\hbox{in}\quad  \Omega \times \BBR_+, \\
&\gamma_1 \pa_t \Bu -\Di\big(\BBS(\Bu)-\gamma_2 \eta \BBI \big)= \Bf
		&&\quad\hbox{in}\quad  \Omega \times \BBR_+,\\
		&\big( \BBS(\Bu) - \gamma_2 \eta \BBI\big) \Bn_{\Gamma_0} 
+\sigma (m-\Delta_{\Gamma_0})h \,\Bn_{\Gamma_0} = \Bg
	    &&\quad\hbox{on}\quad  \Gamma_0 \times \BBR_+, \\
	    &\pa_t  h - \Bu \cdot \Bn_{\Gamma_0}  = k
	    &&\quad\hbox{on}\quad  \Gamma_0 \times \BBR_+,\\
&\Bu  = \0 &&\quad\hbox{on}\quad  \Gamma_1 \times \BBR_+, \\
		&(\eta, \Bu,h)|_{t=0} = (\eta_0, \Bu_0, h_0) &&\quad\hbox{in}\quad  \Omega.
	\end{aligned}\right.
\end{equation}
Above $\gamma_1=\gamma_1(x),$ $\gamma_2=\gamma_2(x)>0$ are smooth, 
the matrix $\BBS(\Bu)$ is defined by
\begin{equation*}
\BBS(\Bu):= \mu \BBD(\Bu)+ (\nu-\mu)\di \Bu\, \BBI
\end{equation*}
for the viscosity constants $\mu, \nu>0,$ 
$\BBD(\Bu):= \begin{bmatrix}\pa_k u_j + \pa_j u_k\end{bmatrix}_{N\times N}$ 
is called the (double) deformation tensor of $\Bu$ and $\BBI:= \begin{bmatrix}
\delta_{jk}
\end{bmatrix}_{N\times N}.$
In addition, for any vector $\Bu$ and any matrix $\BBA=\begin{bmatrix} A_{jk}\end{bmatrix}_{N\times N},$ we write $\di \Bu := \sum_{j=1}^N \pa_j u_j$
and $\Di \BBA := \sum_{k=1}^N \pa_k A_{jk}.$ 
In $\eqref{eq:L_CNS_L}_3,$ the constants $\sigma,m>0,$ $\Bn_{\Gamma_0}$ stands for the unit normal vector along $\Gamma_0$ and $\Delta_{\Gamma_0}$ for the Laplace-Beltrami operator of $\Gamma_0.$
Given the initial states $(\eta_0,\Bu_0, h_0)$ and source terms $d,\Bf,\Bg,k,$
the aim is to predict the variation of unknowns $(\eta, \Bu, h).$
\smallbreak

In fact, the model problem \eqref{eq:L_CNS_L} arises from the study the motion of the viscous gases governed by the barotropic compressible Navier-Stokes  equations (CNS) in some bounded or unbounded domain $\Omega_t \subset \BBR^N$ ($N\geq 2$) with taking the \emph{surface tension} into account.
For the free boundary value problem of (CNS), we need to determine not only the amplitude of the density, the velocity field of the fluid particles, but also the shape of moving domain $\Omega_t.$
In fact, the solvability of (CNS) can be reduced to the linearization form \eqref{eq:L_CNS_L}, which will be our forthcoming work. But let us emphasize here that the role of $h$ in \eqref{eq:L_CNS_L} is to handle the variation of the pattern $\Omega_t.$
\medskip

The study of (CNS) attracts the attention of many mathematicians for a long time.
One may refer to the following works and the references therein for a more complete list of previous works.   
The study of (CNS) is challenging even for the initial value problem because of the hyperbolicity from the conservation law of the mass. 
For instance, the long time issue for the initial value problem of (CNS) in the whole space was investigated by 
Matsumura \& Nishida \cite{MN1979,MN1980}, Hoff \cite{Hoff1995}, Hoff \& Zumbrun \cite{HZ1995} and Danchin \cite{Dan2000}.
On the other hand, for the non-slip (Dirichlet) boundary condition, we refer to the works \cite{MN1983} by Matsumura \& Nishida and \cite{KS1999} by Kobayashi \& Shibata  in the exterior domain, and \cite{KK2002,KK2005} by Kagei \& Kobayashi in the half space $\BBR^N_+$ ($N\geq 2$), and \cite{Kagei2008} by Kagei in the layer.
 
Next, concerning the free boundary value problem for (CNS) in some smooth bounded domain, Tani in \cite{Ta1981} and 
Secchi \& Valli in \cite{SV1983} established the short time solutions, and Zajaczkowski in \cite{Zaja1993} found some long time solutions by ignoring the role of surface tension  (i.e. $\sigma=0$).
The extension to the surface tension case was studied in \cite{SolTa1991,SolTa1992,Zaja1994}.
In particular, the authors of \cite{SolTa1991,SolTa1992,Zaja1994} proved the long time stability with respect to some trivial equilibrium states within the anisotropic Sobolev framework.

However, the aforementioned works on the free boundary value problem are in $L_2$ or H\"older regularity framework. 
To obtain the solutions with $L_p$ in time and $L_q$ in space ($L_p-L_q$ for short) regularity
\footnote{The maximal $L_p-L_q$ regularity of (CNS) is verified by Kakizawa in \cite{Kaki2011} with the Navier boundary condition in the bounded domain as well.}, 
we refer to the recent works \cite{EvBS2014,GS2014} by Shibata and his group for the case $\sigma=0.$ 
Moreover, for the study of the motion of the two-phase compressible flows, one may refer to \cite{JTW2016,KSS2016} and the references therein.
\medskip

Our purpose here is to tackle \eqref{eq:L_CNS_L} with the surface tension (i.e. $\sigma>0$) in maximal $L_p$-$L_q$ regularity framework. 
Here let us emphasize that $\Omega$ is not necessary bounded, as long as the boundaries of $\Omega$ are uniformly smooth.
Of course, $\Gamma_1=\emptyset$ in \eqref{eq:L_CNS_L} is allowed by refining our later proof. 
That is, we may consider the motion of some bounded isolated mass or the gases in some exterior region. 
More precisely, we prove that \eqref{eq:L_CNS_L} has a semigroup structure by imposing $(d,\Bf,\Bg,k)=\0$ (see Theorem \ref{thm:semigroup}). In addition, if the initial data vanish, then the solutions of \eqref{eq:L_CNS_L} admits the maximal $L_p$-$L_q$ regularity (see Theorem \ref{thm:maximal}). 

The idea to prove Theorem \ref{thm:semigroup} and Theorem \ref{thm:maximal}
are based on the analysis of the resolvent problem of \eqref{eq:L_CNS_L} (i.e. \eqref{eq:GR_CNS} below).
Most importantly, we show that the solution operator families of \eqref{eq:GR_CNS} are $\CR$-bounded, which allows us to apply the Weis' theory on operator valued Fourier multipliers in \cite{Weis2001}.
Furthermore, to overcome the main difficulty of the free boundary condition in \eqref{eq:L_CNS_L}, our study is reduced to some model problem in $\BBR^N_+$ associated to the generalized Lam\'e operator. In order to use the Weis's theory for the model in $\BBR^N_+,$ we have to treat the explicit solution formula in terms of Fourier transformation. 
Especially, tackling the surface equation in $\eqref{eq:L_CNS_L}_4$ is crucial in our work.
\smallbreak 

This paper is folded as follows. In the next section, we will state the main theorem 
(i.e.Theorem \ref{thm:GR_CNS}) concerning the generalized resolvent problem \eqref{eq:GR_CNS} and then the proofs of Theorem \ref{thm:semigroup} and Theorem \ref{thm:maximal} in view of Theorem \ref{thm:GR_CNS}.
Afterwards we will study resolvent problem over the half space $\BBR^N_+$ in Section \ref{sec:halfspace} and the bent half space in Section \ref{sec:bh} respectively.
Finally, we combine the estimates to obtain the results in general domain $\Omega$ in the last part of the paper.

\subsection{Notations and functional spaces}
Let us fix the notations in this paper.
In what follows, we denote the Fourier transform in $\BBR^N$ and its inverse by 
\begin{align*}
\CF_{x}[f] (\xi):= \int_{\BBR^N} e^{-i x\cdot \xi} f(x) dx, \quad
\CF_{\xi}^{-1}[g] (x):= \frac{1}{(2\pi)^N}\int_{\BBR^N} e^{i x\cdot \xi} g(\xi) d\xi.
\end{align*}
For $x=(x',x_{_N}), \xi=(\xi',\xi_{_N}) \in \BBR^N,$ we sometimes use the partial Fourier (inverse) transform with respect to the horizontal variables,
\begin{align*}
\CF_{x'}[f] (\xi',x_{_N})&:= \int_{\BBR^{N-1}} e^{-i x'\cdot \xi'} f(x',x_{_N}) dx', \\
\CF_{\xi'}^{-1}[g] (x',\xi_{_N})&:= \frac{1}{(2\pi)^{N-1}}\int_{\BBR^{N-1}} e^{i x'\cdot \xi'} g(\xi',\xi_{_N}) d\xi'.
\end{align*}
Besides, the letter $C(a,b,c,\cdots)$ or $C_{a,b,c,\dots}$denotes that the constant $C$ depends on $a,b,c,\dots$
\smallbreak

Hereafter, $L_q(\Omega)$ is the standard Lebesgue space in domain $G \subset \BBR^N,$ 
and $H^k_p(G)$ with $k\in \BBN$ and $1<q<\infty$ stands for the Sobolev space.
In addition, the Besov space $B^{s}_{q,p}(G)$ 
for some $k-1<s\leq k$ and $(p,q) \in ]1,\infty[^2$
is defined by the real interpolation functor
\begin{equation*}
B_{q,p}^{s}(G):= \big(L_q(G),H^{k}_q(\Omega)\big)_{s\slash k,p}.
\end{equation*} 
In particular, we write $W^{s}_q(G)=B^s_{q,q}(G)$ for simplicity, 
and $W^{-\bar{s}}_q(G)$ is the dual space of $W^{\bar{s}}_{q'}(G)$ for $0<\bar{s}<1$ and the conjugate index $q':=q\slash (q-1).$ 

For any Banach spaces $X,Y,$ the total of the bounded linear transformations from $X$ to $Y$ is denoted by $\CL(X;Y).$ We also write $\CL(X)$ for short if $X=Y.$ 
In addition, $\Hol (\Lambda;X)$ denotes the set of $X$ valued mappings defined on some domain $\Lambda \subset \BBC.$
\smallbreak

Now we give the conditions of the (uniformly) smoothness of $\Omega$ in the sequel. 
\begin{defi}\label{def:domain}
We say that a connected open subset $\Omega$ in $\BBR^N$ ($N \geq 2$) is of class uniform $W^{m-1\slash r}_r$ for some integer $m \geq 2$ and $1<r<\infty.$ if and only if the boundary $\pa \Omega$ is uniformly characterized by local $W^{m-1\slash r}_r$ graph functions.
That is, for any point $x_0=(x_0',x_{0N}) \in \pa \Omega,$ one can choose a Cartesian coordinate system with origin $x_0$ and coordinates $y=(y',y_{_N}):=(y_1,...,y_{_{N-1}},y_{_N}),$ as well as positive constants $\alpha, \beta, K$ and some $W^{m-1\slash r}_r$ function $h$ with $\|h\|_{W^{m-1\slash r}_r(B_{\alpha}'(x_0'))} \leq K$ such that 
\begin{gather*}
 \{(y',y_{_N}): h(y')-\beta <y_{_N} < h(y'), |y'|<\alpha \}=\Omega \cap U_{\alpha,\beta, h}(x_0),\\
 \{(y',y_{_N}): y_{_N} = h(y'), |y'|<\alpha \}  = \pa\Omega \cap U_{\alpha,\beta, h}(x_0),
\end{gather*}
where $U_{\alpha,\beta, h}(x_0):= \{(y',y_{_N}): h(y')-\beta <y_{_N} < h(y')+\beta, |y'|<\alpha \}$ and $B_\alpha'(x_0'):=\{y'\in \BBR^{N-1}: |y'-x_0'| < \alpha\}.$
Moreover, the choices of $\alpha, \beta, K$ are independent of the location of $x_0,$

Assume that $\Omega$ is some domain in $\BBR^N$ with disjoint boundaries $\Gamma_0$ and $\Gamma_1,$ where the case $\Gamma_0 = \emptyset$ or $\Gamma_2=\emptyset$ is allowed.
We say $\Omega$ is of type $W^{3,2}_r$ for simplicity, if $\Gamma_k$ is uniformly $W^{3-1\slash r -k}_{r}$ for $k=0,1.$ 
\end{defi}

At last, we recall \cite[Theorem 2.1]{Shi2016} on the Laplace-Beltrami operator.
For any $\lambda_0>0$ and $0<\ep<\pi\slash 2,$ we introduce the sectorial regions
\footnote{One may also refer to the Figure \ref{fig:sect} below for $\Sigma_{\ep,\lambda_0}.$}
\begin{gather*}
\Sigma_{\ep}:=\big\{z \in \BBC \backslash \{0\} : |\arg{z}| \leq \pi -\ep \big\},
\quad 
\Sigma_{\ep,\lambda_0}:=\{z \in \Sigma_{\ep} : |z|\geq \lambda_0\}.
\end{gather*}
\begin{prop}\label{prop:resolvent_LB}
Let $0<\ep<\pi\slash 2,$ $1<q,q':=q\slash (q-1)<\infty,$ $N<r<\infty$ and 
$r \geq  \max\{q,q'\}.$ For any uniform $W^{2-1\slash r}_r$ boundary $\Gamma \subset \pa \Omega,$ there exists a constant $\lambda_1=\lambda_1(\ep,\Gamma)>0,$ such that $\Sigma_{\ep,\lambda_1}$ is contained in the resolvent set $\rho(\Delta_{\Gamma})$ of $\Delta_{\Gamma}.$ That is, for any $\lambda \in \Sigma_{\ep,\lambda_1}$ and $f \in W^{-1\slash q}_q(\Gamma),$ 
the resolvent problem 
\begin{equation*}
(\lambda -\Delta_{\Gamma}) u =f \,\,\, \text{on} \,\,\, \Gamma
\end{equation*}
admits a unique solution $u \in W^{2-1\slash q}_q(\Gamma)$ possessing the estimates
\begin{equation*}
\|u\|_{W^{2-1\slash q}_q(\Gamma)} \leq C_{\ep,q,r,\Gamma} \|f\|_{W^{-1\slash q}_q(\Gamma)}.
\end{equation*}
\end{prop}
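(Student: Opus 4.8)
Here is the strategy I would follow. The estimate is essentially local, so the plan is to localize $\Gamma$ by its graph charts, reduce to a second-order elliptic problem on $\mathbb{R}^{N-1}$, solve a frozen-coefficient model by Fourier analysis, and patch. By Definition~\ref{def:domain} one can cover $\Gamma$ by charts $U_j:=U_{\alpha,\beta,h_j}(x_j)$ with $\alpha,\beta,K$ independent of $j$, and fix a partition of unity $\{\varphi_j^2\}$ with $\sum_j\varphi_j^2\equiv 1$ on $\Gamma$, uniformly bounded $W^2_\infty$ norms and finite overlap. Pulled back to $\mathbb{R}^{N-1}$ in the chart around $x_j$, $\Delta_\Gamma$ takes the divergence form $\mathcal{L}_j v=g_j^{-1/2}\partial_a\!\big(g_j^{1/2}g_j^{ab}\,\partial_b v\big)$, with $(g_j^{ab})$ symmetric and uniformly elliptic. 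Since $\Gamma$ is uniformly $W^{2-1/r}_r$ and $r>N$, the graph functions $h_j$ lie in $W^{2-1/r}_r$ with norms $\le K$, hence $g_j^{ab},g_j^{\pm1/2}\in W^{1-1/r}_r(\mathbb{R}^{N-1})\hookrightarrow C^{0,1-N/r}(\mathbb{R}^{N-1})$ with uniform bounds (using that $W^{1-1/r}_r$ is a multiplication algebra when $r>N$). As $r\ge\max\{q,q'\}$, multiplication by these coefficients is bounded on $W^{1-1/q}_q$ and, by duality, on $W^{-1/q}_q$ and $W^{1/q}_{q'}$; in particular each $\mathcal{L}_j$ maps $W^{2-1/q}_q\to W^{-1/q}_q$. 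I would keep $\Delta_\Gamma$ throughout in this weak form, pairing $W^{-1/q}_q$ data against $W^{1/q}_{q'}$ test functions.

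Freezing the coefficients of $\mathcal{L}_j$ at $x_j$ produces $\mathcal{L}^0=\sum_{a,b}A^{ab}\partial_a\partial_b$ with $(A^{ab})$ symmetric positive definite; its resolvent is the Fourier multiplier $m_\lambda(\xi)=(\lambda+A(\xi))^{-1}$, $A(\xi)=\sum_{a,b}A^{ab}\xi_a\xi_b\ge c|\xi|^2$. For $\lambda\in\Sigma_\varepsilon$ one has $|\lambda+A(\xi)|\ge c_\varepsilon(|\lambda|+|\xi|^2)$, and a routine quotient-rule computation gives
\[
\big|\partial_\xi^\alpha\big(\langle\xi\rangle^{2}m_\lambda(\xi)\big)\big|\le C_\alpha |\xi|^{-|\alpha|},\qquad \langle\xi\rangle:=(1+|\xi|^2)^{1/2},
\]
uniformly in $\lambda\in\Sigma_\varepsilon$ with $|\lambda|\ge 1$, together with the analogous Mikhlin bounds for $|\lambda|\,m_\lambda(\xi)$ and for $|\lambda|^{1/2}\langle\xi\rangle m_\lambda(\xi)$. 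Since $W^s_q=B^s_{q,q}$, the Fourier multiplier theorem on Besov spaces — which needs only such Mikhlin-type estimates and holds for all $s\in\mathbb{R}$, $1<q<\infty$ — yields a solution operator $(\lambda-\mathcal{L}^0)^{-1}: W^{-1/q}_q(\mathbb{R}^{N-1})\to W^{2-1/q}_q(\mathbb{R}^{N-1})$ with
\[
|\lambda|\,\|u\|_{W^{-1/q}_q}+|\lambda|^{1/2}\|u\|_{W^{1-1/q}_q}+\|u\|_{W^{2-1/q}_q}\le C\,\|f\|_{W^{-1/q}_q}
\]
uniformly in $\lambda\in\Sigma_\varepsilon$, $|\lambda|\ge 1$.

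To pass from the model back to $\Gamma$, write $\mathcal{L}_j=\mathcal{L}^0_j+(\mathcal{L}_j-\mathcal{L}^0_j)$. The top-order part of the difference has coefficients whose $L_\infty(B'_\rho(x_j))$-norm tends to $0$ as $\rho\to 0$ uniformly in $j$, by the equicontinuity coming from the uniform bound in $W^{1-1/r}_r\hookrightarrow C^{0,1-N/r}$; localizing with cutoffs of support radius $\rho$ makes this perturbation small on $W^{-1/q}_q$ via the $\|u\|_{W^{2-1/q}_q}$ estimate. The resulting first- and zeroth-order terms (including the commutators of $\mathcal{L}_j$ with the cutoffs) are absorbed using the extra powers of $|\lambda|$ above, at the cost of taking $|\lambda|\ge\lambda_1$ with $\lambda_1$ depending only on $\varepsilon$ and on $\alpha,\beta,K$, hence only on $\varepsilon$ and $\Gamma$. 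Gluing the local solution operators with $\{\varphi_j\}$ and using the finite overlap gives a parametrix $P_\lambda$ on $\Gamma$ with $(\lambda-\Delta_\Gamma)P_\lambda=I+R_\lambda$, $\|R_\lambda\|_{\mathcal{L}(W^{-1/q}_q(\Gamma))}\le 1/2$; a Neumann series produces the right inverse with the claimed bound, and the analogous left parametrix (or the self-adjoint $L_2(\Gamma)$ theory together with density) gives injectivity and the uniqueness statement.

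The main obstacle is the low regularity of the metric: since $h_j\in W^{2-1/r}_r$ only, one has $g^{ab}_j\in W^{1-1/r}_r$ and $\partial g^{ab}_j\in W^{-1/r}_r$, so the first-order part of $\Delta_\Gamma$ written in non-divergence form is not a classical lower-order perturbation. The remedy — and the reason the hypotheses are stated as they are — is to work exclusively with the divergence/weak formulation: there only the products $W^{1-1/r}_r\cdot W^{1-1/q}_q\hookrightarrow W^{1-1/q}_q$ and $W^{1-1/r}_r\cdot W^{1/q}_{q'}\hookrightarrow W^{1/q}_{q'}$ are needed, and $r>N$ together with $r\ge\max\{q,q'\}$ is exactly what makes them continuous. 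The second point demanding care is that every constant above must be independent of $x_0\in\Gamma$, which is precisely where the \emph{uniform} $W^{m-1/r}_r$ hypothesis on $\partial\Omega$ from Definition~\ref{def:domain} enters.
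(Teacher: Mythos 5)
The paper does not give a proof of this proposition: it is stated as a recalled result with the citation \cite[Theorem 2.1]{Shi2016}, so there is no ``paper's own proof'' to compare against.

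Your localization--freezing--parametrix sketch is the standard route to uniform resolvent bounds for $\Delta_\Gamma$ on a uniformly regular hypersurface, and it matches the structure of the argument in the Shibata-school references that this paper uses throughout. You correctly identify the two hinges: (i) the metric $g^{ab}$ only lies in $W^{1-1/r}_r(\BBR^{N-1})$, so $\Delta_\Gamma$ must be kept in divergence form to avoid differentiating the coefficients, with $r>N$ giving $W^{1-1/r}_r\hookrightarrow C^{0,1-N/r}$ (hence a multiplication algebra) and $r\ge\max\{q,q'\}$ giving boundedness of coefficient multiplication on $W^{1-1/q}_q$, $W^{-1/q}_q$ and $W^{1/q}_{q'}$; and (ii) the uniformity of the chart constants $(\alpha,\beta,K)$ from Definition~\ref{def:domain}, which is what makes the smallness radius $\rho$, the absorption threshold $\lambda_1$, and the final constant independent of the chart index $j$. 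The frozen-coefficient resolvent via Mikhlin multipliers on $B^s_{q,q}$, the small-perturbation absorption using the full triple $(|\lambda|\,\|u\|_{W^{-1/q}_q}, |\lambda|^{1/2}\|u\|_{W^{1-1/q}_q}, \|u\|_{W^{2-1/q}_q})$, the commutator terms with the cutoffs, and the Neumann-series parametrix with finite overlap are all correct and complete in outline. The one step you treat a bit lightly is uniqueness: the $L_2(\Gamma)$ self-adjointness plus density remark works, but one should be explicit that the same construction in $W^{1/q}_{q'}$ and duality with the right inverse gives the left inverse, since $\Delta_\Gamma$ is not a priori known to be closed on $W^{-1/q}_q$. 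Subject to that, I see no gap, and your sketch is in line with the cited source.
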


\section{Main results}
In this section, we first state the results for the resolvent problem of \eqref{eq:L_CNS_L} in Subsection \ref{subsec:RP}. 
Then by applying the estimates of the resolvent problem, we can prove the existence of the semigroup of solution operators associated to \eqref{eq:L_CNS_L} in Subsection \ref{subsec:SG} and the property of maximal regularity of \eqref{eq:L_CNS_L} in the last part.

\subsection{Reduced resolvent problem}
\label{subsec:RP}
Now, let us begin with the following resolvent problem of \eqref{eq:L_CNS_L},
\begin{equation}\label{eq:GR_CNS}
	\left\{\begin{aligned}
	&\lambda \eta + \gamma_1 \di \Bu  = d
	    &&\quad\hbox{in}\quad  \Omega, \\
&\gamma_1 \lambda \Bu -\Di\big(\BBS(\Bu)-\gamma_2 \eta \BBI \big)= \BF
		&&\quad\hbox{in}\quad  \Omega,\\
		&\big( \BBS(\Bu) - \gamma_2 \eta \BBI\big) \Bn_{\Gamma_0} 
+\sigma (m-\Delta_{\Gamma_0})h \,\Bn_{\Gamma_0} = \BG
	    &&\quad\hbox{on}\quad  \Gamma_0, \\
	    &\lambda  h - \Bu \cdot \Bn_{\Gamma_0}  = K
	    &&\quad\hbox{on}\quad  \Gamma_0,\\
&\Bu  = \0 &&\quad\hbox{on}\quad  \Gamma_1, 
	\end{aligned}\right.
\end{equation}
where $\gamma_1 =\gamma_1(x)$ and $\gamma_2 =\gamma_2(x)$ are uniformly continuous functions defined on $\overline{\Omega}.$ Moreover, there exist constants $\rho_1,\rho_2,\rho_3$ such that
\begin{gather}\label{hyp:gamma_GR}
0<\rho_1 \leq \gamma_1(x) \leq \rho_2,\quad 
0 <\gamma_2(x) \leq \rho_2, \,\,\, \forall \,\, x \in \overline{\Omega},\\ \nonumber
 \|(\nabla \gamma_{1},\nabla \gamma_{2})\|_{L_r(\Omega)}  \leq \rho_2,  \quad 
  \rho_3:= \max\big\{ \rho_2, \|\gamma_1\gamma_2\|_{L_\infty(\Omega) \cap \wh H^1_r(\Omega)}\big\},
\end{gather}
with $N<r<\infty.$
For any $\nu, \lambda_0>0$ and $0<\ep<\pi\slash 2,$ we set that
\begin{gather}
\Lambda_{\ep, \lambda_0} := \Big\{z \in \Sigma_{\ep,\lambda_0} : 
\big(\Re z + \frac{\rho_3}{\nu} +\ep\big)^2 +(\Im z )^2 
\geq \big( \frac{\rho_3}{\nu} +\ep\big)^2  \Big\}.\label{def:Lambda}
\end{gather}
One may refer to the Figure \ref{fig:sect} for the graph of $\Lambda_{\ep,\lambda_0}.$
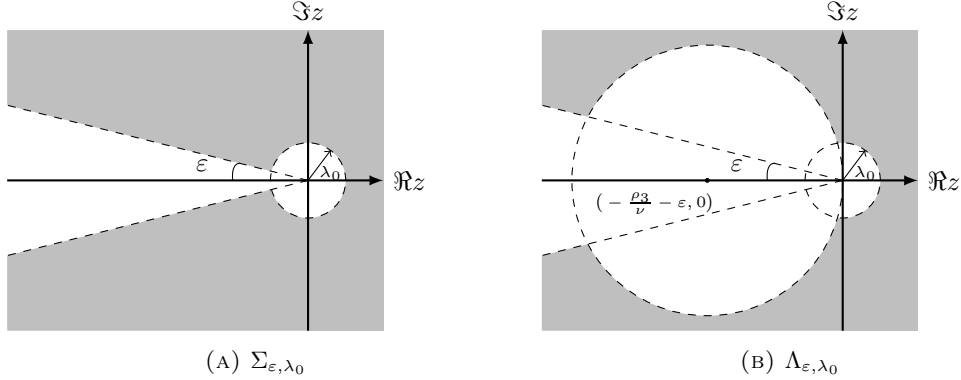
\begin{figure}[h]  
\centering 
  \begin{subfigure}[b]{0.4\linewidth}
    \begin{tikzpicture}   
     \path [fill=lightgray] (-4,2)--(-4,1)--(0,0)--(0,2) --(-4,2);  
     \path [fill=lightgray] (-4,-2)--(-4,-1)--(0,0)--(0,-2) --(-4,-2);  
     \path [fill=lightgray] (0,2) --(0,-2)--(1,-2)--(1,2)--(0,2);  
     \path [fill=white] (0,0) circle [radius=0.5];
     \coordinate (Origin)   at (0,0);
    \coordinate (XAxisMin) at (-4,0);
    \coordinate (XAxisMax) at (1,0);
    \coordinate (YAxisMin) at (0,-2);
    \coordinate (YAxisMax) at (0,2);
    \draw [thick,-latex] (XAxisMin) -- (XAxisMax) 
             node[right] {$\Re z$};
    \draw [thick,-latex] (YAxisMin) -- (YAxisMax)
              node[above] {$\Im z$};       
      \draw [dashed]  (0,0) -- (-4,1);  
      \draw [dashed]  (0,0) -- (-4,-1);   
      \draw (-1,0) to [out=100, in=170] (-0.9,0.225) ;
      \draw (-1.2,0.2) node[left]{$\varepsilon$};
      \draw [dashed] (0,0) circle [radius=0.5];
      \draw [->] (0,0) -- (0.3, 0.4);      
      \draw  (0.3, 0.1) node{\tiny $\lambda_0$};
    \end{tikzpicture}
    \caption{$\Sigma_{\varepsilon,\lambda_0}$}
  \end{subfigure}  
  \quad
\begin{subfigure}[b]{0.4\linewidth}
  \begin{tikzpicture}  
     \path [fill=lightgray] (-4,2)--(-4,1)--(0,0)--(0,2) --(-4,2);  
     \path [fill=lightgray] (-4,-2)--(-4,-1)--(0,0)--(0,-2) --(-4,-2);  
     \path[fill=lightgray] (0,2) --(0,-2)--(1,-2)--(1,2)--(0,2);  
     \path [fill=white] (0,0) circle [radius=0.5];
     \path [fill=white] (-1.8,0) circle [radius=1.8];
     \draw[fill] (-1.8,0) circle [radius=0.025];
     \draw (-2.5,-0.3) node{\tiny 
     $\big(-\frac{\rho_3}{\nu}
      -\varepsilon ,0\big)$};
    \coordinate (Origin)   at (0,0);
    \coordinate (XAxisMin) at (-4,0);
    \coordinate (XAxisMax) at (1,0);
    \coordinate (YAxisMin) at (0,-2);
    \coordinate (YAxisMax) at (0,2);
    \draw [thick,-latex] (XAxisMin) -- (XAxisMax) 
             node[right] {$\Re z$};
    \draw [thick,-latex] (YAxisMin) -- (YAxisMax)
              node[above] {$\Im z$};   
      \draw [dashed]  (0,0) -- (-4,1);  
      \draw [dashed]  (0,0) -- (-4,-1);   
      \draw (-1,0) to [out=100, in=170] (-0.9,0.225) ;
      \draw (-1.2,0.2) node[left]{$\varepsilon$};
      \draw [dashed] (0,0) circle [radius=0.5];
       \draw [dashed] (-1.8,0) circle [radius=1.8];
      \draw [->] (0,0) -- (0.3, 0.4);      
      \draw  (0.3, 0.1) node{\tiny $\lambda_0$};   
\end{tikzpicture}
\caption{$\Lambda_{\varepsilon,\lambda_0}$} 
\end{subfigure}
\caption{Sectorial regions $\Sigma_{\ep,\lambda_0}$ and $\Lambda_{\ep,\lambda_0}$}
\label{fig:sect}
\end{figure}  
\medskip

Next, we recall the basic theory of the $\CR-$boundedness of operator families (see \cite{DHP2003,KW2004} for further discussions).
\begin{defi}\label{def:R-bounded}
Let $X,Y$ be two Banach spaces and $\CL(X;Y)$ be the collection of all bounded linear operators from $X$ to $Y.$
We say that a family of bounded operators $\tau \subset \CL(X,Y)$ is $\CR$-bounded if for any $N \in {\BBN},$ $T_j \in \tau,$ $x_j \in X$ and the Rademacher functions $r_j (t):= \sign (\sin 2^j \pi t )$ defined for $t \in [0,1],$  the following inequality holds,
\begin{equation*}
\Big\|\sum_{j=1}^N r_j T_j x_j\Big\|_{L_p([0,1];Y)} \leq C_p \Big\|\sum_{j=1}^N r_j x_j\Big\|_{L_p([0,1];X)}
\,\,\, \mbox{for some} \,\,p \in [1,\infty[.
\end{equation*} 
Above the choice of $C_p$ depends only on $p$ but not on $N,$ $T_j,$ $x_j,$ $r_j$ and $1 \leq j\leq N.$ The smallest $C_p$ is called $\CR$-bound of $\tau$, denoted by $R_{\CL(X;Y)}(\tau).$
\end{defi}
Some useful comments on Definition \ref{def:R-bounded}:
\begin{rema}\label{rmk:R-bounded}
Let $X, Y, Z$ be Banach spaces. 
\begin{enumerate}
\item $\tau \subset \CL(X,Y)$ is $\CR$-bounded for any $p\in [1,\infty[$ if $\tau$ is $\CR$-bounded for some $p_0 \in [1,\infty[.$ 
\item Suppose that $\CT$ and $\CS$ are two $\CR-$bounded famlies in $\CL(X;Y).$ Then the sum set
$$\CT + \CS:=\{T+S: T \in \CT, S\in \CS\}$$  
is $\CR-$bounded as well, and 
$\CR_{\CL(X;Y)} (\CT + \CS) \leq \CR_{\CL(X;Y)} (\CT )+\CR_{\CL(X;Y)} (\CS).$ 
\item Assume that the families $\CT\subset \CL(X;Y)$ and $\CS \subset \CL(Y;Z)$ are $\CR-$bounded. Then the composition set
$$\CS\CT :=\{S\circ T: T \in \CT, S\in \CS\} \subset \CL(X;Z)$$  
is $\CR-$bounded as well, and $\CR_{\CL(X;Z)} (\CS\CT) \leq \CR_{\CL(X;Y)} (\CT ) \CR_{\CL(Y;Z)} (\CS).$ 
\item Let $\CT \subset \CL\big(L_{q}(G)\big)$ be a family of operators for $1 < q<\infty$ and some domain $G \subset \BBR^N.$ Then $\CT$ is $\CR-$bounded if and only if there is a constant $C$ such that
\begin{equation*}
\Big\| \big(\sum_{j=1}^{N_0} |T_j f_{j}|^2 \big)^{1\slash 2} \Big\|_{L_q(G)}
\leq C\Big\| \big(\sum_{j=1}^{N_0} |f_{j}|^2 \big)^{1\slash 2} \Big\|_{L_q(G)},
\end{equation*} 
for any $N_0 \in \BBN,$ $f_j \in L_q(G)$ and $T_j \in \CT.$
In particular, $\{ T_{\lambda} : T_{\lambda}f := \lambda^{-s} f, \lambda \in \Sigma_{\ep,\lambda_0}\}$ for $s,\lambda_0>0$ is a $\CR$-bounded family in $\CL\big(L_q(G)\big).$
\end{enumerate}
\end{rema}
To prove the maximal regularity property of the model problem, we need the following theorem on the Fourier multiplier obtained in \cite{Weis2001}.
\begin{theo}[Weis] \label{thm:Weis}
Let $X$ and $Y$ be two UMD Banach spaces 
\footnote{One may refer to \cite{KW2004} for the UMD property.}
and $1<p<\infty.$ Let $M(\cdot)$ be a mapping in  
$C^{1}\big(\BBR\backslash \{0\}; \CL(X;Y) \big)$ such that
\begin{equation*}
\CR_{\CL(X; Y)} \big(\big\{ (\tau \pa_{\tau})^{\ell}   M (\tau) : \tau \in \BBR\backslash \{0\} \big\}\big) 
\leq r_b \quad  (\ell =0,1),
\end{equation*} 
with some constant $r_b>0.$ Then the multiplier operator $T_{M}(\varphi) := \CF^{-1} \big[M\CF[\varphi] \big]$ for any $\varphi \in \CS(\BBR;X)$ can be uniquely extended to a bounded linear operator from $L_p(\BBR;X)$ into $L_p(\BBR;Y)$ with the bound
\begin{equation*}
\|T_{M}\|_{\CL\big(L_p(\BBR;X);L_p(\BBR;Y)\big)} \leq C_{p,X,Y} r_b.
\end{equation*}
\end{theo}

With above definitions and comments, our main result for the model problem \eqref{eq:GR_CNS} is as follows.
\begin{theo}\label{thm:GR_CNS}
Let $0<\ep<\pi\slash 2,$ $\sigma, \mu, \nu>0,$ $1<q,q':=q\slash (q-1)<\infty,$ $N<r<\infty$ and 
$r \geq  \max\{q,q'\}.$ Assume that $\Omega$ is of type $W^{3,2}_r,$ $m\geq \lambda_1(\ep,\Gamma_0)$ by Proposition \ref{prop:resolvent_LB}, and  \eqref{hyp:gamma_GR} is satisfied. 
Set that 
\begin{gather*}
X_q(\Omega) := H^1_q(\Omega) \times L_q(\Omega)^N \times H^{1}_q(\Omega)^N\times W^{2-1\slash q}_q(\Gamma_0),\\
\CX_q(\Omega) :=H^1_q(\Omega) \times L_q(\Omega)^N  \times L_q(\Omega)^N \times H^{1}_q(\Omega)^N\times W^{2-1\slash q}_q(\Gamma_0).
\end{gather*}
For any $(d,\BF,\BG,K) \in X_q(\Omega),$ there exist constants $\lambda_0,r_b \geq 1$ and operator families
\begin{align*}
\CP(\lambda,\Omega) & \in 
\Hol\Big( \Lambda_{\ep,\lambda_0} ; \CL\big(\CX_q(\Omega);H^1_q(\Omega) \big) \Big),\\
\CA(\lambda,\Omega) & \in 
\Hol\Big( \Lambda_{\ep,\lambda_0} ; \CL\big(\CX_q(\Omega);H^2_q(\Omega)^N \big) \Big),\\
\CH(\lambda,\Omega) & \in 
 \Hol\Big( \Lambda_{\ep,\lambda_0} ; \CL\big(\CX_q(\Omega);W^{3-1\slash q}_q(\Gamma_0) \big) \Big),
\end{align*}
such that $(\eta,\Bu,h):=\big( \CP(\lambda,\Omega), \CA(\lambda,\Omega), \CH(\lambda,\Omega)   \big)   (d,\BF,\lambda^{1\slash 2}\BG,\BG,K)$
is the unique solution of \eqref{eq:GR_CNS}.  Moreover, we have 
\begin{gather*}
\CR_{\CL\big(\CX_q(\Omega); H^{1}_q(\Omega) \big)}
 \Big( \Big\{ (\tau \pa_{\tau})^{\ell}\big( \lambda \CP(\lambda,\Omega)\big) :
  \lambda \in \Lambda_{\ep,\lambda_0} \Big\}\Big) \leq r_b,\\
\CR_{\CL\big(\CX_q(\Omega); H^{2-j}_q(\Omega)^N \big)}
 \Big( \Big\{ (\tau \pa_{\tau})^{\ell}\big( \lambda^{j\slash 2}\CA(\lambda,\Omega)\big) :
  \lambda \in \Lambda_{\ep,\lambda_0} \Big\}\Big) \leq r_b,\\
 \CR_{\CL\big(\CX_q(\Omega); W^{3-1\slash q-j'}_q(\Gamma_0) \big)}
 \Big( \Big\{ (\tau \pa_{\tau})^{\ell}\big( \lambda^{j'}\CH(\lambda,\Omega)\big) : 
 \lambda \in \Lambda_{\ep,\lambda_0} \Big\}\Big) \leq r_b,
\end{gather*}
for $\ell, j'=0,1,$ $j=0,1,2,$ and $\tau := \Im \lambda.$
Above the choices of $\lambda_0$ and $r_b$ depend solely on the parameters 
$\ep,$ $\sigma,$ $m,$ $\mu,$ $\nu,$ $q,$ $r,$ $N,$ $\rho_1,$ $\rho_2,$ $\rho_3$ and $\Omega.$
\end{theo}

In fact, we shall see that Theorem \ref{thm:GR_CNS} is a consequence of 
some generalized model problem. 
To this end, let us introduce some parameter $\zeta$ fulfilling $|\zeta|\leq \zeta_0$ and either of the following cases
\begin{equation*}
\hbox{(C1)}\,\, \zeta = \lambda^{-1};\quad
\hbox{(C2)}\,\,\zeta \in \Sigma_{\ep} \,\, \hbox{and} \,\,\Re \zeta <0;\quad 
\hbox{(C3)}\,\,\Re \zeta \geq 0. \hspace*{1cm}
\end{equation*}
Then we define
\begin{equation}\label{eq:Gamma}
\Gamma_{\ep,\lambda_0,\zeta} := 
\begin{cases}
\Lambda_{\ep,\lambda_0} & \hbox{for (C1)},\\
\big\{\lambda \in \BBC : \Re \lambda \geq \big| \frac{\Re \zeta}{\Im \zeta}\big| |\Im \lambda|, 
\,\, \Re \lambda \geq \lambda_0 \big\} & \hbox{for (C2)},\\
\{\lambda \in \BBC :  \Re \lambda \geq \lambda_0 \} & \hbox{for (C3)}.
\end{cases}
\end{equation}
For $\lambda \in \Gamma_{\ep, \lambda_0,\zeta},$  we consider the model problem 
\begin{equation}\label{eq:RR_CNS_0}
\left\{\begin{aligned}
& \lambda  \Bv -\gamma_{1}^{-1}\Di \big( \BBS(\Bv) + \zeta \gamma_3 \di \Bv \BBI \big)
= \Bf  &&\quad\hbox{in}\quad  \Omega,\\
& \big( \BBS(\Bv)  +\zeta \gamma_3 \di \Bv \BBI \big)\Bn_{\Gamma_{0}}
+\sigma (m-\Delta_{\Gamma_{0}})h \,\Bn_{\Gamma_{0}} = \Bg  &&\quad\hbox{on}\quad  \Gamma_0, \\
&\lambda  h - \Bv \cdot \Bn_{\Gamma_0}  = k  &&\quad\hbox{on}\quad  \Gamma_0,\\
&\Bv  = \0 &&\quad\hbox{on}\quad  \Gamma_1,\\
	\end{aligned}\right.
\end{equation} 
where $\gamma_1$ and $\gamma_3$ are uniformly continuous functions on $\overline{\Omega}$ such that 
\begin{gather}\label{hyp:gamma_GR_2}
0<\rho_1 \leq \gamma_1(x) \leq \rho_2,\quad 
0 <\gamma_3(x) \leq \rho_3, \,\,\, \forall \,\, x \in \overline{\Omega},\quad
 \|(\nabla \gamma_{1},\nabla \gamma_{3})\|_{L_r(\Omega)}  \leq \rho_3, 
\end{gather}
for some constants $\rho_1,\rho_2,\rho_3 >0$ and $N<r<\infty.$
The following result concerning \eqref{eq:RR_CNS_0} will be established later.
\begin{theo}\label{thm:RR_CNS}
Let $0<\ep<\pi\slash 2,$ $\sigma, \mu, \nu>0,$ $1<q<\infty,$ $N<r<\infty$ and 
$r \geq  q.$ Assume that $\Omega$ is of type $W^{3,2}_r,$ $m\geq \lambda_1(\ep,\Gamma_0)$ by Proposition \ref{prop:resolvent_LB}, 
and \eqref{hyp:gamma_GR_2} is satisfied.  Set that 
\begin{equation*}
Y_q(\Omega) := L_q(\Omega)^N \times H^{1}_q(\Omega)^N\times H^{2}_q(\Omega),\quad
\CY_q(\Omega) := L_q(\Omega)^N \times Y_q(\Omega).
\end{equation*}
For any $(\Bf,\Bg,k) \in Y_q(\Omega),$ there exist constants $\lambda_0, r_b \geq 1$ and operator families
\begin{align*}
\CA_0(\lambda,\Omega) & \in 
\Hol\Big( \Gamma_{\ep,\lambda_0,\zeta} ; \CL\big(\CY_q(\Omega);H^2_q(\Omega)^N \big) \Big),\\
\CH_0(\lambda,\Omega) & \in 
 \Hol\Big( \Gamma_{\ep,\lambda_0,\zeta} ; \CL\big(\CY_q(\Omega);H^3_q(\Omega) \big) \Big),
\end{align*}
such that $(\Bv, h):=\big(\CA_0(\lambda,\Omega),\CH_0(\lambda,\Omega) \big)(\Bf,\lambda^{1\slash 2}\Bg,\Bg,k)$
is a solutions of \eqref{eq:RR_CNS_0}. Moreover, we have 
\begin{gather*}
\CR_{\CL\big(\CY_q(\Omega); H^{2-j}_q(\Omega)^N \big)}
 \Big( \Big\{ (\tau \pa_{\tau})^{\ell}\big( \lambda^{j\slash 2}\CA_0(\lambda,\Omega)\big) :
  \lambda \in \Gamma_{\ep,\lambda_0,\zeta} \Big\}\Big) \leq r_b,\\
 \CR_{\CL\big(\CY_q(\Omega); H^{3-j'}_q(\Omega) \big)}
 \Big( \Big\{ (\tau \pa_{\tau})^{\ell}\big( \lambda^{j'}\CH_0(\lambda,\Omega)\big) : 
 \lambda \in \Gamma_{\ep,\lambda_0,\zeta} \Big\}\Big) \leq r_b,
\end{gather*}
for $\ell, j'=0,1,$ $j=0,1,2,$ and $\tau := \Im \lambda.$
Above the constants $\lambda_0$ and $r_b$ depend solely on
$\ep,$ $\sigma,$ $m,$ $\mu,$ $\nu,$ $\zeta_0,$ $q,$ $r,$ $N,$ $\rho_1,$ $\rho_2,$ $\rho_3$ and $\Omega.$
\end{theo}

By admitting Theorem \ref{thm:RR_CNS} for a while, it is not hard to prove Theorem \ref{thm:GR_CNS}.
Before that, let us recall a technical lemma from \cite{Shi2013}.
\begin{lemm}\label{lemma:ab_BH}
Let $1\leq q\leq r <\infty$ and $N<r<\infty.$ 
Suppose that $\Omega$ is a uniform $W^{2-1\slash r}_r$ domain.
 Then there exists a positive constant $C=C_{N,r,q,\Omega}$ such that 
\begin{equation*}
\|ab\|_{L_q(\Omega)} \leq \sigma_0 \|b\|_{H^1_q(\Omega)} 
+ C \sigma_0^{-\frac{N}{r-N}} \|a\|_{L_r(\Omega)}^{\frac{r}{r-N}} \|b\|_{L_q(\Omega)},
\end{equation*}
for any $\sigma_0>0,$ $a\in L_r(\Omega)$ and $b \in H^1_q(\Omega).$ In particular, one can replace $\|b\|_{H^1_q(\Omega)}$ by $\|\nabla b\|_{L_q(\Omega)}$ in the inequality above, whenever $\Omega$ is $\BBR^N$ or $\BBR^N_+.$
\end{lemm}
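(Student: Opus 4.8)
The plan is to deduce this product inequality from three classical ingredients applied in sequence: Hölder's inequality, a Gagliardo--Nirenberg--Sobolev interpolation estimate on $\Omega$, and Young's inequality with a carefully chosen pair of conjugate exponents. The exponent $r/(r-N)$ and the power $-N/(r-N)$ of $\sigma_0$ that appear in the statement will come out automatically from the Young step.

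First I would fix the intermediate exponent $s$ by $1/s := 1/q - 1/r$; this is legitimate since $1\le q\le r<\infty$ forces $0\le 1/s<1/q$, i.e. $q\le s<\infty$, and Hölder's inequality gives $\|ab\|_{L_q(\Omega)}\le \|a\|_{L_r(\Omega)}\|b\|_{L_s(\Omega)}$. The key arithmetic point, and the only place where $N<r$ enters, is that $s$ lies in the Sobolev range of $H^1_q$: if $q<N$ then $1/s=1/q-1/r>1/q-1/N$, so $q<s<q^\ast:=Nq/(N-q)$, while if $q\ge N$ there is no upper restriction. Hence the Gagliardo--Nirenberg inequality on $\Omega$ applies and yields a constant $C=C_{N,r,q,\Omega}$ with
\begin{equation*}
\|b\|_{L_s(\Omega)} \le C\,\|b\|_{H^1_q(\Omega)}^{\theta}\,\|b\|_{L_q(\Omega)}^{1-\theta},\qquad \theta=\frac{N}{r}\in(0,1),
\end{equation*}
the value $\theta=N/r$ being forced by the scaling relation $1/s=1/q-\theta/N$.

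Inserting this into the Hölder bound gives $\|ab\|_{L_q(\Omega)}\le C\|a\|_{L_r(\Omega)}\|b\|_{H^1_q(\Omega)}^{N/r}\|b\|_{L_q(\Omega)}^{1-N/r}$, and I would then apply Young's inequality with conjugate exponents $p=r/N$ and $p'=r/(r-N)$ to the factorisation $\big(\de\|b\|_{H^1_q(\Omega)}\big)^{N/r}\cdot\big(\de^{-N/r}\|a\|_{L_r(\Omega)}\|b\|_{L_q(\Omega)}^{1-N/r}\big)$. Using $(1-N/r)\cdot r/(r-N)=1$, this produces, for every $\de>0$,
\begin{equation*}
\|ab\|_{L_q(\Omega)} \le C\Big(\de\,\|b\|_{H^1_q(\Omega)} + \de^{-\frac{N}{r-N}}\,\|a\|_{L_r(\Omega)}^{\frac{r}{r-N}}\,\|b\|_{L_q(\Omega)}\Big).
\end{equation*}
Setting $\sigma_0:=C\de$ absorbs the constant in front of the first term and turns the second coefficient into $C^{r/(r-N)}\sigma_0^{-N/(r-N)}$, which, after relabelling the constant, is precisely the asserted inequality. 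For the final assertion, on $\Omega=\BBR^N$ or $\BBR^N_+$ the Gagliardo--Nirenberg inequality holds in the homogeneous (scale-invariant) form $\|b\|_{L_s}\le C\|\nabla b\|_{L_q}^{\theta}\|b\|_{L_q}^{1-\theta}$ without the zeroth-order term, so carrying this version through the argument replaces $\|b\|_{H^1_q}$ by $\|\nabla b\|_{L_q}$ everywhere.

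The one substantive point — everything else being elementary — is the validity of the Gagliardo--Nirenberg inequality on the possibly unbounded domain $\Omega$, with a constant uniform over the class of uniform $W^{2-1/r}_r$ domains. I would handle this through a uniform Sobolev extension theorem: a uniform $W^{2-1/r}_r$ domain satisfies a uniform Lipschitz (indeed $C^{1+\alpha}$, since $r>N$) boundary condition, hence admits a bounded extension operator $E\colon H^1_q(\Omega)\to H^1_q(\BBR^N)$ that is simultaneously bounded from $L_q(\Omega)$ to $L_q(\BBR^N)$, with operator norm depending only on the domain class; composing $E$ with the classical interpolation inequality on $\BBR^N$ yields the estimate used above with a constant of the form $C_{N,r,q,\Omega}$. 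I expect this uniform extension/interpolation step to be the only place requiring care, and it is standard for the domain class considered here.
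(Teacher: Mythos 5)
Your proposal is mathematically correct, and the route you take --- H\"older with $1/s = 1/q - 1/r$, the Gagliardo--Nirenberg interpolation $\|b\|_{L_s}\le C\|b\|_{H^1_q}^{N/r}\|b\|_{L_q}^{1-N/r}$ (valid precisely because $r>N$ keeps $s$ in the admissible range, including the endpoint $s=\infty$ when $q=r>N$), and then Young with exponents $r/N$ and $r/(r-N)$ --- is exactly how one reads off the exponents $-N/(r-N)$ and $r/(r-N)$. The replacement of $\|b\|_{H^1_q}$ by $\|\nabla b\|_{L_q}$ on $\BBR^N$ and $\BBR^N_+$ via the homogeneous form of Gagliardo--Nirenberg is also correct.

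One thing to flag about the comparison you were asked to make: this paper does not actually prove the lemma. It is stated as ``a technical lemma from \cite{Shi2013}'' (Shibata, \emph{J.\ Math.\ Fluid Mech.}, 2013), and the body of the paper simply invokes it. So there is no in-paper proof to compare against. That said, the argument in the cited source is the same standard chain you reconstructed (it is sometimes packaged slightly differently, passing through the fractional embedding $H^{N/r}_q(\Omega)\hookrightarrow L_{qr/(r-q)}(\Omega)$ followed by complex interpolation $H^{N/r}_q = [L_q,H^1_q]_{N/r}$ rather than a single Gagliardo--Nirenberg step, but these are equivalent). Your remark that the only substantive point is the validity of the interpolation inequality on a uniform $W^{2-1/r}_r$ domain is well placed: since $r>N$ such a domain is uniformly $C^{1,1-N/r}$, so a Stein-type extension operator with a constant depending only on the uniform domain parameters exists, and the whole-space inequality transports back; this is indeed the step that uses the hypotheses on $\Omega$ and on $r$.
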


\begin{proof}[The proof of Theorem \ref{thm:GR_CNS}]
By our assumptions on $\Omega,$ there exist linear mappings 
\begin{equation*}
\CR_{\Gamma_0}: H^{3}_q(\Omega) \rightarrow W^{3-1\slash q}_q(\Gamma_0)
\,\,\,\text{and}\,\,\,
\CE_{\Gamma_0}: W^{3-1\slash q}_q(\Gamma_0) \rightarrow H^{3}_q(\Omega),
\end{equation*}
such that $\|\CR_{\Gamma_0}f\|_{W^{n-1\slash q}_q(\Gamma_0)} \leq C_{q,\Omega}\|f\|_{H^n_q(\Omega)}$
and $\|\CE_{\Gamma_0}g\|_{H^{n}_q(\Omega)} \leq C_{q,\Omega}\|g\|_{W^{n-1\slash q}_q(\Gamma_0)}$ for $n=2,3.$
\smallbreak

Next, after eliminating $\eta$ via $\eqref{eq:GR_CNS}_1,$ $(\Bu,h)$ satisfies \eqref{eq:RR_CNS_0}
with $\gamma_3,$ $\Bf,$ $\Bg$ and $k$ given by 
\begin{equation*}
\gamma_3 := \gamma_1\gamma_2,\quad
\Bf:= \gamma_1^{-1} \BF - \gamma_1^{-1} \lambda^{-1}\nabla (\gamma_2  d), \quad
\Bg:= \BG + \lambda^{-1}\gamma_2 d \, \Bn_{\Gamma_0}, \quad 
k := \CR_{\Gamma_0}K.
\end{equation*}
Denote $\CS(\lambda)(d,\BF,\lambda^{1\slash 2}\BG,\BG,K):=(\Bf,\lambda^{1\slash 2} \Bg, \Bg,k)$ 
for any $(d,\BF,\BG,K) \in X_q(\Omega).$
Then Lemma \ref{lemma:ab_BH} yields for $r\geq q,$
\begin{equation}\label{es:R-bdd-S}
 \CR_{\CL\big(\CX_q(\Omega); \CY_q(\Omega) \big)}
 \Big( \Big\{ (\tau \pa_{\tau})^{\ell} \CS(\lambda) : 
 \lambda \in \Sigma_{\ep,1} \Big\}\Big) \leq C_{\rho_1,\rho_2,\rho_3,\Omega}.
\end{equation}
Then we define $\CA(\lambda,\Omega):= \CA_0(\lambda,\Omega) \circ \CS(\lambda),$ 
$\CH(\lambda,\Omega):= \CE_{\Gamma_0} \circ \CH_0(\lambda,\Omega) \circ \CS(\lambda)$ and
\begin{equation*}
\CP(\lambda,\Omega) (d,\BF,\lambda^{1\slash 2}\BG,\BG,K)
:= \lambda^{-1} \Big( d -\gamma_1 \di \big( \CA(\lambda,\Omega)(d,\BF,\lambda^{1\slash 2}\BG,\BG,K)\big) \Big),
\end{equation*}
which are the desired operators due to Theorem \ref{thm:RR_CNS} and \eqref{es:R-bdd-S}.
\smallbreak 

Finally, let us prove the uniqueness. Suppose that $\Bu \in H^2_q(\Omega)^N$ and $h \in W^{3-1\slash q}_q(\Gamma_0)$ satisfy \eqref{eq:GR_CNS} with $(d,\BF,\BG,k)$ vanishing.
For any $\lambda \in \Gamma_{\ep, \lambda_0,\zeta},$ $\BPhi \in C_0^{\infty}(\Omega)^N$ and $\phi \in C^{\infty}_0(\BBR^N),$
we consider that 
\begin{equation}\label{eq:RR_CNS_1}
\left\{\begin{aligned}
& \gamma_{1} \lambda  \Bv -\Di \big( \BBS(\Bv) + \zeta \gamma_3 \di \Bv \BBI \big)
= \BPhi  &&\quad\hbox{in}\quad  \Omega,\\
& \big( \BBS(\Bv)  +\zeta \gamma_3 \di \Bv \BBI \big)\Bn_{\Gamma_{0}}
+\sigma (m-\Delta_{\Gamma_{0}})\theta \,\Bn_{\Gamma_{0}} = \0  &&\quad\hbox{on}\quad  \Gamma_0, \\
&\lambda  \theta - \Bv \cdot \Bn_{\Gamma_0}  = \phi  &&\quad\hbox{on}\quad  \Gamma_0,\\
&\Bv  = \0 &&\quad\hbox{on}\quad  \Gamma_1.\\
	\end{aligned}\right.
\end{equation} 
 Then there exists a solution $(\Bv,\theta) \in H^2_{q'}(\Omega)^N \times W^{3-1\slash {q'}}_{q'}(\Gamma_0)$ of \eqref{eq:RR_CNS_1}. 
 Note the facts that 
 \begin{equation*}
 \sum_{i,j=1}^N \int_{\Omega} \big(\BBS(\Bu) + \zeta\gamma_3 \di \Bu \BBI \big)^i_j \pa_j v^i \,dx 
 = \sum_{i,j=1}^N \int_{\Omega} \big(\BBS(\Bv) + \zeta\gamma_3 \di \Bv \BBI \big)^i_j \pa_j u^i \,dx,
 \end{equation*}
 and $\big( (m-\Delta_{\Gamma_0})h,\theta \big)_{\Gamma_0}=\big( h,(m-\Delta_{\Gamma_0}) \theta \big)_{\Gamma_0}.$
Then it is not hard to see from \eqref{eq:GR_CNS} and \eqref{eq:RR_CNS_1} that
 \begin{equation*}
 (\Bu,\BPhi)_{\Omega} = \big( \sigma (m-\Delta_{\Gamma_0}) h , \phi \big)_{\Gamma_0},
\end{equation*}  
which yields that $\Bu|_{\Omega}=\0$ and $(m-\Delta_{\Gamma_0}) h|_{\Gamma_0}=0$ for the arbitrary choices of $\BPhi$ and $\phi.$ 
As $m-\Delta_{\Gamma_0}$ is invertible by Proposition \ref{prop:resolvent_LB}, $h|_{\Gamma_0}=0.$
This completes the proof of Theorem \ref{thm:GR_CNS}.
\end{proof}

\subsection{Generation of analytic semigroup}
\label{subsec:SG}
In this subsection, we study the following system with nonhomogeneous initial data within the semigroup framework,
\begin{equation}\label{eq:eta-Lame_0}
	\left\{\begin{aligned}
	&\pa_t \eta + \gamma_1 \di \Bu  = 0
	    &&\quad\hbox{in}\quad  \Omega \times \BBR_+, \\
&\gamma_1 \pa_t \Bu -\Di\big(\BBS(\Bu)-\gamma_2 \eta \BBI \big)= \0
		&&\quad\hbox{in}\quad  \Omega \times \BBR_+,\\
		&\big( \BBS(\Bu) - \gamma_2 \eta \BBI\big) \Bn_{\Gamma_0} 
+\sigma (m-\Delta_{\Gamma_0})h \,\Bn_{\Gamma_0} = \0
	    &&\quad\hbox{on}\quad  \Gamma_0 \times \BBR_+, \\
	    &\pa_t  h - \Bu \cdot \Bn_{\Gamma_0}  = 0
	    &&\quad\hbox{on}\quad  \Gamma_0 \times \BBR_+,\\
&\Bu  = \0 &&\quad\hbox{on}\quad  \Gamma_1 \times \BBR_+, \\
		&(\eta, \Bu,h)|_{t=0} = (\eta_0, \Bu_0,h_0) &&\quad\hbox{in}\quad  \Omega.
	\end{aligned}\right.
\end{equation}
Note that the boundary conditions in \eqref{eq:eta-Lame_0} are equivalent to 
\begin{equation}\label{cdt:compatible}
\big(\BBS(\Bu) \Bn_{\Gamma_0}\big)_{\tau}|_{\Gamma_0} = \0,\quad 
\BBS(\Bu)\Bn_{\Gamma_0} \cdot \Bn_{\Gamma_0} -\gamma_2\eta+ \sigma(m-\Delta_{\Gamma_0})h |_{\Gamma_0} =0,\quad
\Bu|_{\Gamma_1}=\0.
\end{equation}
Here $\Bf_{\tau}:=\Bf - (\Bf\cdot \Bn_{\Gamma_0}) \Bn_{\Gamma_0}$ stands for the tangential component of $\Bf$ along $\Gamma_0.$
Then we introduce the functional spaces
\begin{align*}
\FX_q(\Omega)&:=H^1_q(\Omega) \times L_q(\Omega)^{N}\times W^{2-1\slash q}_q(\Gamma_0),\\
\CD_q(\CA)&:=\Big\{ (\eta,\Bu,h) \in \FX_q(\Omega): \Bu \in H^2_q(\Omega)^N,\,\,h\in W^{3-1\slash q}_q(\Gamma_0), \,\,
\text{\eqref{cdt:compatible} holds}\Big\},
\end{align*}
endowed with the norms
\begin{align*}
\|(\eta,\Bu,h)\|_{\FX_q(\Omega)} &:= \|\eta\|_{H^1_q(\Omega)}+\|\Bu\|_{L_q(\Omega)}+\|h\|_{W^{2-1\slash q}_q(\Gamma_0)},\\
\|(\eta,\Bu,h)\|_{\CD_q(\CA)}& := \|\eta\|_{H^1_q(\Omega)}+\|\Bu\|_{H^2_q(\Omega)}+\|h\|_{ W^{3-1\slash q}_q(\Gamma_0)}.
\end{align*}
Furthermore, define the linear operator
\begin{equation*}
\CA \BU:=
\begin{bmatrix}
-\gamma_1 \di \Bu \\
\gamma_1^{-1}\Di\big(\BBS(\Bu)-\gamma_2 \eta \BBI \big) \\
\Bu \cdot \Bn_{\Gamma_0}
\end{bmatrix}
\,\,\,\text{for}\,\,\,\,  \BU:=(\eta,\Bu,h) \in \CD_q(\CA),
\end{equation*}
and the following functional space by the real interpolation theory,
\begin{equation*}
\CD_{q,p}(\Omega) := \big( \FX_q(\Omega), \CD_q(\CA) \big)_{1-1 \slash p,p} 
\subset H^1_q(\Omega) \times B^{2(1-1\slash p)}_{q,p}(\Omega) \times  B^{3-1\slash q-1\slash p}_{q,p}(\Gamma_0),
\end{equation*} 
with $\|(\eta,\Bu,h)\|_{\CD_{q,p}(\Omega)}:=\|\eta\|_{H^1_q(\Omega)} 
+\|\Bu\|_{B^{2(1-1\slash p)}_{q,p}(\Omega)} +\|h\|_{B^{3-1\slash q-1\slash p}_{q,p}(\Gamma_0)}.$
\smallbreak

Thanks to above settings, \eqref{eq:eta-Lame_0} can be regarded as the abstract Cauchy problem
\begin{equation*}
\pa_t \BU - \CA\BU = \0 \,\,\, \text{for}\,\,\, t>0, \quad \BU|_{t=0}=(\eta_0,\Bu_0,h_0),
\end{equation*}
whose resolvent problem is formulated as follows
\begin{equation*}
\lambda \BU - \CA\BU = \BF \,\,\, \text{for}\,\,\, \lambda \in \BBC \,\,\, \text{and} \,\,\, \BF=(d,\Bf,k) \in \FX_q(\Omega).
\end{equation*}
By Theorem \ref{thm:GR_CNS}, there exists $\lambda_0>0$ such that $\Lambda_{\ep,\lambda_0}$ is contained in the resolvent set $\rho(\CA)$ of $\CA.$ 
Moreover, we have
\begin{equation*}
|\lambda| \|\BU\|_{\FX_q(\Omega)} + \|\BU\|_{\CD_q(\CA)} \leq C r_b \|\BF\|_{\FX_q(\Omega)}  
\quad (\lambda \in \Lambda_{\ep,\lambda_0}),
\end{equation*}
for some constant $C>0.$ 
Then by the semigroup theory and interpolation arguments in \cite[Theorem 3.9]{ShiShi2008},
 we can furnish the following results. 
\begin{theo}\label{thm:semigroup}
Let $0<\ep<\pi\slash 2,$ $\sigma, \mu, \nu,\rho_1,\rho_2,\rho_3>0,$ $1<q,q':=q\slash (q-1)<\infty,$ $N<r<\infty$ and 
$r \geq  \max\{q,q'\}.$ Assume that $\Omega$ is of type $W^{3,2}_r,$ $m\geq \lambda_1(\ep,\Gamma_0)$ by Proposition \ref{prop:resolvent_LB}, and  \eqref{hyp:gamma_GR} is satisfied. 
Denote that $\BU_0 := (\eta_0, \Bu_0,h_0).$
Then there exist positive constants $\gamma_0,$ $C$ such that the following assertions hold true.
\begin{enumerate}
\item The operator $\CA$ generates a $C^0$ semigroup $\{T(t)\}_{t\geq 0}$ in $\FX_q(\Omega),$ which is analytic.  
Moreover, we have
\begin{equation*}
\|\BU\|_{\FX_q(\Omega)} + t\big( \|\pa_t  \BU\|_{\FX_q(\Omega)} +\| \BU\|_{\CD_q(\CA)}\big) 
\leq Ce^{\gamma_0 t} \|\BU_0\|_{\FX_q(\Omega)},
\end{equation*}
\begin{equation*}
 \|\pa_t  \BU\|_{\FX_q(\Omega)} +\| \BU\|_{\CD_q(\CA)}
\leq Ce^{\gamma_0 t} \|\BU_0\|_{\CD_q(\CA)},
\end{equation*}
with $\BU:=T(t) \BU_0.$

\item  For any $\BU_0 \in \CD_{q,p}(\Omega),$ \eqref{eq:eta-Lame_0} admits a unique solution
\begin{equation*}
e^{-\gamma_0 t}(\eta, \Bu,h) \in  H^1_{p} \big(\BBR_+;\FX_q(\Omega)\big) 
\cap L_{p} \big(\BBR_+;\CD_q(\CA)\big),
\end{equation*}
satisfying the estimates
\begin{equation*}
\|e^{-\gamma_0 t}\pa_t(\eta, \Bu,h)\|_{L_p(\BBR_+;\FX_q(\Omega))}
+\|e^{-\gamma_0 t}(\eta, \Bu,h)\|_{L_p(\BBR_+;\CD_q(\CA))}
\leq C \|(\eta_0, \Bu_0,h_0)\|_{\CD_{q,p}(\Omega)}.
\end{equation*}
\end{enumerate}
\end{theo}

\subsection{Maximal $L_p-L_q$ regularity}
In this subsection, we consider the following linear evolution equations with trivial initial data,
\begin{equation}\label{eq:eta-Lame_1}
	\left\{\begin{aligned}
	&\pa_t \eta + \gamma_1 \di \Bu  = d
	    &&\quad\hbox{in}\quad  \Omega \times \BBR_+, \\
&\gamma_1 \pa_t \Bu -\Di\big(\BBS(\Bu)-\gamma_2 \eta \BBI \big)= \Bf
		&&\quad\hbox{in}\quad  \Omega \times \BBR_+,\\
		&\big( \BBS(\Bu) - \gamma_2 \eta \BBI\big) \Bn_{\Gamma_0} 
+\sigma (m-\Delta_{\Gamma_0})h \,\Bn_{\Gamma_0} = \Bg
	    &&\quad\hbox{on}\quad  \Gamma_0 \times \BBR_+, \\
	    &\pa_t  h - \Bu \cdot \Bn_{\Gamma_0}  = k
	    &&\quad\hbox{on}\quad  \Gamma_0 \times \BBR_+,\\
&\Bu  = \0 &&\quad\hbox{on}\quad  \Gamma_1 \times \BBR_+, \\
		&(\eta, \Bu,h)|_{t=0} = (0, \0,0) &&\quad\hbox{in}\quad  \Omega.
	\end{aligned}\right.
\end{equation}
To describe the main result for \eqref{eq:eta-Lame_1}, we firstly introduce some useful notations.
For $\lambda =\gamma+ i\tau \in \BBC,$ the Laplace transform and its inverse are formulated by
\begin{equation*}
\CL [f] (\lambda):= \int_{\BBR} e^{-\lambda t} f(t) dt = \CF_t [e^{-\gamma t}f(t)](\tau), \quad 
\CL^{-1}[g] (t):= \frac{1}{2\pi}\int_{\BBR} e^{\lambda t} g(\tau) d\tau =e^{\gamma t}\CF^{-1}_{\tau} [g(\tau)](t).
\end{equation*}
For any $X$ valued function $f,$ we set $\Lambda^s_{\gamma} f (t):= \CL^{-1} \big[ \lambda^s \CL[f](\lambda) \big]$ for any $s>0.$
Then the Bessel potential spaces are defined as follows,
\begin{align*}
H^{s}_{p,\gamma}(\BBR;X) &:= \{f \in L_p(\BBR;X): e^{-\gamma t}  (\Lambda^s_\gamma f)(t) \in L_p(\BBR;X)\},\\
H^{s}_{p,\gamma,0}(\BBR;X) &:= \{f \in H^{s}_{p,\gamma}(\BBR;X): f(t)=0 \,\, \text{for}\,\,t<0\},
\end{align*}
for any $\gamma>0$ and $1<p<\infty.$ For any $(p,q) \in ]1,\infty[^2,$ we say $(d,\Bf,\Bg,k) \in \sF_{p,q,\gamma}$ ($\gamma>0$), 
if $d,$ $\Bf,$ $\Bg$ and $k$ fulfil that
\begin{gather*}
d\in L_{p,\gamma,0}\big(\BBR;H^1_q(\Omega)\big), \quad 
\Bf \in L_{p,\gamma,0}\big(\BBR;L_q(\Omega)^N\big),\\
\Bg \in  L_{p,\gamma,0}\big(\BBR;H^1_q(\Omega)^N\big) 
\cap H^{1\slash 2}_{p,\gamma,0}\big(\BBR;L_q(\Omega)^N\big),\quad 
k \in L_{p,\gamma,0}\big(\BBR;W^{2-1\slash q}_q(\Gamma_0)\big),
\end{gather*}
with the quantity
\begin{align*}
\|(d,\Bf,\Bg,k)\|_{\sF_{p,q,\gamma}}
:= &\|e^{-\gamma t}d\|_{L_p(\BBR;H^1_q(\Omega))}   
+  \|e^{-\gamma t}(\Bf,\Lambda^{1\slash 2}_{\gamma}\Bg) \|_{L_p(\BBR;L_q(\Omega))}   \\
&+  \|e^{-\gamma t}\Bg\|_{L_p(\BBR;H^1_q(\Omega))} 
+\|e^{-\gamma t}k\|_{L_p(\BBR;W^{2-1\slash q}_q(\Gamma_0))} <\infty.
\end{align*}
Thanks to Theorem \ref{thm:GR_CNS}, we can prove the following result.
\begin{theo}\label{thm:maximal}
Let $0<\ep<\pi\slash 2,$ $\sigma, \mu, \nu,\rho_1,\rho_2,\rho_3>0,$ $1<q,q':=q\slash (q-1)<\infty,$ $N<r<\infty$ and 
$r \geq  \max\{q,q'\}.$ Assume that $\Omega$ is of type $W^{3,2}_r,$ $m\geq \lambda_1(\ep,\Gamma_0)$ by Proposition \ref{prop:resolvent_LB}, and  \eqref{hyp:gamma_GR} is satisfied. 
Then there exist constants $\gamma_0,C>0$ such that the following assertions hold true.
For any $(d,\Bf,\Bg,k) \in \sF_{p,q,\gamma_0},$
 \eqref{eq:eta-Lame_1} admits a unique solution 
\begin{gather*}
\eta \in H^1_{p,\gamma_0,0}\big(\BBR;H^1_q(\Omega)\big),\quad 
\Bu \in L_{p,\gamma_0,0}\big(\BBR;H^2_q(\Omega)^N\big) \cap  H^1_{p,\gamma_0,0}\big(\BBR;L_q(\Omega)^N\big),\\
h \in L_{p,\gamma_0,0}\big(\BBR;W^{3-1\slash q}_q(\Gamma_0)\big) \cap  H^1_{p,\gamma_0,0}\big(\BBR;W^{2-1\slash q}_q(\Gamma_0)\big).
\end{gather*}
Moreover, we have
\begin{multline*}
\|e^{-\gamma_0 t}(\pa_t \eta, \eta)\|_{L_p(\BBR;H^1_q(\Omega))} 
+\|e^{-\gamma_0 t}(\pa_t \Bu,\Lambda_{\gamma_0}^{1\slash 2}\nabla \Bu)\|_{L_p(\BBR;L_q(\Omega))}
+\|e^{-\gamma_0 t}  \Bu \|_{L_p(\BBR;H^2_q(\Omega))} \\
+\|e^{-\gamma_0 t} \pa_t h \|_{L_p(\BBR;W^{2-1\slash q}_q(\Gamma_0))} 
+\|e^{-\gamma_0 t} h\|_{L_p(\BBR;W^{3-1\slash q}_q(\Gamma_0))} 
 \leq C\|(d,\Bf,\Bg,k)\|_{\sF_{p,q,\gamma_0}}.
\end{multline*}
\end{theo}

\begin{proof}
For simplicity, we denote the Laplace transform of $f$ by $\wh f := \CL[f](\lambda)$ and
 $\BZ_{\lambda}:= (d,\Bf, \Lambda^{1\slash 2}_{\gamma} \Bg, \Bg, k).$
Firstly, by applying the Laplace transform to \eqref{eq:eta-Lame_1}, we find 
\begin{equation}\label{eq:eta-Lame-2}
	\left\{\begin{aligned}
	&\lambda \wh \eta + \gamma_1 \di \wh\Bu  =\wh d
	    &&\quad\hbox{in}\quad  \Omega, \\
&\gamma_1 \lambda \wh\Bu -\Di\big(\BBS(\wh\Bu)-\gamma_2 \wh\eta \BBI \big)= \wh \Bf
		&&\quad\hbox{in}\quad  \Omega,\\
		&\big( \BBS(\wh\Bu) - \gamma_2 \wh\eta \BBI\big) \Bn_{\Gamma_0} 
+\sigma (m-\Delta_{\Gamma_0}) \wh h \,\Bn_{\Gamma_0} = \wh \Bg
	    &&\quad\hbox{on}\quad  \Gamma_0, \\
	    &\lambda \wh h -\wh \Bu \cdot \Bn_{\Gamma_0}  = \wh k
	    &&\quad\hbox{on}\quad  \Gamma_0,\\
&\wh \Bu  = \0 &&\quad\hbox{on}\quad  \Gamma_1.
	\end{aligned}\right.
\end{equation}
Then Theorem \ref{thm:GR_CNS} and Theorem \ref{thm:Weis} imply that  
\begin{equation*}
(\eta,\Bu,h):= \Big( \CL^{-1} \big[ \CP(\lambda,\Omega) \wh\BZ_{\lambda} \big],
 \CL^{-1} \big[ \CA(\lambda,\Omega) \wh\BZ_{\lambda} \big], \CL^{-1} \big[ \CP(\lambda,\Omega) \wh\BZ_{\lambda} \big] \Big).
\end{equation*}
is a solution of \eqref{eq:eta-Lame_1}. Moreover, there exists $\lambda \in \Lambda_{\ep,\lambda_0}$ with $\Re \lambda=\gamma_0\geq 1$ such that
\begin{multline*}
\|e^{-\gamma_0 t}\pa_t \eta\|_{L_p(\BBR;H^1_q(\Omega))} + \|e^{-\gamma_0 t}\pa_t \Bu\|_{L_p(\BBR;L_q(\Omega))}
+ \sum_{j=0,1,2} \|e^{-\gamma_0 t}\Lambda^{j\slash 2}_{\gamma_0} \Bu\|_{L_p(\BBR;H^{2-j}_q(\Omega))}\\
+ \|e^{-\gamma_0 t}\pa_t h \|_{L_p(\BBR;W^{2-1\slash q}_q(\Omega))} 
+ \|e^{-\gamma_0 t} h \|_{L_p(\BBR;W^{3-1\slash q}_q(\Gamma_0))} 
\leq C_p r_b \|(d,\Bf,\Bg,k)\|_{\sF_{p,q,\gamma_0}}.
\end{multline*}
\medskip

Next, in order to verify $(\eta,\Bu,h)(t)=(0,\0,0)$ for any $t< 0,$ we consider the following dual problem,
\begin{equation}\label{eq:eta-Lame-3}
	\left\{\begin{aligned}
	&\pa_t \rho - \gamma_1 \di \Bv  = 0
	    &&\quad\hbox{in}\quad  \Omega \times ]-\infty,T[, \\
&\gamma_1 \pa_t \Bv +\Di\big(\BBS(\Bv) - \gamma_2 \rho \BBI \big)= \0
		&&\quad\hbox{in}\quad  \Omega \times  ]-\infty,T[,\\
		&\big( \BBS(\Bv) - \gamma_2 \rho \BBI\big) \Bn_{\Gamma_0} 
+\sigma (m-\Delta_{\Gamma_0})\theta \,\Bn_{\Gamma_0} = \0
	    &&\quad\hbox{on}\quad  \Gamma_0 \times  ]-\infty,T[, \\
	    &\pa_t  \theta + \Bv \cdot \Bn_{\Gamma_0}  = 0
	    &&\quad\hbox{on}\quad  \Gamma_0 \times  ]-\infty,T[,\\
&\Bv  = \0 &&\quad\hbox{on}\quad  \Gamma_1 \times  ]-\infty,T[, \\
		&(\rho, \Bv,\theta)|_{t=T} = (\rho_0, \Bv_0, \theta_0) &&\quad\hbox{in}\quad  \Omega,
	\end{aligned}\right.
\end{equation}
with $T\in \BBR$ and $ (\rho_0, \Bv_0, \theta_0) \in C^{\infty}_0(\BBR^N)^{2+N}$ satisfying \eqref{cdt:compatible}.
As $(\wt\rho, \wt\Bv,\wt\theta)(t):=(\rho, \Bv,\theta)(T-t)$ satisfies \eqref{eq:eta-Lame_0},
we infer from Theorem \ref{thm:semigroup} that
\footnote{Here $\gamma_0$ in Theorem \ref{thm:maximal} can be chosen large enough such that the estimates in Theorem \ref{thm:semigroup} hold true. }
\begin{multline*}
\|e^{-\gamma_0 t}(\pa_t \rho,\rho) \|_{L_p(-\infty,T;H^1_q(\Omega))} + \|e^{-\gamma_0 t}\pa_t \Bv\|_{L_p(-\infty,T;L_q(\Omega))}
+ \|e^{-\gamma_0 t} \Bv\|_{L_p(-\infty,T;H^{2}_q(\Omega))}\\
+ \|e^{-\gamma_0 t}\pa_t \theta \|_{L_p(-\infty,T;W^{2-1\slash q}_q(\Gamma_0))} 
+ \|e^{-\gamma_0 t} \theta \|_{L_p(-\infty,T;W^{3-1\slash q}_q(\Gamma_0))} 
\leq C\|(\rho_0,\Bv_0,h_0)\|_{\CD_{q,p}(\Omega)}.
\end{multline*}
Note the fact that $m-\Delta_{\Gamma_0}$ is symmetric. 
Then by \eqref{eq:eta-Lame_1} and \eqref{eq:eta-Lame-3}, we have 
\begin{multline}\label{eq:dual-1}
\frac{d}{dt}\Big( (\gamma_1 \Bu,\Bv)_{\Omega} -(\gamma_2\gamma_1^{-1} \eta, \rho)_{\Omega}
 -\big( \sigma(m-\Delta_{\Gamma_0}) h, \theta\big)_{\Gamma_0} \Big)(t) 
 = -(\gamma_2\gamma_1^{-1}\rho,d)_{\Omega}(t) +(\Bv,\Bf)_{\Omega}(t)\\
 + (\Bv,\Bg)_{\Gamma_0}(t) - \big( \sigma(m-\Delta_{\Gamma_0}) \theta, k\big)_{\Gamma_0} (t),
\end{multline}
for any $t \leq T\in \BBR.$
Then integrating \eqref{eq:dual-1} on $]-\infty,T]$ yields that 
\begin{multline}\label{eq:dual-2}
\Big( (\gamma_1 \Bu,\Bv_0)_{\Omega} -(\gamma_2\gamma_1^{-1} \eta, \rho_0)_{\Omega}
  -\big( \sigma(m-\Delta_{\Gamma_0}) h, \theta_0\big)_{\Gamma_0} \Big)(T) \\
 =- \int_{-\infty}^T \Big( (\gamma_2\gamma_1^{-1}\rho,d)_{\Omega}-(\Bv,\Bf)_{\Omega}
 -(\Bv,\Bg)_{\Gamma_0} +\big( \sigma(m-\Delta_{\Gamma_0}) \theta, k\big)_{\Gamma_0} \Big) (t) \, dt.
\end{multline}
\medskip

If $d(t),\Bf(t),\Bg(t)$ and $k(t)$ vanish for $t<0,$ then \eqref{eq:dual-2} gives us   
$$ \big(\gamma_1 \Bu(T),\Bv_0\big)_{\Omega} -\big(\gamma_2\gamma_1^{-1} \eta(T), \rho_0\big)_{\Omega}
  -\big( \sigma(m-\Delta_{\Gamma_0}) h(T), \theta_0\big)_{\Gamma_0}  =0, \,\,\, \forall\,\,T<0. $$
As $\rho_0,\Bv_0$ and $\theta_0$ are arbitrary smooth functions, $\gamma_1,\gamma_2>0$ and $m \in \rho(\Delta_{\Gamma_0}),$ 
we have $(\eta,\Bu,h)(T)=(0,\0,0)$ for any $T< 0.$
 \smallbreak
  
At last, we prove the uniqueness of \eqref{eq:eta-Lame_1}. Suppose that $(\eta_1,\Bu_1, h_1)$ and $(\eta_2,\Bu_2, h_2)$ are two solutions of \eqref{eq:eta-Lame_1}. Then $(\eta,\Bu, h):=(\eta_2-\eta_1,\Bu_2-\Bu_1,h_2- h_1)$ satisfies \eqref{eq:eta-Lame_1}
by imposing $(d,\Bf,\Bg,k)=(0,\0,\0,0).$  Then \eqref{eq:dual-2} yields that 
  $(\eta,\Bu,h)(T)=(0,\0,0)$ for any $T>0.$
\end{proof}


\section{Generalized model problem in the half space}
\label{sec:halfspace}
In order to prove Theorem \ref{thm:RR_CNS}, we consider the following model problem in $\BBR^N_+$ in this section,  
\begin{equation}\label{eq:RR_CNS_half}
\left\{\begin{aligned}
& \lambda  \Bu -\gamma_{1}^{-1} \Di \big( \BBS(\Bu) + \zeta \gamma_3 \di \Bu \BBI \big)
= \BF &&\quad\hbox{in}\quad  \BBR^N_+,\\
&\big(\BBS(\Bu)  +\zeta \gamma_3 \di \Bu \BBI \big)\Bn_{0}
+\sigma (m-\Delta')h \,\Bn_{0} = \BG &&\quad\hbox{on}\quad  \BBR^N_0, \\
&\lambda  h - \Bu \cdot \Bn_{0}  = K &&\quad\hbox{on}\quad  \BBR^N_0,\\
	\end{aligned}\right.
\end{equation}
where $\BBR^N_0:= \{x=(x',x_{_N})\in \BBR^N: x_{_N}=0\},$ $\Bn_0 := (0,\dots,0,-1)^{\top}$ and $\Delta' := \sum_{j=1}^{N-1}\pa_j^2.$ 
Moreover, the parameter $\zeta$ and the constants $\gamma_1,$ $\gamma_3$ fulfil the conditions
\begin{equation}\label{hyp:half_space}
|\zeta| \leq \zeta_0, \,\,\,
0<\rho_1  \leq \gamma_1 \leq  \rho_2, \,\,\,
0 < \gamma_3 \leq \rho_3
\end{equation}
for some $\rho_1,\rho_2,\rho_3>0.$
Then recalling the definition of $\Gamma_{\ep,\lambda_0,\zeta}$ in \eqref{eq:Gamma}, our main result for \eqref{eq:RR_CNS_half} reads:
\begin{theo}\label{thm:GR_half_0}
Assume that $0<\ep<\pi\slash 2,$ $\sigma, m,\mu, \nu>0,$ $1<q<\infty$ and \eqref{hyp:half_space} is satisfied. Set that 
\begin{gather*}
Y_q(\BBR^N_+) := L_q(\BBR^N_+)^N \times H^{1}_q(\BBR^N_+)^N\times H^{2}_q(\BBR^N_+),\quad 
\CY_q(\BBR^N_+) := L_q(\BBR^N_+)^N \times Y_q(\BBR^N_+).
\end{gather*}
For any $(\BF,\BG,K) \in Y_q(\BBR^N_+),$ there exist constants $\lambda_0, r_b \geq 1$ and operator families
\begin{align*}
\CA_0(\lambda,\BBR^N_+) & \in 
\Hol\Big( \Gamma_{\ep,\lambda_0,\zeta} ; \CL\big(\CY_q(\BBR^N_+);H^2_q(\BBR^N_+)^N \big) \Big),\\
\CH_0(\lambda,\BBR^N_+) & \in 
 \Hol\Big( \Gamma_{\ep,\lambda_0,\zeta} ; \CL\big(\CY_q(\BBR^N_+);H^3_q(\BBR^N_+) \big) \Big),
\end{align*}
such that $(\Bu, h):=\big( \CA_0(\lambda,\BBR^N_+), \CH_0(\lambda,\BBR^N_+) \big) (\BF,\lambda^{1\slash 2}\BG,\BG,K)$
is a solution of \eqref{eq:RR_CNS_half}. Moreover, we have
\begin{gather*}
\CR_{\CL\big(\CY_q(\BBR^N_+); H^{2-j}_q(\BBR^N_+)^N \big)}
 \Big( \Big\{ (\tau \pa_{\tau})^{\ell}\big( \lambda^{j\slash 2}\CA_0(\lambda,\BBR^N_+)\big) : \lambda \in 
 \Gamma_{\ep,\lambda_0,\zeta} \Big\}\Big) \leq r_b,\\
 \CR_{\CL\big(\CY_q(\BBR^N_+); H^{3-j'}_q(\BBR^N_+) \big)}
 \Big( \Big\{ (\tau \pa_{\tau})^{\ell}\big( \lambda^{j'}\CH_0(\lambda,\BBR^N_+)\big) : \lambda \in 
\Gamma_{\ep,\lambda_0,\zeta} \Big\}\Big) \leq r_b,
\end{gather*}
for $\ell,j' =0,1,$ $j=0,1,2,$ and $\tau := \Im \lambda.$ Above the choices of $\lambda_0$ and $r_b$ depend solely on 
$\ep,$ $\sigma,$ $m,$ $\mu,$ $\nu,$ $q,$ $N,$ $\zeta_0,$ $\rho_1,$ $\rho_2,$ $\rho_3.$
\end{theo}

\subsection{Reduction and the main idea}
For convenience of the later calculations, we introduce the notations
\begin{equation*}
\alpha :=\gamma_1^{-1} \mu,\,\,\,
 \beta := \gamma_1^{-1}(\nu-\mu),\,\,\,
 \zeta':=  \gamma_1^{-1}\gamma_3 \zeta,\,\,\,
 \sigma':=\gamma_1^{-1}  \sigma,\,\,\,
 \BG':=\gamma_1^{-1}\BG.
\end{equation*}
Then \eqref{eq:RR_CNS_half} is equivalent to
\begin{equation}\label{eq:GR_half_0}
	\left\{\begin{aligned}
&\lambda \Bu - \alpha \Delta \Bu 
-(\alpha +\beta +\zeta') \nabla \di \Bu
 = \BF
		&&\quad\hbox{in}\quad  \BBR^N_+,\\
		&\alpha (\pa_{_N} u_j+ \pa_j u_{_N})= -G'_j, \,\, j=1,\dots, N-1,
	    &&\quad\hbox{on}\quad  \BBR^N_0, \\
		& 2\alpha \pa_{_N} u_{_N} +(\beta + \zeta') \di \Bu
+\sigma' (m-\Delta')h  =-G'_{N}
	    &&\quad\hbox{on}\quad  \BBR^N_0, \\
	    &\lambda  h + u_{_N}  = K
	    &&\quad\hbox{on}\quad  \BBR^N_0.\\
	\end{aligned}\right.
\end{equation}

Now let us outline the main idea of solving \eqref{eq:GR_half_0}.
Clearly $\Bu:=\Bv+\Bw$ and $h$ satisfy \eqref{eq:GR_half_0}, if $\Bv$ and $(\Bw,h)$ satisfy the following linear systems respectively,
\begin{equation}\label{eq:GR_half_1}
	\left\{\begin{aligned}
&\lambda \Bv - \alpha \Delta \Bv 
-(\alpha +\beta +\zeta') \nabla \di \Bv
 = \BF
		&&\quad\hbox{in}\quad  \BBR^N_+,\\
		&\alpha (\pa_{_N} v_j + \pa_j v_{_N})= -G'_j, \,\, j=1,\dots, N-1,
	    &&\quad\hbox{on}\quad  \BBR^N_0, \\
		& 2\alpha \pa_{_N} v_{_N} +(\beta + \zeta') \di \Bv  =-G'_{N}
	    &&\quad\hbox{on}\quad  \BBR^N_0, \\
	\end{aligned}\right.
\end{equation}
\begin{equation}\label{eq:GR_half_2}
	\left\{\begin{aligned}
&\lambda \Bw - \alpha \Delta \Bw 
-(\alpha +\beta +\zeta') \nabla \di \Bw = \0
		&&\quad\hbox{in}\quad  \BBR^N_+,\\
		&\alpha (\pa_{_N} w_j+ \pa_j w_{_N})= 0, \,\, j=1,\dots, N-1,
	    &&\quad\hbox{on}\quad  \BBR^N_0, \\
		& 2\alpha \pa_{_N} w_{_N} +(\beta + \zeta') \di \Bw
+\sigma' (m-\Delta')h  = 0
	    &&\quad\hbox{on}\quad  \BBR^N_0, \\
	    &\lambda  h + w_{_N}  = K-v_{_N}
	    &&\quad\hbox{on}\quad  \BBR^N_0.\\
	\end{aligned}\right.
\end{equation}
For the solvability of \eqref{eq:GR_half_1}, we refer to \cite[Theorem 2.3]{GS2014}.
\begin{prop}\label{prop:GR_half_1}
Assume that $0<\ep<\pi\slash 2,$ $\mu, \nu>0,$ $1<q<\infty$ and \eqref{hyp:half_space} is satisfied. 
\begin{equation*}
X_q(\BBR^N_+):= L_q(\BBR^N_+)^N \times H^1_q(\BBR^N_+)^N, \quad 
\CX_q(\BBR^N_+):= L_q(\BBR^N_+)^N \times X_q(\BBR^N_+).
\end{equation*}
For any $(\BF,\BG') \in X_q(\BBR^N_+),$ 
there exist constants $\lambda_0,r_b \geq 1$ and a family of operators
\begin{equation*}
\CA_1(\lambda, \BBR^N_+) \in 
\Hol\Big( \Gamma_{\ep,\lambda_0,\zeta} ; \CL\big(\CX_q(\BBR^N_+);H^2_q(\BBR^N_+)^N \big) \Big),
\end{equation*}
such that $\Bv := \CA_1(\lambda; \BBR^N_+)\, (\BF,\lambda^{1\slash2}\BG',\BG')$ solves \eqref{eq:GR_half_1}. 
Moreover, we have 
\begin{equation*}
\CR_{\CL\big(\CX_q(\BBR^N_+); H^{2-j}_q(\BBR^N_+)^N \big)}
 \Big( \Big\{ (\tau \pa_{\tau})^{\ell}\big( \lambda^{j\slash 2}\CA_1(\lambda,\BBR^N_+)\big) : \lambda \in 
\Gamma_{\ep,\lambda_0,\zeta} \Big\}\Big) \leq r_b,
\end{equation*}
for $\ell =0,1,$ $j=0,1,2,$ $\tau := \Im \lambda.$ Above the choices of $\lambda_0$ and $r_b$ depend solely on 
$\ep,$ $\mu,$ $\nu,$  $q,$ $N,$ $\zeta_0,$ $\rho_1,$ $\rho_2,$ $\rho_3.$
\end{prop}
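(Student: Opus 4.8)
The plan is to derive an explicit solution formula for \eqref{eq:GR_half_1} by partial Fourier transform in the horizontal variables $x'$, and then to read off the $\CR$-boundedness of the solution operators from the structure of the resulting multipliers, invoking the symbol-class machinery of Weis-type theory (i.e.\ the fact that a family of operators of the form $\CF_{\xi'}^{-1}[m(\lambda,\xi',x_N)\cdot]$ is $\CR$-bounded in $\CL(L_q)$ provided $m$ satisfies Mikhlin-type bounds uniformly in $\lambda\in\Gamma_{\ep,\lambda_0,\zeta}$, with the appropriate powers of $\lambda$ counted against powers of $\xi_N$ or $\pa_{x_N}$). Concretely: setting $A:=|\xi'|$ and applying $\CF_{x'}$ to the bulk equation turns \eqref{eq:GR_half_1} into a system of ODEs in $x_N>0$ with constant (in $x_N$) coefficients depending on $\lambda$, $\alpha$, $\beta+\zeta'$, and $\xi'$. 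The characteristic roots are $B:=\sqrt{\lambda/\alpha + A^2}$ (the ``slow'' root, multiplicity $N-1$ on the components governed by the Laplacian alone) and $B_\zeta:=\sqrt{\lambda/(2\alpha+\beta+\zeta') + A^2}$ (the ``fast'', compressible root coming from $\nabla\di$); one checks that for $\lambda\in\Gamma_{\ep,\lambda_0,\zeta}$ both $\Re B$ and $\Re B_\zeta$ are bounded below by $c(|\lambda|^{1/2}+A)$, which is exactly what makes the decaying exponential solutions $e^{-Bx_N}$, $e^{-B_\zeta x_N}$ legitimate and their multiplier symbols well-behaved. I would write $\hat\Bv=\hat\Bv_{\mathrm{part}}+\hat\Bv_{\mathrm{hom}}$, where the particular solution handles the bulk data $\BF$ (via the resolvent of the generalized Lamé operator on the whole line, extended by reflection, which is standard and $\CR$-bounded), and the homogeneous part $\hat\Bv_{\mathrm{hom}}=\Ba(\lambda,\xi')e^{-Bx_N}+\Bb(\lambda,\xi')e^{-B_\zeta x_N}$ (with $b_j=0$ for $j<N$ forced, and $\Bb$ essentially scalar $\times$ an $\xi'$-dependent vector from the divergence structure) is fixed by the two boundary conditions $\eqref{eq:GR_half_1}_2$, $\eqref{eq:GR_half_1}_3$.

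The first real step is therefore to solve the $2\times 2$ boundary-condition linear system for the unknown amplitudes: plugging $\hat\Bv_{\mathrm{hom}}$ into the tangential conditions $\alpha(\pa_N v_j+\pa_j v_N)=-\hat G'_j$ and the normal condition $2\alpha\pa_N v_N+(\beta+\zeta')\di\Bv=-\hat G'_N$ at $x_N=0$ produces a matrix whose determinant is the ``Lopatinskii determinant'' $L(\lambda,A)$ for this Lamé-type boundary value problem. The crucial analytic point is the uniform lower bound
\[
|L(\lambda,A)|\ge c\,(|\lambda|^{1/2}+A)^{k}\quad\text{for }\lambda\in\Gamma_{\ep,\lambda_0,\zeta},\ A\ge 0,
\]
for the appropriate homogeneity degree $k$ and a constant $c$ depending only on $\ep,\mu,\nu,\zeta_0,\rho_1,\rho_2,\rho_3$; this is where the restriction to the sector $\Gamma_{\ep,\lambda_0,\zeta}$ (through the three cases (C1)--(C3) on $\zeta$) and the largeness of $\lambda_0$ really enter. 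I expect this determinant estimate to be the main obstacle, because $L$ mixes $B$ and $B_\zeta$ and one must rule out zeros for all admissible $\zeta$ simultaneously; the standard device is to (a) check the leading behavior as $A\to\infty$ with $\lambda$ fixed and as $|\lambda|\to\infty$ with $A$ fixed by a homogeneity/compactness argument on the compact ``angular'' variable, and (b) exploit that for $\Re\zeta\ge 0$ or $\zeta=\lambda^{-1}$ the relevant coefficient $2\alpha+\beta+\zeta'$ stays in a fixed sector bounded away from the negative reals, so $B_\zeta$ behaves like a genuine parabolic root. (This is precisely the type of computation carried out in \cite{GS2014}, whose Theorem~2.3 is being quoted; I would follow that template, adapting the coefficient $\beta$ to $\beta+\zeta'$ and tracking the $\zeta$-dependence explicitly so that $\lambda_0,r_b$ come out depending only on $\zeta_0$ and not on $\zeta$ itself.)

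Once $L(\lambda,A)^{-1}$ and hence $\Ba,\Bb$ are under control, the remaining work is bookkeeping: each entry of the solution operator $\CA_1(\lambda,\BBR^N_+)$ is, after inverse partial Fourier transform, an operator of the form $\CF_{\xi'}^{-1}[M(\lambda,\xi')\,e^{-B(\cdot)x_N}\,\CF_{x'}\cdot]$ (or with $B_\zeta$, or a Volterra-type kernel in $x_N$ for the particular solution), where $M$ is a product of rational expressions in $B,B_\zeta,A,\lambda^{1/2}$ and $L^{-1}$. One verifies that $(\tau\pa_\tau)^\ell(\lambda^{j/2}M)$ together with the corresponding $x_N$-derivatives (up to order $2$ on $\Bv$, since we need $H^2_q$) obey the Mikhlin bounds $|(\xi'\cdot\nabla_{\xi'})^{|\gamma'|}(\cdots)|\le C\,A^{-|\gamma'|}$ uniformly on $\Gamma_{\ep,\lambda_0,\zeta}$ — using the elementary estimates $|\pa_{\xi'}^\beta B|\lesssim (|\lambda|^{1/2}+A)^{1-|\beta|}$, $|e^{-Bx_N}|\le e^{-c(|\lambda|^{1/2}+A)x_N}$, and $\int_0^\infty (|\lambda|^{1/2}+A)^a x_N^b e^{-c(|\lambda|^{1/2}+A)x_N}\,dx_N\lesssim (|\lambda|^{1/2}+A)^{a-b-1}$ to balance the derivative count. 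Combining these with the $\CR$-boundedness transference lemma (the ``if $m$ has such bounds then the family is $\CR$-bounded in $\CL(L_q)$'' statement, e.g.\ via Remark~\ref{rmk:R-bounded}(4) together with the vector-valued Mikhlin theorem underlying Theorem~\ref{thm:Weis}) yields the claimed $\CR$-bounds for $(\tau\pa_\tau)^\ell(\lambda^{j/2}\CA_1(\lambda,\BBR^N_+))$ with $\ell=0,1$, $j=0,1,2$, and holomorphy in $\lambda$ is immediate since all the symbols are holomorphic in $\lambda$ on $\Gamma_{\ep,\lambda_0,\zeta}$ away from the (excluded) zeros of $L$.
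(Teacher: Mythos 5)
The paper does not prove Proposition~\ref{prop:GR_half_1} at all; it is quoted verbatim from Götz--Shibata, \cite[Theorem~2.3]{GS2014}, and the text simply says ``for the solvability of \eqref{eq:GR_half_1}, we refer to [GS2014, Theorem~2.3].'' Your sketch is a reasonable reconstruction of the approach actually used in that reference (partial Fourier transform in $x'$, decomposition into a whole-space particular solution plus decaying exponentials $e^{-Bx_N}$, $e^{-Ax_N}$ with $B=\sqrt{\alpha^{-1}\lambda+|\xi'|^2}$ and $A=\sqrt{(2\alpha+\beta+\zeta)^{-1}\lambda+|\xi'|^2}$, Lopatinski determinant lower bound uniform on $\Gamma_{\ep,\lambda_0,\zeta}$, then Mikhlin-type bounds to obtain $\CR$-boundedness), and you correctly identify the determinant estimate and the $\zeta$-uniformity through cases (C1)--(C3) as the crux. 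One small imprecision: you write that ``$b_j=0$ for $j<N$ forced,'' but the compressible (potential) homogeneous mode is $\hat\Bv=c\,(i\xi',-A)^\top e^{-Ax_N}$, so its tangential components are proportional to $i\xi_j$ and are not zero — your subsequent clause (``$\Bb$ essentially scalar $\times$ an $\xi'$-dependent vector'') is the accurate description, and the $2\times 2$ reduction to the Lopatinski matrix $\BBL$ then comes from projecting the tangential boundary conditions onto the $\xi'$-direction.
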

\medskip

Hereafter, we replace $(\sigma',\zeta')$ by $(\sigma,\zeta)$ for simplicity
\footnote{Thanks to \eqref{hyp:half_space}, the new definition of $\zeta$ is harmless to the shape of $\Gamma_{\ep,\lambda_0,\zeta}$ in \eqref{eq:Gamma}.}. 
To handle \eqref{eq:GR_half_2},  it suffices to consider
\begin{equation}\label{eq:GR_half_3}
	\left\{\begin{aligned}
&\lambda \Bu - \alpha \Delta \Bu 
-(\alpha +\beta +\zeta) \nabla \di \Bu = \0
		&&\quad\hbox{in}\quad  \BBR^N_+,\\
		&\alpha (\pa_{_N} u_j+ \pa_j u_{_N})= 0, \,\, j=1,\dots, N-1,
	    &&\quad\hbox{on}\quad  \BBR^N_0, \\
		& 2\alpha \pa_{_N} u_{_N} +(\beta + \zeta) \di \Bu
+\sigma (m-\Delta')h  = 0
	    &&\quad\hbox{on}\quad  \BBR^N_0, \\
	    &\lambda  h + u_{_N}  = k
	    &&\quad\hbox{on}\quad  \BBR^N_0.\\
	\end{aligned}\right.
\end{equation}
For \eqref{eq:GR_half_3}, we will establish that:
\begin{prop}\label{prop:GR_half_2}
Assume that $0<\ep<\pi\slash 2,$ $\sigma, m,\mu, \nu>0,$ $1<q<\infty$ and \eqref{hyp:half_space} is satisfied. 
For any $k \in H^2_q(\BBR^N_+),$
there exist constants $\lambda_0,r_b \geq 1$ and the operator families 
\begin{align*}
\CW(\lambda,\BBR^N_+) \in & 
\Hol\Big( \Gamma_{\ep,\lambda_0,\zeta} ; \CL\big(H^2_q(\BBR^N_+);H^2_q(\BBR^N_+)^N \big) \Big),\\
\CH(\lambda,\BBR^N_+) \in &
 \Hol\Big( \Gamma_{\ep,\lambda_0,\zeta} ; \CL\big(H^2_q(\BBR^N_+);H^3_q(\BBR^N_+) \big) \Big),
\end{align*}
such that $(\Bu,h):=\big( \CW(\lambda,\BBR^N_+),\CH(\lambda,\BBR^N_+) \big)k$ is a solution of \eqref{eq:GR_half_3}.
Moreover, we have
\begin{align*}
\CR_{\CL\big(H^2_q(\BBR^N_+); H^{2-j}_q(\BBR^N_+)^N \big)}
 \Big( \Big\{ (\tau \pa_{\tau})^{\ell}\big( \lambda^{j\slash 2}\CW(\lambda,\BBR^N_+)\big) : \lambda \in 
\Gamma_{\ep,\lambda_0,\zeta} \Big\}\Big) \leq r_b,\\
 \CR_{\CL\big(H^2_q(\BBR^N_+); H^{3-j'}_q(\BBR^N_+) \big)}
 \Big( \Big\{ (\tau \pa_{\tau})^{\ell}\big( \lambda^{j'}\CH(\lambda,\BBR^N_+)\big) : \lambda \in 
\Gamma_{\ep,\lambda_0,\zeta} \Big\}\Big) \leq r_b,
\end{align*}
for $\ell,j' =0,1,$ $j=0,1,2,$ $\tau := \Im \lambda.$
Above the choices of $\lambda_0$ and $r_b$ depend solely on 
$\ep,$ $\sigma,$ $m,$ $\mu,$ $\nu,$ $q,$ $N,$ $\zeta_0,$ $\rho_1,$ $\rho_2,$ $\rho_3.$
\end{prop}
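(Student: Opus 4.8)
\smallskip
\noindent\textbf{Proof strategy for Proposition \ref{prop:GR_half_2}.}
The plan is to produce an explicit solution formula by the partial Fourier transform $\CF_{x'}$ in the tangential variables, to identify and estimate the associated Lopatinskii determinant, and finally to recast the solution operators as Fourier multiplier operators on $\BBR^N_+$ to which the standard technical $\CR$-boundedness lemmas apply. First, writing $\wh{\,\cdot\,}=\CF_{x'}[\,\cdot\,](\xi',x_N)$ and applying $\CF_{x'}$ to \eqref{eq:GR_half_3}, the interior system turns into a constant-coefficient ODE in $x_N>0$; taking its divergence shows that $\wh\theta:=\widehat{\di\Bu}$ solves $\big(\lambda+(2\alpha+\beta+\zeta)(|\xi'|^2-\pa_N^2)\big)\wh\theta=0$, whereas $\big(\lambda+\alpha(|\xi'|^2-\pa_N^2)\big)\wh u_j=(\alpha+\beta+\zeta)(i\xi_j)\wh\theta$ for $j<N$ and $=-(\alpha+\beta+\zeta)\pa_N\wh\theta$ for $j=N$. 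Introducing
\begin{equation*}
A:=\sqrt{|\xi'|^2+\lambda\slash\alpha},\qquad B:=\sqrt{|\xi'|^2+\lambda\slash(2\alpha+\beta+\zeta)},
\end{equation*}
with $\Re A,\Re B>0$ — which is legitimate for $\lambda\in\Gamma_{\ep,\lambda_0,\zeta}$ once $\lambda_0$ is large, thanks to the shape of $\Gamma_{\ep,\lambda_0,\zeta}$ in \eqref{eq:Gamma} and to \eqref{hyp:half_space} — the $L_q$-bounded solutions are spanned by $e^{-Ax_N}$ and $e^{-Bx_N}$; imposing the constraint $\widehat{\di\Bu}=\wh\theta$ (which is not automatic and kills one degree of freedom), $\wh\Bu(\xi',\cdot)$ is then described by $N$ scalar coefficient functions of $(\xi',\lambda)$ and $\wh h(\xi',0)$ by one more.

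Next, I would substitute this ansatz into the three boundary relations in \eqref{eq:GR_half_3} and eliminate $h$ by the kinematic one $\eqref{eq:GR_half_3}_4$, which on the Fourier side inserts the factor $\sigma(m+|\xi'|^2)\slash\lambda$ into the normal-stress condition; this produces a square linear system $M(\xi',\lambda)\Bc=\Br(\xi',\lambda)$ whose right-hand side is linear in $\wh k|_{x_N=0}$. Its determinant $L(\xi',\lambda):=\det M(\xi',\lambda)$ is the Lopatinskii determinant of the problem: schematically it equals the classical free-boundary Lam\'e determinant $L_0(\xi',\lambda)$ of the $\sigma=0$ case (as in \cite{GS2014,EvBS2014}) plus a contribution proportional to $\sigma(m+|\xi'|^2)(A-B)\slash\lambda$ coming from the surface equation. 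By Cramer's rule one obtains explicit representations
\begin{equation*}
\wh u_j(\xi',x_N)=\sum_{\sharp\in\{A,B\}}\frac{P^{\sharp}_j(\xi',\lambda)}{L(\xi',\lambda)}\,e^{-\sharp x_N}\,\wh k\big|_{x_N=0},\qquad
\wh h(\xi',0)=\frac{P_h(\xi',\lambda)}{\lambda\,L(\xi',\lambda)}\,\wh k\big|_{x_N=0},
\end{equation*}
with $P^{\sharp}_j,P_h$ polynomial in $(\xi',\lambda,A,B)$; the function $h$ is then extended into $\BBR^N_+$ by attaching a decaying exponential such as $e^{-Bx_N}$.

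The heart of the matter — and the step I expect to be the main obstacle — is to prove the non-vanishing of $L$ together with a quantitative lower bound on $\Gamma_{\ep,\lambda_0,\zeta}$, schematically $|L(\xi',\lambda)|\gtrsim(|\xi'|+|\lambda|^{1\slash2})^{a_1}|A|^{a_2}|B|^{a_3}$ with exponents calibrated so as to dominate all the numerators above once the traces are rewritten, uniformly for $|\zeta|\leq\zeta_0$ and $\lambda_0$ large. For $\sigma=0$ this is precisely the Lam\'e-determinant estimate of \cite{GS2014,EvBS2014}; the new difficulty is that the surface-tension correction carries the a priori unfavourable factor $\lambda^{-1}$ multiplied by $m+|\xi'|^2$, so one must multiply through by $\lambda$ and show that this term has a definite sign relative to $L_0$ (indeed, that it is helpful for both small and large $|\xi'|$). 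This is where one uses $\Re A,\Re B\gtrsim|\xi'|+|\lambda|^{1\slash2}$, a sign of $\Re(A-B)$ valid on $\Gamma_{\ep,\lambda_0,\zeta}$, and the constraints on $\zeta$ built into \eqref{hyp:half_space} and into the very definition \eqref{eq:Gamma} of $\Gamma_{\ep,\lambda_0,\zeta}$ in each of the cases (C1)--(C3). Granting the determinant estimate, a routine application of Leibniz' rule with the elementary bounds $|(\tau\pa_\tau)^{\ell}A|\lesssim|A|$, $|(\tau\pa_\tau)^{\ell}B|\lesssim|B|$ and $|(\tau\pa_\tau)^{\ell}\lambda^{1\slash2}|\lesssim|\lambda|^{1\slash2}$ shows that the symbols arising from $\lambda^{j\slash2}P^{\sharp}_j\slash L$ and $\lambda^{j'}P_h\slash(\lambda L)$ — multiplied by the powers of $\sharp$ or $|\xi'|$ created by differentiating in $x_N$ or $x'$ — satisfy, uniformly in $\zeta$, the Mikhlin-type bounds $|(\tau\pa_\tau)^{\ell}\pa_{\xi'}^{\kappa}m(\xi',\lambda)|\leq C_\kappa(|\xi'|+|\lambda|^{1\slash2})^{-|\kappa|}$ for $\ell=0,1$, i.e.\ they lie in the multiplier classes governing the half-space $\CR$-boundedness lemmas.

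Finally, since the datum $k$ is prescribed as an element of $H^2_q(\BBR^N_+)$ rather than as a function on $\BBR^N_0$, I would rewrite every trace $\wh k|_{x_N=0}$ by the Volevich trick, $\wh k(\xi',0)=-\int_0^\infty\pa_{y_N}\big(e^{-\sharp y_N}\wh k(\xi',y_N)\big)\,dy_N$, converting it into an integral over $y_N>0$ of (symbol)$\,\times\,$(a spatial derivative of $k$ of order at most two), at the cost of absorbing one or two further powers of $A,B$ or $|\xi'|$ into $L$ — which is exactly what the homogeneity fixed in the previous step provides, and is why the hypothesis $k\in H^2_q(\BBR^N_+)$ (rather than merely a trace in $W^{2-1\slash q}_q(\BBR^N_0)$) is the natural one. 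The operators $\CW(\lambda,\BBR^N_+)$ and $\CH(\lambda,\BBR^N_+)$ then become finite sums of operators of the model type $f\mapsto\int_0^\infty\CF_{\xi'}^{-1}\big[m(\xi',\lambda)\,e^{-\sharp_1 x_N-\sharp_2 y_N}\,\wh g(\xi',y_N)\big]\,dy_N$ with $\sharp_1,\sharp_2\in\{A,B\}$, $g$ a derivative of $k$ of order at most two, and $m$ in the classes above; by the standard technical lemmas on the $\CR$-boundedness of such operators on $L_q(\BBR^N_+)$ (which underpin \cite{GS2014,EvBS2014} and go back to \cite{ShiShi2008}), together with Remark \ref{rmk:R-bounded}, one obtains the claimed $\CR$-bounds for $(\tau\pa_\tau)^{\ell}(\lambda^{j\slash2}\CW)$ and $(\tau\pa_\tau)^{\ell}(\lambda^{j'}\CH)$ on $\Gamma_{\ep,\lambda_0,\zeta}$ with $\ell,j'=0,1$ and $j=0,1,2$. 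Holomorphy in $\lambda$ follows from the explicit formulas once $L\neq0$ is known, the identity that $(\CW,\CH)k$ solves \eqref{eq:GR_half_3} holds by construction, and the dependence of $\lambda_0$ and $r_b$ on $\ep,\sigma,m,\mu,\nu,q,N,\zeta_0,\rho_1,\rho_2,\rho_3$ is read off from the determinant estimate.
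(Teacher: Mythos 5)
Your proposal follows the same route as the paper: partial Fourier transform in $x'$, explicit solution formulas in terms of the two exponentials, the Lopatinskii determinant (the paper's $N(A,B)=\lambda\det\BBL+\sigma L_{11}(m+|\xi'|^2)$, i.e.\ $\lambda$ times your reduced determinant $L$), a quantitative lower bound on this determinant uniformly over $\Gamma_{\ep,\lambda_0,\zeta}$ in all cases (C1)--(C3), and then the Volevich trick plus the standard $\CR$-boundedness lemmas for half-space Fourier integral operators. You correctly single out the determinant lower bound (the paper's Lemma \ref{lemma:N_AB}) as the genuinely new piece of work relative to \cite{GS2014,EvBS2014}.

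One concrete detail in your sketch would not close and needs to be fixed. You propose to extend $h$ from the boundary into $\BBR^N_+$ by attaching a decaying exponential such as $e^{-Bx_N}$, and then to recover the gain of one spatial derivative ($H^2_q\to H^{3-j'}_q$) from the homogeneity of the determinant. But the determinant lower bound supplies an extra factor $(|\lambda|+|\xi'|)^{-1}$ over the crude scaling $(|\lambda|^{1/2}+|\xi'|)^{-2}$, and this gain is wasted if each $\pa_{x_N}$ falling on the extension produces a full $B\sim|\lambda|^{1/2}+|\xi'|$. Concretely, in the regime $|\xi'|\to0$, $|\lambda|\to\infty$, the top-order $x_N$-derivative of $e^{-Bx_N}$ gives a factor $|\lambda|^{3/2}$ against only $|\lambda|^{-1}$ from $\det\BBL/N(A,B)$, leaving an unbounded $|\lambda|^{1/2}$. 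The paper instead extends by $\varphi(x_N)e^{-|\xi'|x_N}$ with a compactly supported cut-off $\varphi$, so that $\pa_N$-derivatives produce at most $|\xi'|$ (plus harmless derivatives of $\varphi$), and moves tangential derivatives onto $k$ via the identity $1=\frac{1}{1+|\xi'|^2}-\sum_\ell\frac{i\xi_\ell}{1+|\xi'|^2}i\xi_\ell$; the resulting symbols then belong to the class $\BM_{0,2}$ to which Lemma \ref{lemma:basic_es_2} applies. Your treatment of $\CW$ (where the natural symbols already lie in $\BM_{-2,1}$ and Lemma \ref{lemma:basic_es} applies directly) is fine, but the $\CH$ estimate requires this specific device; ``absorbing one or two further powers of $B$'' into the determinant does not by itself carry the argument through.

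Beyond that, a cosmetic remark: your schematic lower bound $|L|\gtrsim(|\xi'|+|\lambda|^{1/2})^{a_1}|A|^{a_2}|B|^{a_3}$ should be sharpened to explicitly record the $(|\lambda|+|\xi'|)$ factor, since otherwise the derivative gain for $h$ cannot even be stated; this is precisely the shape of the bound proved in Lemma \ref{lemma:N_AB}.
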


Now let us point out that Theorem \ref{thm:GR_half_0} is an immediate consequence from Proposition \ref{prop:GR_half_1} and Proposition \ref{prop:GR_half_2} above.
\begin{proof}[Completion of the proof of Theorem \ref{thm:GR_half_0}]
Set that $v_{_N} :=\CA_{1N}(\lambda; \BBR^N_+)\, (\BF,\lambda^{1\slash2}\BG',\BG')$ due to Proposition \ref{prop:GR_half_1} for $(\BF, \BG') \in X_q(\BBR^N_+).$ Then thanks to \eqref{eq:GR_half_2} and Proposition \ref{prop:GR_half_2}, introduce
\begin{align*}
\Bw :=& \CA_2(\lambda,\BBR^N_+) (\BF,\lambda^{1\slash 2}\BG',\BG',K)
:=\CW(\lambda,\BBR^N_+) \big( K - \CA_{1N}(\lambda; \BBR^N_+)\, (\BF,\lambda^{1\slash2}\BG',\BG') \big),\\
h :=&\CH_0(\lambda,\BBR^N_+)  (\BF,\lambda^{1\slash 2}\BG',\BG',K)
:= \CH(\lambda,\BBR^N_+)  \big( K 
- \CA_{1N}(\lambda; \BBR^N_+)\, (\BF,\lambda^{1\slash2}\BG',\BG')\big),
\end{align*}
and $(\Bw, h)$ solves \eqref{eq:GR_half_2} clearly.
Finally, we define
\begin{align*}
\Bu&:= \CA_0 (\lambda, \BBR^N_+) (\BF,\lambda^{1\slash 2}\BG',\BG', K) 
:=\CA_1(\lambda; \BBR^N_+)\, (\BF,\lambda^{1\slash2}\BG',\BG') 
+ \CA_2(\lambda,\BBR^N_+) (\BF,\lambda^{1\slash 2}\BG',\BG',K).
\end{align*}
Then thanks to Proposition \ref{prop:GR_half_1}, Proposition \ref{prop:GR_half_2} and Remark \ref{rmk:R-bounded}, $\CA_0(\lambda,\BBR^N_+)$ and $\CH_0(\lambda,\BBR^N_+)$ are the desired operator families.
\end{proof}

 To tackle \eqref{eq:GR_half_3},  our main task is  to construct $\CW(\lambda,\BBR^N_+)$ and $\CH(\lambda,\BBR^N_+)$ in Proposition \ref{prop:GR_half_2}.
To this end, we apply the partial fourier transformation $\CF_{x'}$ to \eqref{eq:GR_half_3},
\begin{equation}\label{eq:GR_half_4}
	\left\{\begin{aligned}
&(\lambda + \alpha |\xi'|^2) \wh{u_j} - \alpha \pa_{N}^2 \wh{u_j} 
-(\alpha +\beta +\zeta) (i\xi_j) 
(i\xi'\cdot \wh{\Bu'} + \pa_{_N} \wh{u_{_N}}) = 0
		&&\quad\hbox{for}\quad  x_{_N}>0,\\
&(\lambda + \alpha |\xi'|^2) \wh{u_{_N}} - \alpha \pa_{N}^2 \wh{u_{_N}} 
-(\alpha +\beta +\zeta) \pa_{_N} (i\xi'\cdot \wh{\Bu'} 
+ \pa_{_N} \wh{u_{_N}}) = 0
		&&\quad\hbox{for}\quad  x_{_N}>0,\\
		&\alpha (\pa_{_N} \wh{u_j} + i\xi_j \wh{u_{_N}})= 0, \,\, j=1,\dots, N-1,
	    &&\quad\hbox{for}\quad  x_{_N}=0, \\
		& 2\alpha \pa_{_N} \wh{u_{_N}} +(\beta + \zeta) 
		 (i\xi'\cdot \wh{\Bu'}+\pa_{_N} \wh{u_{_N}})
+\sigma (m+|\xi'|^2) \wh{h}  = 0
	    &&\quad\hbox{for}\quad  x_{_N}=0, \\
	    &\lambda  \wh{h} + \wh{u_{_N}}  =\wh{k}
	    &&\quad\hbox{for}\quad  x_{_N}=0.
	\end{aligned}\right.
\end{equation}
For convenience, we take advantage of the following notations,
\begin{gather}\label{eq:AB}
A:= \sqrt{(2\alpha +\beta +\zeta)^{-1} \lambda +|\xi'|^2},\qquad 
 B:=\sqrt{\alpha^{-1}\lambda +|\xi'|^2},\\ \nonumber
\eta := \alpha^{-1}(\alpha +\beta +\zeta),\qquad 
\CM(x_{_N}) := (B-A)^{-1} (e^{-Bx_{_N}} - e^{-Ax_{_N}}),
\end{gather}
and the so-called \emph{Lopatinski} matrix $\BBL=[L_{ij}]_{2\times 2}$ as well, whose entries are defined by
\begin{gather}\label{eq:L_ij}
L_{11} := \frac{\alpha A (B^2 - |\xi'|^2)}{AB-|\xi'|^2}  \ccomma \qquad
L_{12}:= \frac{\alpha |\xi'|^2 (2AB - |\xi'|^2-B^2)}{AB-|\xi'|^2}  \ccomma \\ \nonumber
L_{21}:=\frac{2\alpha A(B-A)-(\beta +\zeta) (A^2-|\xi'|^2)}{AB-|\xi'|^2}  \ccomma
\qquad
L_{22}:=\frac{(2\alpha+\beta+\zeta)B (A^2-|\xi'|^2)}{AB-|\xi'|^2} \cdot
\end{gather}
By direct calculations, we can conclude the formulas (see \cite[Sec.4]{GS2014}),
\begin{align}\label{eq:GR_half_uj_1}
\wh{u_j} (\xi', x_{_N}) 
=&- \frac{\eta (i\xi_j)(L_{12}+B L_{11})}{(A+B) B \det{\BBL}}
 \frac{|\xi'|^2 -A^2}{AB- |\xi'|^2} 
 \Big( B\CM(x_{_N}) -e^{-Bx_{_N}} \Big)
\sigma (m + |\xi'|^2) \widehat{h} (\xi',0) \\\nonumber
&+\frac{(i\xi_j)L_{11}}{B\det{\BBL}}e^{-Bx_{_N}} 
    \sigma (m + |\xi'|^2) \widehat{h} (\xi',0),
  \quad  \forall\,\, j =1, \dots,N-1, \\ \label{eq:GR_half_uN_1}
\widehat{u_{_N}} (\xi', x_{_N})  
=&\Big( \frac{\eta A(L_{12}+B L_{11})}{(A+B) \det{\BBL}}
 \frac{|\xi'|^2 -A^2}{AB- |\xi'|^2} \CM(x_{_N})
 + \frac{L_{11}}{ \det{\BBL} }e^{-Bx_{_N}} \Big)
\sigma (m + |\xi'|^2) \widehat{h} (\xi',0).   
\end{align}
Then \eqref{eq:GR_half_uN_1} and $\eqref{eq:GR_half_4}_5$ imply formally that
\begin{equation}\label{eq:GR_half_h_1}
\widehat{h} (\xi',0) =\frac{\det \BBL}{N(A,B)}\widehat{k}(\xi',0)
\,\,\,\hbox{with}\,\,\,
N(A, B) := \lambda \det{\BBL} +\sigma L_{11} (m + |\xi'|^2).
\end{equation}

Next, insert \eqref{eq:GR_half_h_1} back into \eqref{eq:GR_half_uj_1}, \eqref{eq:GR_half_uN_1}, 
and take partial Fourier inverse transformation,
\begin{align}\label{eq:GR_half_uj_2}
u_{j}(x) =\CW_{j} (\lambda,\BBR^N_+)k&:= \CF_{\xi'}^{-1} \Big[ n_{j1}(\lambda,\xi') (m+|\xi'|^2) \big( B\CM(x_{_N}) -e^{-Bx_{_N}} \big) \widehat{k} (\xi',0) \Big](x')\\ \notag
& \quad +\CF_{\xi'}^{-1} \Big[ n_{j2}(\lambda,\xi') (m + |\xi'|^2) e^{-Bx_{_N}} \widehat{k} (\xi',0) \Big](x'),\\
u_{_N}(x) = \CW_{N} (\lambda,\BBR^N_+)k&:= \CF_{\xi'}^{-1} \Big[ n_{N1}(\lambda,\xi') (m+|\xi'|^2)B\CM(x_{_N})  \widehat{k} (\xi',0) \Big](x')  \label{eq:GR_half_uN_2}\\
& \quad +\CF_{\xi'}^{-1} \Big[ n_{N2}(\lambda,\xi')  (m+|\xi'|^2) e^{-Bx_{_N}} \widehat{k} (\xi',0) \Big](x'), \notag
\end{align}
where the symbols $n_{J1}(\lambda,\xi')$ and  $n_{J2}(\lambda,\xi')$ ($J=1,\dots,N$) are given by
\begin{align}\label{eq:n_Jk_half}
 n_{j1}(\lambda,\xi') :=&- \frac{\sigma\eta (i\xi_j) (L_{12}+B L_{11})}{  B (A+B) N(A,B)}
 \frac{|\xi'|^2 -A^2}{AB- |\xi'|^2} \ccomma \\ 
 n_{j2}(\lambda,\xi') :=& \frac{\sigma(i\xi_j)L_{11}}{BN(A,B)}, 
 \quad \forall\,\, j=1,\dots, N-1, \notag \\ 
 n_{N1}(\lambda,\xi') :=& \frac{\sigma \eta A (L_{12}+B L_{11})}{B(A+B) N(A,B)}
 \frac{|\xi'|^2 -A^2}{AB- |\xi'|^2}  \ccomma \notag\\
 n_{N2}(\lambda,\xi') :=& \frac{\sigma L_{11}}{N(A,B)} \cdot \notag 
\end{align}

On the other hand,  according to \eqref{eq:GR_half_h_1} and Lemma \ref{lemma:N_AB}, we set $h$ as follows,
\begin{equation}\label{eq:GR_half_h_2}
h (x) =\big(\CH(\lambda,\BBR^N_+) k\big) (x):=
\varphi(x_{_N}) \CF^{-1}_{\xi'}\Big[ 
\frac{\det \BBL}{N(A,B)} \, e^{-|\xi'|x_{_N}}
\widehat{k}(\xi',0)\Big] (x')
\end{equation}
for any $x=(x',x_{_N}) \in \BBR^N_+,$
where the cut-off function $\varphi(s) \in C^{\infty}_0( ]-2,2[)$ satisfies 
$0\leq \varphi(s)\leq 1$ and 
$\varphi(s)=1$ for $|s| <1.$

\subsection{Some preliminary results}
In this subsection, we summarize some results to study the $\CR$-boundedness properties of the operator families in \eqref{eq:GR_half_uj_2}, \eqref{eq:GR_half_uN_2} and \eqref{eq:GR_half_h_2}. Firstly, recall the definitions of the class of the symbols.
\begin{defi} 
Assume that $\Lambda\subset \BBC,$  $s \in \BBR$ and $\kappa' \in \BBN^{N-1}_0.$
Consider some smooth function $m(\lambda,\xi')$ defined in $  \wt\Lambda := \Lambda \times (\BBR^{N-1} \backslash \{0\}) $ such that
\begin{equation*}
|m(\lambda,\xi')|  \leq C_{\Lambda} ( |\lambda|^{1\slash 2} + |\xi'| )^s 
\,\,\,\text{for any}\,\,(\lambda, \xi') \in \wt \Lambda.
\end{equation*} 
\begin{itemize}
\item $m(\lambda, \xi')$ is called a multiplier of order $s$ with type 1 on $\wt\Lambda,$ 
denoted by $m \in \BM_{s,1}(\wt\Lambda),$ if there exists a constant $C_{\kappa',s,\Lambda}$ such that 
\begin{equation*}
\big| \pa_{\xi'}^{\kappa'} (\tau \pa_\tau)^{\ell}m(\lambda,\xi')\big|
  \leq C_{\kappa',s,\Lambda}   ( |\lambda|^{1\slash 2} + |\xi'| )^{s-|\kappa'|}
\end{equation*}
for any $(\lambda, \xi') \in \wt \Lambda,$ $\tau := \Im \lambda$ and $\ell =0,1.$
\item $m(\lambda, \xi')$ is called a multiplier of order $s$ with type 2 on $\wt\Lambda,$ 
denoted by $m \in \BM_{s,2}(\wt\Lambda),$ if there exists a constant $C_{\kappa',s,\Lambda}$ such that 
\begin{equation*}
\big| \pa_{\xi'}^{\kappa'} (\tau \pa_\tau)^{\ell}m(\lambda,\xi')\big|
  \leq C_{\kappa',s,\Lambda}  ( |\lambda|^{1\slash 2} + |\xi'| )^{s} |\xi'|^{-|\kappa'|}
\end{equation*}
for $(\lambda, \xi') \in \wt \Lambda,$ $\tau := \Im \lambda$ and $\ell =0,1.$
\end{itemize}
For any $m\in \BM_{s,i}(\wt\Lambda),$ $i=1,2,$ we denote 
$M(m,\Lambda):= \max_{|\kappa'|\leq N}C_{\kappa',s,\Lambda} .$
\end{defi} 
\begin{rema} \label{rmk:M1M2}
Let $s,s_1, s_2 \in \BBR$ and $i=1,2.$ Denote
$\BM_{s,i} :=\BM_{s,i} \big(\Lambda \times (\BBR^{N-1} \backslash \{0\})\big)$ 
for short. As it was pointed out in \cite[Lemma 5.1]{ShiShi2012}, we have
\begin{enumerate}
\item $m_1m_2 \in \BM_{s_1+s_2,1}$ 
(or $\BM_{s_1+s_2,2}$) for
$m_i \in \BM_{s_i,1}$ (or $m_i \in \BM_{s_i,2}$ resp.);
\item $m_1m_2 \in \BM_{s_1+s_2,2}$ for
$m_i \in \BM_{s_i,i}$ and $i=1,2.$
\end{enumerate}
\end{rema}

Now, let us state some useful results proved in \cite{ES2013,GS2014} on $A,$ $B$ and $\BBL=[L_{jk}]_{2\times 2}$ in \eqref{eq:AB} and \eqref{eq:L_ij}. 
In the following,
we write $\wt \Gamma_{\ep,\lambda_0,\zeta}:= \Gamma_{\ep,\lambda_0,\zeta} \times (\BBR^{N-1}\backslash\{0\})$
\footnote{For the case (C1), $\zeta:=\gamma_1^{-1}\gamma_3 \lambda^{-1}.$} for simplicity.
\begin{lemm}\label{lemma:basic}
Let $0<\ep<\pi \slash 2,$  $\lambda_0,\mu, \nu>0,$ $s\geq 1$ and $\xi'\in \BBR^{N-1}.$
Assume that \eqref{hyp:half_space} is satisfied.
Then the following assertions hold true.
\begin{enumerate}
\item For any $\lambda \in \Sigma_\ep$ and $a>0,$  we have 
$\big| a \lambda + |\xi'|^2 \big| 
\geq \sin (\ep \slash 2) \big( a  |\lambda|+|\xi'|^2 \big).$
\item There exists a constant $0<\ep'<\pi\slash 2$ such that 
\begin{equation*}
(s \alpha + \beta +\zeta)^{-1} \lambda \in \Sigma_{\ep'},
 \,\,\, \forall\,\, \lambda \in \Gamma_{\ep,\lambda_0,\zeta},
\end{equation*}
where the choice of $\ep'$ depends solely on $\ep,s,\mu,\nu,\lambda_0,\zeta_0,\rho_1,\rho_2,\rho_3.$ 

\item For any $\lambda \in \Gamma_{\ep,\lambda_0,\zeta},$ there exist constants $c,C>0$ such that
\begin{equation*}
c \big( |\lambda| + |\xi'|^2 \big)
 \leq \big| (s \alpha + \beta +\zeta)^{-1} \lambda + |\xi'|^2 \big|
  \leq C\big( |\lambda| + |\xi'|^2 \big) ,
\end{equation*}
with $c=c(\ep,s,\mu,\nu,\lambda_0,\zeta_0,\rho_1,\rho_2,\rho_3)$
and $C=C(\ep,s, \mu, \nu,\rho_2).$
\end{enumerate}
\end{lemm}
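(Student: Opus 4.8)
The plan is to verify each of the three assertions by elementary complex analysis, exploiting the geometry of $\Sigma_\ep$ and $\Gamma_{\ep,\lambda_0,\zeta}$ together with the smallness constraint $|\zeta|\leq \zeta_0$ in \eqref{hyp:half_space}. For assertion (1), I would write $z := a\lambda+|\xi'|^2$ and observe that $z$ is a convex combination, up to positive scaling, of $a\lambda \in \Sigma_\ep$ and the positive real number $|\xi'|^2$; since $a\lambda$ lies in the sector $\{|\arg w| \leq \pi-\ep\}$, its real part satisfies $\Re(a\lambda)\geq -\cos\ep\,|a\lambda|$, and a direct computation of $|z|^2 = (\Re z)^2 + (\Im z)^2$ against $(a|\lambda|+|\xi'|^2)^2$, after bounding $\Re z \geq -\cos\ep\, a|\lambda| + |\xi'|^2$ and $|\Im z| = |\Im(a\lambda)| = a|\Im\lambda|$, yields the claimed lower bound with constant $\sin(\ep/2)$; this is the standard sector estimate and I expect no difficulty here.

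For assertion (2), the key point is that the coefficient $s\alpha+\beta+\zeta = \gamma_1^{-1}\big((s-1)\mu + \nu\big) + \gamma_1^{-1}\gamma_3\zeta$ has a real part bounded below by a positive constant depending only on $\mu,\nu,\rho_1,\rho_2$ once $\zeta_0$ is small relative to these, but in fact $\zeta_0$ need not be small: we only need that the argument of $s\alpha+\beta+\zeta$ stays away from $\pm\pi/2$ by a fixed amount. I would split into the three cases (C1), (C2), (C3) defining $\Gamma_{\ep,\lambda_0,\zeta}$. In case (C3), $\Re\zeta\geq 0$ forces $\Re(s\alpha+\beta+\zeta)>0$, and dividing $\lambda$ (which has $\Re\lambda\geq\lambda_0$, i.e.\ $\lambda$ in a right half-plane) by a number in the open right half-plane keeps the quotient in some sector $\Sigma_{\ep'}$. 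In case (C2), $\zeta\in\Sigma_\ep$ with $\Re\zeta<0$, and the defining cone $\Re\lambda\geq |\Re\zeta/\Im\zeta|\,|\Im\lambda|$ is precisely engineered so that $\arg\lambda$ is controlled by $|\arg\zeta'|$ for $\zeta' := $ the relevant combination, making the quotient's argument bounded; here I must track how $\arg(s\alpha+\beta+\zeta)$ relates to $\arg\zeta$ using that $\gamma_1^{-1}\big((s-1)\mu+\nu\big)>0$ is real, so the combination lies in the convex hull of $\Sigma_\ep\cap\{\Re<0\}$ and the positive reals, hence in some sector $\Sigma_{\ep''}$. Case (C1) uses $\zeta=\gamma_1^{-1}\gamma_3\lambda^{-1}$ with $\lambda\in\Lambda_{\ep,\lambda_0}$, and the region $\Lambda_{\ep,\lambda_0}$ in \eqref{def:Lambda} is defined exactly so that $\lambda$ avoids the disk around $-\rho_3/\nu-\ep$ that would bring $\nu+\gamma_3\zeta = \nu + \gamma_1\gamma_3\gamma_1^{-1}\lambda^{-1}$... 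I would instead argue: $s\alpha+\beta+\zeta = \gamma_1^{-1}\big((s-1)\mu+\nu+\gamma_3\zeta\big)$ and $\lambda(s\alpha+\beta+\zeta) = \gamma_1^{-1}\big(\lambda((s-1)\mu+\nu) + \gamma_3\big)$, whose distance from the negative reals is controlled by the $\Lambda_{\ep,\lambda_0}$ condition; then $(s\alpha+\beta+\zeta)^{-1}\lambda = \gamma_1 \lambda^2 / \big(\lambda((s-1)\mu+\nu)+\gamma_3\big)$ and one checks its argument stays in a fixed sector. The main obstacle is precisely this case-(C1) bookkeeping: making the geometry of $\Lambda_{\ep,\lambda_0}$ yield a uniform $\ep'$ requires carefully relating the parabola-exclusion in \eqref{def:Lambda} to the non-vanishing and argument control of the denominator $\lambda((s-1)\mu+\nu)+\gamma_3$ uniformly over $x\in\overline\Omega$ and over the admissible $\zeta_0$.

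For assertion (3), once (2) is established the upper bound is immediate from the triangle inequality: $\big|(s\alpha+\beta+\zeta)^{-1}\big|\leq C(\mu,\nu,\rho_1,\rho_2)$ (since $|s\alpha+\beta+\zeta|$ is bounded below once its argument is controlled and we also bound it below by its real part when available, or more simply $|s\alpha+\beta+\zeta|\geq$ a constant from \eqref{hyp:half_space} when $\Re\zeta\geq 0$, and in general from the sector membership of the number itself) gives $\big|(s\alpha+\beta+\zeta)^{-1}\lambda+|\xi'|^2\big|\leq C(|\lambda|+|\xi'|^2)$. For the lower bound, I apply assertion (1) with $a := (s\alpha+\beta+\zeta)^{-1}\cdot(\text{rotation})$: precisely, since by (2) we have $(s\alpha+\beta+\zeta)^{-1}\lambda\in\Sigma_{\ep'}$, I write $w := (s\alpha+\beta+\zeta)^{-1}\lambda + |\xi'|^2$, a sum of an element of $\Sigma_{\ep'}$ and a nonnegative real, and invoke the sector estimate of part (1) (with $a=1$, $\ep$ replaced by $\ep'$, and $\lambda$ replaced by that element) to get $|w|\geq \sin(\ep'/2)\big(|(s\alpha+\beta+\zeta)^{-1}\lambda|+|\xi'|^2\big)\geq c\big(|\lambda|+|\xi'|^2\big)$, where the last step uses $|(s\alpha+\beta+\zeta)^{-1}|\geq c(\mu,\nu,\rho_2)$ from the boundedness $|s\alpha+\beta+\zeta|\leq C(\mu,\nu,\rho_1,\rho_2,\zeta_0)$. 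Throughout I would record that all constants depend only on the listed parameters and are uniform in $x\in\overline\Omega$ because $\gamma_1,\gamma_3$ obey \eqref{hyp:half_space} pointwise. The whole argument is short; the only genuinely delicate piece is the uniform sector claim in (2) for case (C1), and I would isolate that as the heart of the proof.
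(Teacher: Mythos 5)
You correctly identify that (1) is a standard sector estimate and that (3) follows from (2) together with (1); the substance is in (2), and there your case analysis has a genuine logical gap that recurs in each case. The inference ``$\lambda$ lies in a cone and $s\alpha+\beta+\zeta$ lies in some sector $\Sigma_{\ep''}$, hence $(s\alpha+\beta+\zeta)^{-1}\lambda$ lies in a fixed sector $\Sigma_{\ep'}$'' is a non sequitur: a quotient of two numbers that can each approach the imaginary axis can have argument arbitrarily close to $\pm\pi$. What rescues each case is a \emph{complementarity} between the cone containing $\lambda$ and a quantitative bound on $|\arg(s\alpha+\beta+\zeta)|$, which you never establish. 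In (C3), $|\arg\lambda|<\pi/2$ carries no uniform margin, so the entire load rests on $|\arg(s\alpha+\beta+\zeta)|\le\arctan\big(\rho_2\rho_1^{-1}\rho_3\zeta_0/\nu\big)<\pi/2$, which follows from $\Re(s\alpha+\beta+\zeta)\ge\rho_2^{-1}\nu$ and $|\Im(s\alpha+\beta+\zeta)|=|\Im\zeta|\le\rho_1^{-1}\rho_3\zeta_0$; ``dividing by a number in the open right half-plane'' alone is not enough and can produce a quotient of any argument. In (C2), the cone gives $|\arg\lambda|\le\pi-|\arg\zeta|$, and the missing ingredient is that translating $\zeta$ by the positive real $s\alpha+\beta$ \emph{strictly and uniformly} shrinks the argument: one must prove $|\arg(s\alpha+\beta+\zeta)|\le|\arg\zeta|-\delta$ for some $\delta>0$ depending only on $\ep,\mu,\nu,\zeta_0,\rho_1,\rho_2,\rho_3$ (using both $|\zeta|\le\rho_1^{-1}\rho_3\zeta_0$ and $s\alpha+\beta\ge\rho_2^{-1}\nu$), so that $|\arg\lambda|+|\arg(s\alpha+\beta+\zeta)|\le\pi-\delta$. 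Merely noting $s\alpha+\beta+\zeta\in\Sigma_{\ep''}$ for some $\ep''>0$ does not yield this.

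For (C1) you explicitly flag the bookkeeping as not carried out, and you also misread the geometry: the excluded region in $\Lambda_{\ep,\lambda_0}$ of \eqref{def:Lambda} is a \emph{disk} through the origin centered at $-(\rho_3/\nu+\ep)$, not a parabola. The task there is to bound $\arg\big(\lambda^2/(((s-1)\mu+\nu)\lambda+\gamma_3)\big)$ uniformly for $\lambda\in\Lambda_{\ep,\lambda_0}$; the zero $-\gamma_3/((s-1)\mu+\nu)\in[-\rho_3/\nu,0)$ of the denominator is a real negative number, and $|\arg\lambda|\le\pi-\ep$ together with $|\lambda|\ge\lambda_0$ keeps $\lambda$ a positive distance from it, after which a direct computation gives the argument bound; this is precisely the estimate you must make quantitative. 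Since the paper states this lemma citing \cite{ES2013,GS2014} and supplies no in-paper proof, I am comparing your sketch against the claim itself: the structure is right and parts (1), (3) are essentially fine, but the uniform sector estimates at the heart of (2) are asserted rather than proved, and that is exactly where the lemma's content lies.
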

\begin{lemm}\label{lemma:ABL}
Let $0<\ep<\pi \slash 2,$  $\lambda_0,\mu, \nu>0,$ $s\in \BBR,$ $\kappa'\in \BBN_0^{N-1}$ and $\ell=0,1.$  Assume that  \eqref{hyp:half_space} is satisfied.
Then the following assertions hold true.
\begin{enumerate}
\item For any $(\lambda,\xi')\in \wt \Gamma_{\ep,\lambda_0,\zeta},$ we have
\begin{gather*}
  c_{\ep,\mu,\nu,\lambda_0,\zeta_0,\rho_1,\rho_2,\rho_3}( |\lambda|^{1\slash 2}+|\xi'|) \leq |A| 
  \leq C_{\ep,\mu, \nu,\rho_2} (|\lambda|^{1\slash 2} + |\xi'| ),\\
c_{\ep,\mu,\rho_1,\rho_2} (|\lambda|^{1\slash 2} + |\xi'| ) \leq |B| 
  \leq C_{\ep,\mu,\rho_1,\rho_2} ( |\lambda|^{1\slash 2} + |\xi'| ).
\end{gather*}
In fact, $A^s,B^s \in \BM_{s,1}(\wt\Gamma_{\ep,\lambda_0,\zeta})$ 
for any $s \in \BBR$ with
\begin{gather*}
\big| \pa_{\xi'}^{\kappa'} (\tau \pa_\tau)^{\ell} A^s  \big|
\leq C_{\kappa',s,\ep,\mu,\nu,\lambda_0,\zeta_0,\rho_1,\rho_2,\rho_3} 
( |\lambda|^{1\slash 2} + |\xi'| )^{s-|\kappa'|},\\
\big| \pa_{\xi'}^{\kappa'} (\tau \pa_\tau)^{\ell} B^s \big|
\leq C_{\kappa',s,\ep,\mu,\rho_1,\rho_2} ( |\lambda|^{1\slash 2} + |\xi'| )^{s-|\kappa'|},
\end{gather*}
for any $(\lambda,\xi')\in \wt \Gamma_{\ep,\lambda_0,\zeta}.$
Moreover, there exists some constant $c'=c'(\ep,\mu,\rho_1,\rho_2)$ such that
\begin{equation*}
\big| \pa_{\xi'}^{\kappa'} (\tau \pa_\tau)^{\ell}e^{-B x_{_N}}\big|
\leq  C_{\kappa',\ep,\mu,\rho_1,\rho_2}  
(|\lambda|^{1\slash 2} + |\xi'| )^{-|\kappa'|} e^{-c'(|\lambda|^{1\slash 2} +|\xi'|)x_{_N}},
\end{equation*}
for any $(\lambda,\xi', x_{_N}) \in \wt\Gamma_{\ep,\lambda_0,\zeta} \times \BBR_+.$ 

\item $L_{11}, L_{22} \in \BM_{1,1}(\wt\Gamma_{\ep,\lambda_0,\zeta}),$ 
$L_{12} \in \BM_{2,1}(\wt\Gamma_{\ep,\lambda_0,\zeta}),$
$L_{21} \in \BM_{0,1}(\wt\Gamma_{\ep,\lambda_0,\zeta})$
and 
$$L^\pm :=(\det \BBL)^{\pm 1} \in \BM_{\pm 2,1}(\wt\Gamma_{\ep,\lambda_0,\zeta}).$$
Moreover, $M(L_{jk}, \Gamma_{\ep,\lambda_0,\zeta} ),$ $j,k=1,2,$
$M(L^\pm,\Gamma_{\ep,\lambda_0,\zeta})$ depend solely on $N,\ep,\mu,\nu,\lambda_0,\zeta_0,\rho_1,\rho_2,\rho_3.$

\item Set $Q(\lambda,\xi'):=(|\xi'|^2 -A^2) \slash (AB-|\xi'|^2)$ and we have  
$Q \in \BM_{0,1}(\wt\Gamma_{\ep,\lambda_0,\zeta})$ 
with
$M(Q, \Gamma_{\ep,\lambda_0,\zeta} )$ depending solely on
$N,\ep,\mu,\nu,\lambda_0,\zeta_0,\rho_1,\rho_2,\rho_3.$
\end{enumerate}
\end{lemm}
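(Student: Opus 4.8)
The plan is to reduce every assertion to the magnitude and sectoriality estimates of Lemma~\ref{lemma:basic} together with the algebra of the symbol classes $\BM_{s,i}$ recorded in Remark~\ref{rmk:M1M2}; the only genuinely structural ingredients, namely the lower bounds for $AB-|\xi'|^2$ and for $\det\BBL$, are those established in \cite{ES2013,GS2014}, so the substantive work is to carry the parameter $\zeta$ through those estimates and to verify that all constants stay uniform over $\wt\Gamma_{\ep,\lambda_0,\zeta}$ in each of the configurations (C1)--(C3) of \eqref{eq:Gamma}. Throughout I abbreviate $a:=(2\alpha+\beta+\zeta)^{-1}$ and $b:=\alpha^{-1}$, so that $A^2=a\lambda+|\xi'|^2$, $B^2=b\lambda+|\xi'|^2$, $A^2-|\xi'|^2=a\lambda$ and $B^2-|\xi'|^2=b\lambda$.

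For assertion~(1) I would first record that $B^2$ and $A^2$ lie in sectors of the form $\Sigma_{\ep'}$ and satisfy $c(|\lambda|+|\xi'|^2)\le|A^2|,|B^2|\le C(|\lambda|+|\xi'|^2)$ on $\wt\Gamma_{\ep,\lambda_0,\zeta}$: for $B^2=\alpha^{-1}\lambda+|\xi'|^2$ this is immediate from Lemma~\ref{lemma:basic}(1), since $\alpha>0$, while for $A^2$ it is Lemma~\ref{lemma:basic}(2)--(3) with $s=2$. Fixing the principal branch of the square root, membership in $\Sigma_{\ep'}$ forces $\Re A,\Re B\ge c_0(|\lambda|^{1/2}+|\xi'|)$, and taking moduli in $|A|^2=|A^2|$ gives the two-sided bounds on $|A|$ and likewise on $|B|$. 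For the derivative bounds one checks $A^2,B^2\in\BM_{2,1}$ (from $|\xi'|^2\in\BM_{2,1}$ and $\lambda\in\BM_{2,1}$) and then invokes the elementary fact that $m\in\BM_{2,1}$ with $|m|\ge c(|\lambda|^{1/2}+|\xi'|)^2$ implies $m^{s/2}\in\BM_{s,1}$ for every $s\in\BBR$; this follows by induction on $|\kappa'|$ via the Fa\`a~di~Bruno formula, each differentiation producing a factor $m^{s/2-k}$ against a product of $k$ derivatives of $m$, the lower bound on $|m|$ absorbing the negative powers. Applied to $m=A^2,B^2$ this gives $A^s,B^s\in\BM_{s,1}$ with the stated constants. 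For $e^{-Bx_{_N}}$, differentiating in $\xi'$ and applying Leibniz, a typical term is $x_{_N}^{\,k}(\pa_{\xi'}^{\kappa_1}B)\cdots(\pa_{\xi'}^{\kappa_k}B)e^{-Bx_{_N}}$ with $\kappa_1+\cdots+\kappa_k=\kappa'$; bounding $|\pa_{\xi'}^{\kappa_i}B|\le C(|\lambda|^{1/2}+|\xi'|)^{1-|\kappa_i|}$, using $\Re B\ge c_0(|\lambda|^{1/2}+|\xi'|)$ together with $t^{k}e^{-c_0t}\le C_k e^{-(c_0/2)t}$ (applied to $t=(|\lambda|^{1/2}+|\xi'|)x_{_N}$), and treating $\tau\pa_\tau$ analogously, one obtains the claimed bound with $c'=c_0/2$.

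For assertion~(2) I would substitute $A^2-|\xi'|^2=a\lambda$ and $B^2-|\xi'|^2=b\lambda$ into the explicit formulas \eqref{eq:L_ij}, so that each $L_{jk}$ becomes a quotient whose numerator is a product of powers of $A$, $B$, $\lambda$, $|\xi'|$ lying in an explicit class $\BM_{s,1}$ by Remark~\ref{rmk:M1M2} and whose denominator is $AB-|\xi'|^2$. The decisive identity is $AB-|\xi'|^2=(A^2B^2-|\xi'|^4)/(AB+|\xi'|^2)=\lambda(bA^2+a|\xi'|^2)/(AB+|\xi'|^2)$ with $bA^2+a|\xi'|^2=ab(\lambda+(3\alpha+\beta+\zeta)|\xi'|^2)$; the factor $bA^2+a|\xi'|^2$ lies in $\BM_{2,1}$ and is bounded below by $c(|\lambda|^{1/2}+|\xi'|)^2$, which is proved exactly as Lemma~\ref{lemma:basic}(1)--(3) and is where the shape of $\Gamma_{\ep,\lambda_0,\zeta}$ under (C1)--(C3) is used. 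Combining this with the compensating factor $\lambda$ present in each numerator, one checks $L_{11},L_{22}\in\BM_{1,1}$, $L_{12}\in\BM_{2,1}$, $L_{21}\in\BM_{0,1}$, hence $\det\BBL=L_{11}L_{22}-L_{12}L_{21}\in\BM_{2,1}$. For $L^{-}=(\det\BBL)^{-1}\in\BM_{-2,1}$ one additionally needs the Lopatinski lower bound $|\det\BBL|\ge c(|\lambda|^{1/2}+|\xi'|)^2$ on $\wt\Gamma_{\ep,\lambda_0,\zeta}$, i.e.\ the Shapiro--Lopatinski condition for the generalized Lam\'e operator with this free boundary condition; I would quote it from \cite{ES2013,GS2014} and re-derive it with $\zeta$ carried along, the uniformity coming once more from Lemma~\ref{lemma:basic}, after which $L^{\pm}\in\BM_{\pm2,1}$ by the square-root/power lemma of step~(1). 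Assertion~(3) is then immediate: since $|\xi'|^2-A^2=-a\lambda$, the same computation gives $Q=(|\xi'|^2-A^2)/(AB-|\xi'|^2)=-a(AB+|\xi'|^2)/(bA^2+a|\xi'|^2)$, a quotient of an element of $\BM_{2,1}$ (with $a\in\BM_{0,1}$ and $AB+|\xi'|^2\in\BM_{2,1}$) by an element of $\BM_{2,1}$ bounded below by $c(|\lambda|^{1/2}+|\xi'|)^2$, whence $Q\in\BM_{0,1}$; the constant $M(Q,\Gamma_{\ep,\lambda_0,\zeta})$ and the constants $M(L_{jk},\Gamma_{\ep,\lambda_0,\zeta})$, $M(L^{\pm},\Gamma_{\ep,\lambda_0,\zeta})$ then depend only on $N,\ep,\mu,\nu,\lambda_0,\zeta_0,\rho_1,\rho_2,\rho_3$.

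The main obstacle is the lower bound for $|\det\BBL|$ (and the analogous lower bound for $bA^2+a|\xi'|^2$): unlike the purely algebraic manipulations above, it requires exploiting the precise structure of $\BBL$ and, crucially, checking that the perturbation $\zeta$, constrained by (C1)--(C3) and $|\zeta|\le\zeta_0$, does not destroy strict positivity. Once Lemma~\ref{lemma:basic} is in hand this reduces to the estimates already carried out in \cite{ES2013,GS2014}, and the remaining work is the tedious but routine bookkeeping of the dependence of the constants on $\ep,\mu,\nu,\lambda_0,\zeta_0,\rho_1,\rho_2,\rho_3,N$.
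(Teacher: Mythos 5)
The paper itself does not prove this lemma: it is introduced with ``let us state some useful results proved in \cite{ES2013,GS2014}'' and no proof is supplied, so there is no in-paper argument to compare against. Your sketch is a correct reconstruction of the standard argument --- two-sided sectorial bounds on $A^2,B^2$ from Lemma~\ref{lemma:basic}, Fa\`a di Bruno for $A^s,B^s$, Leibniz together with $\Re B\geq c_0(|\lambda|^{1/2}+|\xi'|)$ for the exponential, and the rationalizing identity $AB-|\xi'|^2=\lambda(bA^2+a|\xi'|^2)/(AB+|\xi'|^2)$, which indeed reproduces the factor $P$ used later in the paper's proof of Lemma~\ref{lemma:N_AB} --- and, exactly as the paper does, it defers the essential Shapiro--Lopatinski lower bound for $\det\BBL$ to \cite{ES2013,GS2014}. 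The dependence on the cases (C1)--(C3) and on $\zeta_0,\rho_1,\rho_2,\rho_3$ is correctly funneled through Lemma~\ref{lemma:basic}, so your bookkeeping of constants is consistent with how the rest of the paper uses this statement.
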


Thanks to Lemma \ref{lemma:basic} and Lemma \ref{lemma:ABL} above, it is easy to show that:
\begin{lemm}[Interaction]\label{lemma:int_AB}
Under the same assumptions in Lemma \ref{lemma:ABL}, we have
$$Q':=(AB + |\xi'|^2)^{-1} \in \BM_{-2,1}(\Gamma_{\ep,\lambda_0}),$$ 
with $M(Q', \Gamma_{\ep,\lambda_0,\zeta} )$ depending solely on $N,\ep,\mu,\nu,\lambda_0,\zeta_0,\rho_1,\rho_2,\rho_3.$
\end{lemm}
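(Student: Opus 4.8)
The plan is to deduce the claim directly from Lemma \ref{lemma:basic} and Lemma \ref{lemma:ABL}: I would first establish a uniform lower bound $|D|\gtrsim(|\lambda|^{1\slash2}+|\xi'|)^{2}$ for the denominator $D:=AB+|\xi'|^{2},$ and then read off the symbol estimates for $Q'=D^{-1}$ by differentiating the reciprocal and using the calculus of the classes $\BM_{s,1}$ recorded in Remark \ref{rmk:M1M2}.

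For the lower bound, the first step is to show that $D$ lies in a fixed proper subsector of $\BBC.$ Since $\alpha=\gamma_1^{-1}\mu>0$ and $\Gamma_{\ep,\lambda_0,\zeta}\subset\Sigma_{\ep}$ in each of the cases (C1)--(C3), one has $\alpha^{-1}\lambda\in\Sigma_{\ep},$ hence $B^{2}=\alpha^{-1}\lambda+|\xi'|^{2}\in\Sigma_{\ep}$ since $\Sigma_{\ep}$ is stable under adding nonnegative reals; and by Lemma \ref{lemma:basic}(2) with $s=2$ there is $\ep'$ with $0<\ep'<\pi\slash2,$ of a size independent of $\zeta,$ such that $(2\alpha+\beta+\zeta)^{-1}\lambda\in\Sigma_{\ep'},$ so likewise $A^{2}=(2\alpha+\beta+\zeta)^{-1}\lambda+|\xi'|^{2}\in\Sigma_{\ep'}.$ Taking principal square roots gives $A\in\Sigma_{\pi\slash2+\ep'\slash2}$ and $B\in\Sigma_{\pi\slash2+\ep\slash2},$ whence $AB\in\Sigma_{\ep''}$ with $\ep'':=(\ep+\ep')\slash2>0,$ and adding $|\xi'|^{2}\geq0$ keeps $D\in\Sigma_{\ep''}.$ Now Lemma \ref{lemma:basic}(1), applied with $\lambda$ there replaced by $AB\in\Sigma_{\ep''},$ together with the lower bounds $|A|,|B|\geq c(|\lambda|^{1\slash2}+|\xi'|)$ of Lemma \ref{lemma:ABL}(1), yields
\[
|D|\ \geq\ \sin(\ep''\slash2)\big(|A|\,|B|+|\xi'|^{2}\big)\ \geq\ c_{0}\,(|\lambda|^{1\slash2}+|\xi'|)^{2}
\]
with $c_{0}$ depending only on $N,\ep,\mu,\nu,\lambda_{0},\zeta_{0},\rho_{1},\rho_{2},\rho_{3};$ in particular $|Q'|\leq c_{0}^{-1}(|\lambda|^{1\slash2}+|\xi'|)^{-2}.$

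For the derivative estimates I would first record that $D\in\BM_{2,1}(\wt\Gamma_{\ep,\lambda_0,\zeta})$: indeed $A,B\in\BM_{1,1}$ by Lemma \ref{lemma:ABL}(1), so $AB\in\BM_{2,1}$ by Remark \ref{rmk:M1M2}(1), while $|\xi'|^{2}\in\BM_{2,1}$ is immediate, and $\BM_{2,1}$ is stable under sums. Differentiating $Q'=D^{-1}$ repeatedly (via the Leibniz rule and $\pa(D^{-1})=-D^{-2}\pa D$), each $\pa_{\xi'}^{\kappa'}(\tau\pa_\tau)^{\ell}Q'$ becomes a finite sum of terms $D^{-(k+1)}\prod_{i=1}^{k}\pa_{\xi'}^{\kappa'_{i}}(\tau\pa_\tau)^{\ell_{i}}D$ with $\sum_{i}\kappa'_{i}=\kappa'$ and $\ell_{i},\ell\in\{0,1\};$ combining $|D^{-(k+1)}|\leq c_{0}^{-(k+1)}(|\lambda|^{1\slash2}+|\xi'|)^{-2(k+1)}$ with the defining bound $|\pa_{\xi'}^{\kappa'_{i}}(\tau\pa_\tau)^{\ell_{i}}D|\leq C(|\lambda|^{1\slash2}+|\xi'|)^{2-|\kappa'_{i}|}$ of $\BM_{2,1}$ (the factor $\tau\pa_\tau$ not worsening the power of $|\lambda|^{1\slash2}+|\xi'|$) bounds each such term by $C(|\lambda|^{1\slash2}+|\xi'|)^{-2-|\kappa'|}.$ Hence $Q'\in\BM_{-2,1}(\wt\Gamma_{\ep,\lambda_0,\zeta}),$ and $M(Q',\Gamma_{\ep,\lambda_0,\zeta})$ is controlled by $c_{0}$ together with the constants $M(A,\cdot),M(B,\cdot)$ of Lemma \ref{lemma:ABL}, i.e.\ by $N,\ep,\mu,\nu,\lambda_{0},\zeta_{0},\rho_{1},\rho_{2},\rho_{3}$ alone.

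Apart from this, the argument is routine bookkeeping with the symbol calculus. The one place that genuinely needs care --- and what I expect to be the main obstacle --- is the sectoriality step in the second paragraph: one must make sure that the opening angle $\ep'$ furnished by Lemma \ref{lemma:basic}(2) (for $s=2$) is bounded below \emph{uniformly in} $\zeta$ across the three cases (C1)--(C3), so that $\ep''>0,$ and hence $c_{0}>0,$ do not degenerate; this is precisely why the admissible region in case (C1) is the curved set $\Lambda_{\ep,\lambda_0}$ rather than a full sector.
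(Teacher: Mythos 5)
Your argument is correct and follows essentially the same route as the paper: the core step in both is to show that $AB$ lies in a proper subsector $\Sigma_{\ep_0}$ (by bounding $|\arg A|$ and $|\arg B|$ via Lemma \ref{lemma:basic}(2) uniformly in $\zeta$ over the cases (C1)--(C3)), then to invoke Lemma \ref{lemma:basic}(1) and the lower bounds on $|A|,|B|$ from Lemma \ref{lemma:ABL}(1) to obtain $|AB+|\xi'|^2|\geq c_0(|\lambda|^{1\slash2}+|\xi'|)^2.$ Your write-up simply makes explicit the derivative estimates for $Q'=D^{-1}$ (Leibniz/Fa\`a di Bruno on $D^{-1}$ together with $D\in\BM_{2,1}$), a step the paper deliberately leaves implicit with a remark that the same bookkeeping appears in the proof of Lemma \ref{lemma:N_AB}.
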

\begin{proof}
In fact, it is sufficient to verify that there exists a constant $c=c(\ep,\mu,\nu,\lambda_0,\zeta_0,\rho_1,\rho_2,\rho_3)$ 
such that 
\footnote{We omit the details of the reduction here, because the similar arguments will be employed in the proof of Lemma \ref{lemma:N_AB} later.}
\begin{equation*}
\big| AB + |\xi'|^2 \big| \geq c(|\lambda| + |\xi'|^2), 
\,\,\,\forall \,\, (\lambda,\xi') \in \wt\Gamma_{\ep,\lambda_0,\zeta}.
\end{equation*}
The lower bound of $AB+|\xi'|^2$ above immediately follows from the fact 
\begin{equation}\label{claim:AB}
AB \in \Sigma_{\ep_0}, \,\,\,\text{for some}\,\,0<\ep_0 <\pi\slash 2.
\end{equation}
Indeed, Lemma \ref{lemma:basic} and Lemma \ref{lemma:ABL} imply that
\begin{equation*}
\big| AB + |\xi'|^2 \big| \geq \sin(\ep_0 \slash 2) (|AB| + |\xi'|^2)
 \geq  \sin(\ep_0 \slash 2) c^2 (|\lambda|^{1\slash 2} + |\xi'|^2)^2.
\end{equation*}

Now, we prove \eqref{claim:AB}. Thanks to  Lemma \ref{lemma:basic}, there exists $0<\ep'<\pi\slash 2$ such that 
$z_1:=(2\alpha +\beta +\zeta)^{-1} \lambda$ belongs to $\Sigma_{\ep'}.$
Moreover, for any $\lambda\in \Gamma_{\ep,\lambda_0,\zeta},$ we have 
\begin{align*}
|\arg A | &= \big| \arg (z_1 + |\xi'|^2 )| \slash 2 \leq |\arg z_1|\slash 2 \leq (\pi-\ep') \slash 2,\\
|\arg B | &= \big| \arg (\alpha^{-1}\lambda + |\xi'|^2 )| \slash 2 \leq |\arg \lambda| \slash 2 \leq (\pi-\ep) \slash 2.
\end{align*}
Then $|\arg (AB)| = |\arg A+  \arg B| < \pi -\ep_0$ for any $\ep_0 < (\ep+\ep') \slash 2.$
\end{proof}

Next, we pick up several standard results on $\CR$-boundedness  (see \cite{ES2013,GS2014, Shi2019} for instance).
\begin{lemm}\label{lemma:basic_es}
Under the same assumptions in Lemma \ref{lemma:ABL},
we take $n_1(\lambda,\xi') \in \BM_{-2,1} (\wt\Gamma_{\ep,\lambda_0,\zeta}),$ 
and consider the operators
\begin{align*}
\Psi_1(\lambda) f (x)&:=\int_0^{\infty}   \CF^{-1}_{\xi'}\Big[ n_1(\lambda,\xi') 
B e^{-B(x_{_N}+y_{_N})}\CF_{y'}[f](\xi',y_{_N})\Big] (x')  \,d y_{_N},\\
\Psi_2(\lambda) f (x) &:=\int_0^{\infty}   \CF^{-1}_{\xi'}\Big[ n_1(\lambda,\xi') 
B^2 \CM (x_{_N}+y_{_N}) \CF_{y'}[f](\xi',y_{_N})\Big] (x')  \,d y_{_N}.
\end{align*}
Then we have
\begin{equation*}
\CR_{\CL \big(L_q(\BBR_+^{N});H^{2-j}_q(\BBR^N_+)\big)} 
\big( \{(\tau \pa_{\tau})^{\ell}\ \lambda^{j\slash 2}\Psi_k(\lambda): \lambda \in \Gamma_{\ep,\lambda_0,\zeta} \} \big) \leq r_k,
\end{equation*}
 for $j=0,1,2,$ $k=1,2,$ and $\ell =0,1.$ Above the constants $r_1, r_2$ depend on  $M(n_1,\Gamma_{\ep,\lambda_0,\zeta}),$ $N,$ $q,$ $\ep,$ $\mu,$ $\lambda_0,$ $\rho_1,$ $\rho_2.$
\end{lemm}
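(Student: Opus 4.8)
The plan is to differentiate under the integral sign, collapse everything — via the multiplier algebra — to a short list of \emph{normalized} half-space operators carrying a bounded symbol factor, and then invoke the $\CR$-boundedness of those operators, which is the standard machinery of the Shibata school (cf. \cite{ES2013,GS2014,Shi2019}). Since $\|\cdot\|_{H^{2-j}_q(\BBR^N_+)}$ is equivalent to $\sum_{|\alpha|\le 2-j}\|\pa^\alpha\cdot\|_{L_q(\BBR^N_+)}$ and $\CR$-bounds add over finite sums (Remark \ref{rmk:R-bounded}), it suffices to control $\{(\tau\pa_\tau)^\ell\lambda^{j\slash2}\pa^\alpha_x\Psi_k(\lambda):\lambda\in\Gamma_{\ep,\lambda_0,\zeta}\}$ in $\CL\big(L_q(\BBR^N_+)\big)$ for $j+|\alpha|\le2$. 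Each horizontal derivative $\pa_{x_\iota}$ ($\iota<N$) inserts a factor $i\xi_\iota$ into the symbol, while $\pa_{x_{_N}}$ acts on the kernels through $\pa_{x_{_N}}e^{-Bx_{_N}}=-Be^{-Bx_{_N}}$ and, by \eqref{eq:AB},
\begin{equation*}
\pa_{x_{_N}}\CM(x_{_N})=-B\,\CM(x_{_N})-e^{-Ax_{_N}}=-A\,\CM(x_{_N})-e^{-Bx_{_N}}.
\end{equation*}
Iterating, $\lambda^{j\slash2}\pa^\alpha_x\Psi_k(\lambda)$ becomes a finite linear combination of operators
\begin{equation*}
\Upsilon(\lambda)g(x):=\int_0^\infty\CF^{-1}_{\xi'}\big[\,m(\lambda,\xi')\,\CE(x_{_N}+y_{_N})\,\CF_{y'}[g](\xi',y_{_N})\,\big](x')\,dy_{_N},
\end{equation*}
where $\CE$ is one of the kernels $s\mapsto C_+e^{-C_+s}$ or $s\mapsto C_+^{2}\CM(s)$ with $C_+\in\{A,B\}$, and — after collecting $n_1\in\BM_{-2,1}$, the monomials $\xi^{\alpha'}$, the factor $\lambda^{j\slash2}$ and the powers of $A,B$ produced above, and using Remark \ref{rmk:M1M2} together with Lemma \ref{lemma:ABL} and Lemma \ref{lemma:int_AB} — the residual symbol satisfies $m\in\BM_{0,1}(\wt\Gamma_{\ep,\lambda_0,\zeta})$ with $M(m,\Gamma_{\ep,\lambda_0,\zeta})$ bounded in terms of $N,\ep,\mu,\nu,\lambda_0,\zeta_0,\rho_1,\rho_2,\rho_3$ alone. (The point of the budget $j+|\alpha|\le2$ is precisely that $n_1$ has order $-2$: two derivatives/half-powers of $\lambda$ raise the order by at most $2$, and the remaining power of $C_+$ sitting in $\CE$ is the one absorbed by the $y_{_N}$-integration below.)

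For the terms with $\CE(s)=C_+^2\CM(s)$ I would first discard the difference quotient via the mean-value representation $\CM(s)=\int_0^1 s\,e^{-(tB+(1-t)A)s}\,dt$ $(s>0)$, valid by the fundamental theorem of calculus applied to $t\mapsto e^{-(tB+(1-t)A)s}$ (the limiting case $A=B$ being the integrand itself). By Lemma \ref{lemma:basic} and Lemma \ref{lemma:ABL}, $tB+(1-t)A$ retains, uniformly in $t\in[0,1]$, the sectoriality and the two-sided bound $\sim(|\lambda|^{1\slash2}+|\xi'|)$ of $A$ and $B$; moreover $s\,e^{-c(|\lambda|^{1\slash2}+|\xi'|)s}\le C(|\lambda|^{1\slash2}+|\xi'|)^{-1}e^{-c's}$, so $C_+^2\CM(s)$ behaves, order-wise, like $C_+e^{-C_+s}$. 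Since $\CR$-boundedness survives averaging over $t\in[0,1]$ with uniform-in-$t$ bounds (immediate from the defining inequality), the $\CM$-operators reduce to the exponential-kernel case.

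For $\CE(s)=C_+e^{-C_+s}$, one views the $y_{_N}$-integration as a symbol with values in $\CL\big(L_q(\BBR_+)\big)$, namely $[\CK(\lambda,\xi')\psi](x_{_N}):=\int_0^\infty C_+e^{-C_+(x_{_N}+y_{_N})}\psi(y_{_N})\,dy_{_N}$. Lemma \ref{lemma:ABL} gives the kernel bound $\big|\pa_{\xi'}^{\kappa'}(\tau\pa_\tau)^\ell\big(C_+e^{-C_+(x_{_N}+y_{_N})}\big)\big|\le C(|\lambda|^{1\slash2}+|\xi'|)^{1-|\kappa'|}e^{-c'(|\lambda|^{1\slash2}+|\xi'|)(x_{_N}+y_{_N})}$; since $\int_0^\infty e^{-c'(|\lambda|^{1\slash2}+|\xi'|)(x_{_N}+y_{_N})}\,dy_{_N}\le C(|\lambda|^{1\slash2}+|\xi'|)^{-1}$, Schur's test shows that $\{|\xi'|^{|\kappa'|}\pa_{\xi'}^{\kappa'}(\tau\pa_\tau)^\ell\CK(\lambda,\xi')\}$ is uniformly bounded in $\CL(L_q(\BBR_+))$ for $(\lambda,\xi')\in\wt\Gamma_{\ep,\lambda_0,\zeta}$ and $|\kappa'|\le N$, and moreover $\CR$-bounded there — this last assertion being precisely the half-space estimate recorded in \cite{ES2013,GS2014,Shi2019}, whose proof uses the square-function characterization of Remark \ref{rmk:R-bounded}(4), Minkowski's integral inequality in the $y_{_N}$-variable, and the uniform decay in $x_{_N}+y_{_N}$. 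Multiplying by the scalar $m\in\BM_{0,1}$ preserves these bounds, and feeding $m(\lambda,\cdot)\CK(\lambda,\cdot)$ into the operator-valued Fourier-multiplier theorem of Weis (Theorem \ref{thm:Weis}, in its $(N-1)$-dimensional form, applied in $x'$; $L_q(\BBR_+)$ is UMD) yields the $\CR$-boundedness of $\{(\tau\pa_\tau)^\ell\Upsilon(\lambda)\}$ in $\CL\big(L_q(\BBR^{N-1};L_q(\BBR_+))\big)=\CL\big(L_q(\BBR^N_+)\big)$, with bound governed by $M(m,\Gamma_{\ep,\lambda_0,\zeta})$ and the listed parameters. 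Assembling the three steps delivers the claimed estimates for $\Psi_1$ and $\Psi_2$.

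I expect the genuine difficulty to be exactly this last step — upgrading the uniform operator-\emph{norm} control of $\CK(\lambda,\xi')$ and its $\xi'$-derivatives to the \emph{$\CR$-bounded} operator-valued Mikhlin condition required by Theorem \ref{thm:Weis} — which is where the type-$1$ gain $(|\lambda|^{1\slash2}+|\xi'|)^{-|\kappa'|}$ per $\xi'$-derivative and the exponential decay in $x_{_N}+y_{_N}$ must be exploited simultaneously; relatedly, $\CM$ cannot be differentiated in $\xi'$ naively because $A-B$ admits no lower bound on $\wt\Gamma_{\ep,\lambda_0,\zeta}$, which is exactly why the averaging representation of the second step is indispensable. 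The remaining bookkeeping — differentiation under the integral, the order count, the multiplier algebra — is routine given Lemmas \ref{lemma:basic}, \ref{lemma:ABL} and \ref{lemma:int_AB} and Remark \ref{rmk:M1M2}.
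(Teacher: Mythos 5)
Your first three steps are sound: the reduction to $L_q$-operator norms by differentiation under the integral, the multiplier-algebra bookkeeping giving $m\in\BM_{0,1}(\wt\Gamma_{\ep,\lambda_0,\zeta})$, and the averaging representation of $\CM$ (modulo a sign — since $\tfrac{d}{dt}e^{-(tB+(1-t)A)s}=-(B-A)\,s\,e^{-(tB+(1-t)A)s}$, the correct identity is $\CM(s)=-s\int_0^1 e^{-(tB+(1-t)A)s}\,dt$, which of course changes nothing). These mirror the standard reduction.

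The final step, however, has a genuine gap. Theorem \ref{thm:Weis} as stated converts an $\CR$-bounded Mikhlin symbol into a \emph{single} bounded multiplier operator $T_M$; it does not by itself deliver $\CR$-boundedness of the $\lambda$-indexed \emph{family} $\{T_{M_\lambda}\}_{\lambda}$, which is what Lemma \ref{lemma:basic_es} asserts. To make the multiplier-theorem route close, you would need the $\CR$-bounded operator-valued Fourier multiplier theorem — the variant in which the $\CR$-bound of the symbol family propagates to an $\CR$-bound of the output operator family — and this strengthened statement requires both Banach spaces to have Pisier's property $(\alpha)$. The hypothesis is satisfied here ($L_q$ spaces have $(\alpha)$), so the strategy is not doomed, but the stronger theorem is not among the tools set up in the paper and would have to be imported and verified. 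Moreover, invoking \cite{ES2013,GS2014,Shi2019} for the $\CR$-boundedness of the operator-valued symbol family $\{|\xi'|^{|\kappa'|}\pa_{\xi'}^{\kappa'}(\tau\pa_\tau)^\ell\CK(\lambda,\xi')\}$ in $\CL\big(L_q(\BBR_+)\big)$ is essentially circular: what those references establish \emph{is} Lemma \ref{lemma:basic_es} itself, not this auxiliary symbol-level claim, and Schur's test alone gives only uniform operator-norm bounds, which is strictly weaker than $\CR$-boundedness in general.

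The route actually taken in those references sidesteps the $(\alpha)$-machinery entirely: one works with the square-function characterization of $\CR$-boundedness on $L_q$ (Remark \ref{rmk:R-bounded}(4)) together with explicit $x'$-kernel estimates of the form
\begin{equation*}
\big|\CF^{-1}_{\xi'}\big[m(\lambda,\xi')\,B\,e^{-B(x_{_N}+y_{_N})}\big](x')\big|
\le C\big(|x'|^2+(x_{_N}+y_{_N})^2\big)^{-N\slash 2},
\end{equation*}
obtained from the type-$1$ symbol decay by iterated integration by parts in $\xi'$, Minkowski's integral inequality in $y_{_N}$, the $L_q$-boundedness of the Hardy-type averaging operator $g\mapsto\int_0^\infty g(y_{_N})(x_{_N}+y_{_N})^{-1}\,dy_{_N}$, and an $\ell^2$-valued Calder\'on--Zygmund estimate for the $x'$-integration. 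In short, the real work lies in the kernel estimate and the square-function step, not only in the symbol-level $\CR$-boundedness you flagged; as written, your appeal to Theorem \ref{thm:Weis} does not reach the conclusion.
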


\begin{lemm}\label{lemma:basic_es_2}
Assume that $n_2(\lambda,\xi')\in \BM_{0,2} \big(\Lambda \times (\BBR^{N-1} \backslash \{0\}) \big)$
for some domain $\Lambda \subset \BBC,$ 
and the cut-off functions $\varphi,$ $\psi \in C_0^{\infty}(]-2,2[).$ 
Consider the operators
\begin{align*}
\Psi_3(\lambda) f &:=\varphi(x_{_N}) \int_0^{\infty} \CF^{-1}_{\xi'}\Big[ n_2(\lambda,\xi') 
e^{-|\xi'|(x_{_N}+y_{_N})} \psi(y_{_N}) \CF_{y'}[f](\xi',y_{_N})\Big] (x')  \,d y_{_N},\\
\Psi_4(\lambda) f &:=\varphi(x_{_N}) \int_0^{\infty} \CF^{-1}_{\xi'}\Big[ n_2(\lambda,\xi') 
|\xi'|e^{-|\xi'|(x_{_N}+y_{_N})}\psi(y_{_N})\CF_{y'}[f](\xi',y_{_N})\Big] (x')  \,d y_{_N}.
\end{align*}
Then we have
\begin{equation*}
\CR_{\CL \big(L_q(\BBR_+^{N})\big)} 
\big( \{(\tau \pa_{\tau})^{\ell} \Psi_k(\lambda): \lambda \in \Lambda \} \big) \leq r_k,
\end{equation*}
 for $k=3,4,$ $\ell =0,1$ and some constants $r_3,r_4$ depending on  $M(n_2,\Lambda),$ $\varphi,\psi,$ $N,q.$
\end{lemm}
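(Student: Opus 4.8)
The plan is to reduce Lemma~\ref{lemma:basic_es_2} to a single $\CR$-bounded Fourier-multiplier estimate on $L_q(\BBR^{N-1})$, uniform in $\lambda\in\Lambda$ and in the ``depth'' $t:=x_{_N}+y_{_N}$, and then to recombine the $x_{_N}$- and $y_{_N}$-directions by Minkowski's inequality, with the compact supports of $\varphi,\psi$ playing a decisive role. Set $a:=0$ if $k=3$ and $a:=1$ if $k=4$, and for $\lambda\in\Lambda$, $t>0$ let $T^{a}_{\lambda,t}$ denote the Fourier multiplier operator on $L_q(\BBR^{N-1})$ with symbol $t^{a}\,n_2(\lambda,\xi')\,|\xi'|^{a}e^{-|\xi'|t}$, so that
\[
\Psi_k(\lambda)f(x',x_{_N})=\varphi(x_{_N})\int_0^{\infty}(x_{_N}+y_{_N})^{-a}\,T^{a}_{\lambda,\,x_{_N}+y_{_N}}\big[\psi(y_{_N})f(\cdot,y_{_N})\big](x')\,dy_{_N}.
\]

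First I would dispose of the operator $(\tau\pa_\tau)^{\ell}$. Since $|\xi'|^{a}e^{-|\xi'|t}$, $\varphi$ and $\psi$ are independent of $\lambda$, the differentiation in $\lambda$ acts only on the symbol $n_2$, and by the very definition of the multiplier class $(\tau\pa_\tau)^{\ell}n_2\in\BM_{0,2}$ with $M\big((\tau\pa_\tau)^{\ell}n_2,\Lambda\big)\le M(n_2,\Lambda)$; hence it suffices to treat $\ell=0$. Next, the elementary bound $\big|\pa_{\xi'}^{\kappa'}\big(|\xi'|^{a}e^{-|\xi'|t}\big)\big|\le C_{\kappa',a}\,t^{-a}\,|\xi'|^{-|\kappa'|}$ -- obtained by writing $|\xi'|^{a}e^{-|\xi'|t}=t^{-a}F(t|\xi'|)$ with $F(s)=s^{a}e^{-s}$, using that $s\mapsto s^{j}F^{(j)}(s)$ is bounded on $(0,\infty)$ for $a\in\{0,1\}$, and using the homogeneity of the derivatives of $\xi'\mapsto|\xi'|$ -- together with Leibniz's rule shows that the symbol of $T^{a}_{\lambda,t}$ obeys the Mikhlin-type estimates $\big|\pa_{\xi'}^{\kappa'}\big\{t^{a}n_2(\lambda,\xi')|\xi'|^{a}e^{-|\xi'|t}\big\}\big|\le C\,M(n_2,\Lambda)\,|\xi'|^{-|\kappa'|}$ for all $|\kappa'|\le N$, uniformly in $(\lambda,t)\in\Lambda\times(0,\infty)$. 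By the $\CR$-bounded Mikhlin multiplier theorem on $\BBR^{N-1}$ (in the form used in \cite{ES2013,GS2014}, or, alternatively, via Remark~\ref{rmk:R-bounded}(4) and the classical vector-valued Mikhlin theorem), the family $\{T^{a}_{\lambda,t}:\lambda\in\Lambda,\ t>0\}$ is then $\CR$-bounded in $\CL\big(L_q(\BBR^{N-1})\big)$ with $\CR$-bound $\le C_{N,q}\,M(n_2,\Lambda)$.

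Then I would pass from $\BBR^{N-1}$ to $\BBR^{N}_{+}$ via the square-function characterization of $\CR$-boundedness (Remark~\ref{rmk:R-bounded}(4)). Fix $\lambda_1,\dots,\lambda_{n_0}\in\Lambda$ and $f_1,\dots,f_{n_0}\in L_q(\BBR^N_+)$, and put $F(y_{_N}):=\big\|\big(\sum_{i}|f_i(\cdot,y_{_N})|^{2}\big)^{1/2}\big\|_{L_q(\BBR^{N-1})}$, so that $\|F\|_{L_q(\BBR_+)}=\big\|\big(\sum_{i}|f_i|^{2}\big)^{1/2}\big\|_{L_q(\BBR^N_+)}$ by Fubini. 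Using the triangle inequality for the $\ell^{2}_i$-norm under the $dy_{_N}$-integral, then Minkowski's integral inequality in $L_q(\BBR^{N-1}_{x'})$, and then the square-function estimate of the previous step applied (with $x_{_N},y_{_N}$ frozen) to the operators $T^{a}_{\lambda_i,\,x_{_N}+y_{_N}}$, one reaches
\[
\Big\|\big(\sum_{i}|\Psi_k(\lambda_i)f_i|^{2}\big)^{1/2}\Big\|_{L_q(\BBR^N_+)}\le C_{N,q}\,M(n_2,\Lambda)\,\|\varphi\|_{L_\infty}\,\Big\|\int_0^{\infty}(x_{_N}+y_{_N})^{-a}|\psi(y_{_N})|\,F(y_{_N})\,dy_{_N}\Big\|_{L_q((0,2)_{x_{_N}})}.
\]
For $k=3$ ($a=0$) the last factor is $\le 2\,\|\psi\|_{L_\infty}\|F\|_{L_q(\BBR_+)}$ by Hölder's inequality on the bounded interval $(0,2)$; for $k=4$ ($a=1$) it is $\le C_q\,\|\psi\|_{L_\infty}\|F\|_{L_q(\BBR_+)}$ because the integral operator with kernel $(x_{_N}+y_{_N})^{-1}$ is bounded on $L_q(\BBR_+)$ (Hardy--Hilbert inequality, or Schur's test). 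By Remark~\ref{rmk:R-bounded}(4) this yields the claimed $\CR$-boundedness with $r_k\le C(N,q,\varphi,\psi)\,M(n_2,\Lambda)$, and re-running the whole argument with $(\tau\pa_\tau)n_2$ in place of $n_2$ settles the case $\ell=1$.

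The hard part will be the passage from the $(N-1)$- to the $N$-dimensional estimate: the coupling $t=x_{_N}+y_{_N}$ between the output variable $x_{_N}$ and the integration variable $y_{_N}$ prevents writing $\Psi_k(\lambda)$ as a Bochner integral of bounded operators on $L_q(\BBR^N_+)$ -- a ``slice at fixed $y_{_N}$'' would involve an unbounded trace operator -- so one is forced to keep the $dy_{_N}$-integral intact and to invoke the $\CR$-bound of $\{T^{a}_{\lambda,t}\}$ only pointwise in $(x_{_N},y_{_N})$, which is exactly what the displayed computation does. The second, and more delicate, point arises for $k=4$: there the kernel $(x_{_N}+y_{_N})^{-1}$ fails to be absolutely integrable near the corner of $(0,2)^{2}$, yet still defines a bounded operator on $L_q(\BBR_+)$; this is precisely where the compact supports of $\varphi$ and $\psi$ are essential -- without them the $x_{_N}$-integration over $(0,\infty)$ would already diverge -- and where a Hardy/Hilbert-type inequality is needed. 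Everything else is Leibniz's rule and the standard ($\CR$-bounded) multiplier theorem in $\BBR^{N-1}$.
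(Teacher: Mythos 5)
The paper states Lemma~\ref{lemma:basic_es_2} without proof, attributing it to the standard references \cite{ES2013,GS2014,Shi2019}, and your argument reproduces precisely the standard proof used there: uniform Mikhlin bounds on the horizontal symbol $t^{a}n_{2}(\lambda,\xi')|\xi'|^{a}e^{-|\xi'|t}$ imply $\CR$-boundedness of the family $\{T^{a}_{\lambda,t}\}$ on $L_q(\BBR^{N-1})$ via Remark~\ref{rmk:R-bounded}(4), after which Minkowski's inequality together with the compact supports of $\varphi,\psi$ (for $k=3$) and the Hardy--Hilbert inequality for the kernel $(x_{_N}+y_{_N})^{-1}$ (for $k=4$) complete the passage to $L_q(\BBR^{N}_{+})$. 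Your proof is correct; the only slip is a side remark at the end -- the Hilbert kernel is already bounded on $L_q(\BBR_+)$ without any truncation, so the compact supports are genuinely needed for the $y_{_N}$-integral in the case $k=3$ (where otherwise one would require $F\in L_1$ rather than $L_q$), not for $k=4$.
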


Finally, recall $N(A,B)$ in \eqref{eq:GR_half_h_1}, and we shall see  $N(A,B)^{-1}$ is well defined by choosing suitable $\Gamma_{\ep,\lambda_0,\zeta}$ in the next lemma.
\begin{lemm}\label{lemma:N_AB}
Under the same assumptions in Lemma \ref{lemma:ABL},
there exist some $\lambda_0 \geq 1$ and $C_{\kappa'}>0$ such that 
\begin{equation}\label{es:N_AB}
\big| \pa_{\xi'}^{\kappa'}  (\tau \pa_\tau)^\ell \big( N(A,B)^{-1}\big) \big| 
\leq C_{\kappa'} (|\lambda|+|\xi'|)^{-1} ( |\lambda|^{1\slash 2} +|\xi'|)^{-2-|\kappa'|},
\end{equation}
for all $\lambda \in \Gamma_{\ep,\lambda_0,\zeta},$ $\kappa'\in \BBN_0^{N-1}$ and $\ell =0,1.$ 
Here $\lambda_0$ and $C_{\kappa'}$ depend on $\sigma,m,\ep,\mu,\nu,\zeta_0,\rho_1, \rho_2, \rho_3.$
\end{lemm}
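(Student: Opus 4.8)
The plan is to reduce \eqref{es:N_AB} to a single pointwise lower bound for $|N(A,B)|$, after which the symbol estimate follows by routine bookkeeping. First I would show that there are $\lambda_0\geq 1$ and $c>0$ with
\[
|N(A,B)| \;\geq\; c\,(|\lambda|+|\xi'|)\,(|\lambda|^{1\slash 2}+|\xi'|)^{2},
\qquad (\lambda,\xi')\in \wt\Gamma_{\ep,\lambda_0,\zeta}.
\]
Granting this, $N(A,B)=\lambda\det\BBL+\sigma L_{11}(m+|\xi'|^{2})$ is assembled from $A,B,L_{jk}$ (controlled by Lemma \ref{lemma:ABL}) together with $\lambda$ (with $\pa_{\xi'}\lambda=0$, $|(\tau\pa_{\tau})^{\ell}\lambda|\leq|\lambda|$) and $m+|\xi'|^{2}$ (with $|\pa_{\xi'}^{\kappa'}(m+|\xi'|^{2})|\leq C(|\lambda|^{1\slash 2}+|\xi'|)^{2-|\kappa'|}$ once $\lambda_0\geq m$); hence the Leibniz rule gives
\[
\big|\pa_{\xi'}^{\kappa'}(\tau\pa_{\tau})^{\ell}N(A,B)\big|\;\leq\; C_{\kappa'}\,(|\lambda|+|\xi'|)\,(|\lambda|^{1\slash 2}+|\xi'|)^{2-|\kappa'|}\qquad\text{on }\wt\Gamma_{\ep,\lambda_0,\zeta}.
\]
Expanding $\pa_{\xi'}^{\kappa'}(\tau\pa_{\tau})^{\ell}\big(N(A,B)^{-1}\big)$ by the Fa\`a di Bruno / quotient formula into finitely many terms $\big(\prod_i\pa_{\xi'}^{\beta_i}(\tau\pa_{\tau})^{\ell_i}N\big)\slash N^{\,p+1}$ with $\sum_i\beta_i=\kappa'$, and inserting the last two displays, each term is $\leq C_{\kappa'}(|\lambda|+|\xi'|)^{-1}(|\lambda|^{1\slash 2}+|\xi'|)^{-2-|\kappa'|}$, which is precisely \eqref{es:N_AB}. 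This is the same bookkeeping as in the proofs of Lemma \ref{lemma:ABL}(3) and Lemma \ref{lemma:int_AB}.

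For the lower bound I would factor out $L_{11}$. Write $A^{2}=a_1\lambda+|\xi'|^{2}$, $B^{2}=a_2\lambda+|\xi'|^{2}$ with $a_1:=(2\alpha+\beta+\zeta)^{-1}$ and $a_2:=\alpha^{-1}$; then $B^{2}-|\xi'|^{2}=a_2\lambda$ gives $L_{11}=\lambda A/(AB-|\xi'|^{2})$, hence
\[
N(A,B)\;=\;L_{11}\Big(\lambda\,\frac{\det\BBL}{L_{11}}+\sigma(m+|\xi'|^{2})\Big).
\]
From the identity $AB-|\xi'|^{2}=\lambda\big(a_1a_2\lambda+(a_1+a_2)|\xi'|^{2}\big)/(AB+|\xi'|^{2})$ together with Lemma \ref{lemma:ABL} and Lemma \ref{lemma:int_AB}, one checks $|AB-|\xi'|^{2}|\asymp|\lambda|$ on $\wt\Gamma_{\ep,\lambda_0,\zeta}$, so $|L_{11}|\asymp|\lambda|^{1\slash 2}+|\xi'|$ (the upper bound being in Lemma \ref{lemma:ABL}) and $\det\BBL/L_{11}\in\BM_{1,1}(\wt\Gamma_{\ep,\lambda_0,\zeta})$. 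The crucial step is: for $\lambda_0$ large, $\det\BBL/L_{11}$ takes values in a fixed sector $\Sigma_{\theta_0}$ with $\theta_0<\ep^{\ast}$, where $\ep^{\ast}\in(0,\pi)$ is any angle with $\Gamma_{\ep,\lambda_0,\zeta}\subset\Sigma_{\ep^{\ast}}$ (one may take $\ep^{\ast}=\ep$ in case (C1), and $\ep^{\ast}>\pi\slash 2$ in cases (C2)--(C3), by \eqref{eq:Gamma} and Lemma \ref{lemma:basic}(2)). Granting this, $\lambda\,(\det\BBL/L_{11})$ lies in $\Sigma_{\ep^{\ast}-\theta_0}$, so $\lambda\,(\det\BBL/L_{11})+\sigma(m+|\xi'|^{2})$ is the sum of an element of a fixed sector $\Sigma_{\ep'}$ and the positive real $\sigma(m+|\xi'|^{2})$; the elementary bound ``$|w_1+w_2|\geq c(|w_1|+|w_2|)$ for $w_1\in\Sigma_{\ep'},w_2>0$'' (proved as in Lemma \ref{lemma:basic}(1)), with $|\det\BBL/L_{11}|\gtrsim|\lambda|^{1\slash 2}+|\xi'|$ and $|\lambda|\geq 1$, gives
\[
\Big|\lambda\,\frac{\det\BBL}{L_{11}}+\sigma(m+|\xi'|^{2})\Big|\;\gtrsim\;|\lambda|\,(|\lambda|^{1\slash 2}+|\xi'|)+|\xi'|^{2}\;\gtrsim\;(|\lambda|+|\xi'|)(|\lambda|^{1\slash 2}+|\xi'|),
\]
and multiplying by $|L_{11}|\asymp|\lambda|^{1\slash 2}+|\xi'|$ yields the desired lower bound.

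The sector property of $\det\BBL/L_{11}$ is the main obstacle, and is precisely where the shape of $\Gamma_{\ep,\lambda_0,\zeta}$ enters. I would prove it via the behaviour of $A,B,L_{jk}$ as $|\lambda|^{1\slash 2}+|\xi'|\to\infty$: when $|\xi'|^{2}\lesssim|\lambda|$, the term $\lambda\det\BBL$ already dominates $\sigma L_{11}(m+|\xi'|^{2})$ by Lemma \ref{lemma:ABL}, so the lower bound holds there directly and the sector claim is not needed; when $|\xi'|^{2}\gtrsim|\lambda|$, Taylor expansion in $\lambda/|\xi'|^{2}$ gives $A=|\xi'|+\tfrac{a_1\lambda}{2|\xi'|}+O(|\lambda|^{2}/|\xi'|^{3})$ and similarly for $B$, and since every denominator in $L_{jk}$ is $\asymp|\lambda|$, this propagates to
\[
\frac{\det\BBL}{L_{11}}\;=\;\frac{2\alpha(\alpha+\beta+\zeta)}{2\alpha+\beta+\zeta}\,|\xi'|\,\big(1+O(\lambda_0^{-1})\big).
\]
Because $2\alpha+\beta+\zeta=(\alpha+\beta+\zeta)+\alpha$ with $\alpha>0$, the numbers $\alpha+\beta+\zeta$ and $2\alpha+\beta+\zeta$ have imaginary parts of one and the same sign with $|\arg(2\alpha+\beta+\zeta)|\leq|\arg(\alpha+\beta+\zeta)|$, so $\arg(\det\BBL/L_{11})$ equals a fixed $\theta_0$ up to $O(\lambda_0^{-1})$; the hypotheses on $\zeta$ and the definition \eqref{eq:Gamma} — which removes a sector around the ray $\lambda=-\tfrac{\sigma(2\alpha+\beta+\zeta)}{2\alpha(\alpha+\beta+\zeta)}|\xi'|$, essentially the negative real $\lambda$-axis, on which $N(A,B)$ vanishes to leading order — force $\theta_0<\ep^{\ast}$, and taking $\lambda_0$ large absorbs the remainder. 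The estimates on $A,B,\BBL$ used along the way are of exactly the type carried out in \cite{ES2013,GS2014}.
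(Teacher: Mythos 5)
Your overall strategy is the same as the paper's: reduce to a pointwise lower bound on $|N(A,B)|$, factor out $L_{11}$ (which is the same as the paper's factorization $N=P\,\wt N$, since $L_{11}=AP$, $\det\BBL=PD$ give $\det\BBL/L_{11}=D/A$ and hence $N=L_{11}(\lambda\det\BBL/L_{11}+\sigma(m+|\xi'|^2))=P\wt N$), then split into the two regimes $|\xi'|^2\lesssim|\lambda|$ and $|\xi'|^2\gtrsim|\lambda|$ and use a sector argument in the latter. The regime $|\xi'|^2\lesssim|\lambda|$ is handled essentially as in the paper (modulo making explicit that $\lambda_0$ must be taken large so that $|\lambda\det\BBL|$ dominates $\sigma L_{11}(m+|\xi'|^2)$).

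The gap is in the sector argument, which is the technical heart of the lemma (Step 2 of the paper's proof). You assert that $\det\BBL/L_{11}$ takes values in a fixed sector $\Sigma_{\theta_0}$ with $\theta_0<\ep^*$, and that this follows from ``$|\arg(2\alpha+\beta+\zeta)|\leq|\arg(\alpha+\beta+\zeta)|$'' together with the shape of $\Gamma_{\ep,\lambda_0,\zeta}$. This is not quite the right statement and it is not proved. First, a notational issue: in the paper's convention $\Sigma_{\theta_0}=\{z:|\arg z|\leq\pi-\theta_0\}$, so ``small $\theta_0$'' is a \emph{large} sector; what you actually need is $|\arg(\det\BBL/L_{11})|\leq\theta_0$ for a suitable $\theta_0$, i.e.\ a \emph{small} sector around the positive real axis. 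Second, and more substantively, $\theta_0:=|\arg(\alpha+\beta+\zeta)-\arg(2\alpha+\beta+\zeta)|$ is not uniformly small, and is not uniformly $<\ep^*$, over the admissible $\zeta$ in case (C2): for $\Re\zeta\in(-(2\alpha+\beta),-(\alpha+\beta))$ and $|\Im\zeta|$ small, $\arg(\alpha+\beta+\zeta)$ is close to $\pm\pi$ while $\arg(2\alpha+\beta+\zeta)$ is close to $0$, so $\theta_0$ can approach $\pi$. What actually saves the argument is the \emph{coupling} between $\zeta$ and $\arg\lambda$: the constraint $\zeta\in\Sigma_\ep$ bounds $|\Im\zeta|$ below in terms of $|\Re\zeta|$, and the definition of $\Gamma_{\ep,\lambda_0,\zeta}$ in case (C2) bounds $|\arg\lambda|$ above in terms of $|\Im\zeta/\Re\zeta|$; it is only the combined quantity $\bigl|\arg\bigl(\lambda\,\tfrac{\alpha+\beta+\zeta}{2\alpha+\beta+\zeta}\bigr)\bigr|$ that is uniformly $\leq\pi-\ep_0$, and establishing this requires the case analysis (on the sign and size of $\Re\zeta$ relative to $\alpha+\beta$ and $2\alpha+\beta$) that the paper carries out in Step~2. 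Your proposal acknowledges that ``the hypotheses on $\zeta$ and the definition \eqref{eq:Gamma}'' are what make it work, but presenting the conclusion as ``$\theta_0<\ep^*$ for a fixed $\theta_0$'' both misstates the claim and omits the case-by-case verification. (A minor further slip: the Taylor remainder in $\det\BBL/L_{11}=\tfrac{2\alpha(\alpha+\beta+\zeta)}{2\alpha+\beta+\zeta}|\xi'|(1+\cdots)$ is $O(r)$ with $r=|\lambda||\xi'|^{-2}$, controlled by taking the threshold $r$ small, not $O(\lambda_0^{-1})$; taking $\lambda_0$ large is what is used in the complementary regime.)
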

\begin{proof}
We will focus on the proof of \eqref{es:N_AB} with $\ell=0,$ from which the case $\ell =1$ will be derived without any difficulty. 
Now let us introduce
\begin{equation*}
P(\lambda, \xi') := \frac{\lambda}{AB-|\xi'|^2}
= \frac{\alpha(2\alpha+\beta+\zeta)}{3\alpha+\beta+\zeta}\frac{AB+|\xi'|^2}{(3\alpha+\beta+\zeta)^{-1}\lambda +|\xi'|^2}
\cdot
\end{equation*}
By the definition of $P,$ the matrix $\BBL$ and $\det \BBL$ are formulated by
\begin{gather}\label{eq:L_ij_2}
L_{11} := A P, \qquad
L_{12}:=  |\xi'|^2 (2\alpha-P), \\ \nonumber
L_{21}:=\Big(\frac{A}{A+B} -\frac{\beta +\zeta}{2\alpha +\beta +\zeta}\frac{B}{A+B}\Big)P,
\quad L_{22}:=BP,  \quad \det \BBL = P D(A,B),  \\ \nonumber
D(A,B):= AB P - |\xi'|^2 (2\alpha -P) \big(\frac{A}{A+B} -\frac{\beta +\zeta}{2\alpha +\beta +\zeta}\frac{B}{A+B}\big).
\end{gather}

\underline{Case: $\ell=0.$}
{\bf Step 1.} In order to show \eqref{es:N_AB} for $\ell =0,$ let us make some reduction.
Firstly, it is easy to see from the definition of $N(A,B)$ that 
\begin{equation*}
N(A,B) = L_{11} E_{\sigma,m}(\lambda,\xi') -\lambda L_{12}L_{21}
\end{equation*}
for $E_{\sigma,m}(\lambda,\xi') : = \lambda L_{22} +\sigma (m+|\xi'|^2).$
According to Lemma \ref{lemma:ABL} and Remark \ref{rmk:M1M2}, there exists a constant 
$C_{\kappa'} = C(\kappa',\ep,\mu,\nu,\zeta_0,\rho_1, \rho_2, \rho_3)>0$ such that
\begin{align*}
\big| \pa_{\xi'}^{\kappa'}  (\tau \pa_\tau)^{\ell'} (\lambda L_{12}L_{21}) \big| 
& \leq C_{\kappa'}  |\lambda| ( |\lambda|^{1\slash 2} +|\xi'|)^{2-|\kappa'|},\\
\big| \pa_{\xi'}^{\kappa'}  (\tau \pa_\tau)^{\ell'} E_{\sigma,m}(\lambda,\xi')  \big| 
& \leq C_{\kappa'} \max\{1,\sigma,\sigma m\} (|\lambda| +|\xi'|) ( |\lambda|^{1\slash 2} +|\xi'|)^{1-|\kappa'|},
\end{align*}
for any $(\lambda,\xi') \in \wt \Gamma_{\ep,1,\zeta},$ $\kappa'\in \BBN_0^{N-1}$ and $\ell'=0,1.$
Then it is easy to see from  Lemma \ref{lemma:ABL}  that
\begin{equation}\label{es:N_AB_upper}
\big| \pa_{\xi'}^{\kappa'}  (\tau \pa_\tau)^{\ell'} N(A,B) \big| 
\leq C_{\kappa'} \max\{1,\sigma,\sigma m\}   (|\lambda| +|\xi'|) ( |\lambda|^{1\slash 2} +|\xi'|)^{2-|\kappa'|},
\end{equation}
for any $(\lambda,\xi') \in \wt \Gamma_{\ep,1,\zeta},$ $\kappa' \in \BBN_0^{N-1}$ and $\ell'=0,1.$ 
\smallbreak 

On the other hand, by the Bell formula, we have
\begin{equation*}
\pa_{\xi'}^{\kappa'} \big( N(A,B)^{-1}\big) = \sum_{j =1}^{\kappa'} (-1)^j j ! \big(N(A,B)\big)^{-(j+1)} 
\sum_{ \substack{\kappa'_1 +\cdots +\kappa'_j =\kappa'\\|\kappa'_k| \geq 1}} C^j_{\kappa'_1,\dots \kappa'_j} 
\big( \pa_{\xi'}^{\kappa'_1}   N(A,B)\big) \cdots  \big( \pa_{\xi'}^{\kappa'_j}   N(A,B)\big).
\end{equation*}
Then \eqref{es:N_AB} with $\ell =0$  holds true so long as
\begin{equation}\label{es:N_AB_low}
|N(A,B)| \geq c (|\lambda| +|\xi'|) (|\lambda|^{1\slash 2} +|\xi'|)^2
\end{equation}
for some constant $c,$ because the derivatives of $N(A,B)$ is bounded by \eqref{es:N_AB_upper}.
Furthermore, thanks to the rules in \eqref{eq:L_ij_2}, $N(A,B)= P \wt{N}(A,B)$ with
\begin{equation*}
 \wt{N}(A,B) := \lambda D(A,B)+ \sigma A (m +|\xi'|^2).
\end{equation*}
By Lemma \ref{lemma:basic} and  Lemma \ref{lemma:int_AB}, 
$|P(\lambda,\xi')| \geq C_{\ep,\mu,\nu,\zeta_0,\rho_1,\rho_2,\rho_3}$ for any 
$(\lambda,\xi') \in \wt \Gamma_{\ep,1,\zeta}.$ 
Therefore, it is sufficient to show that $\wt N(A,B)$ is bounded below by the r.h.s. of \eqref{es:N_AB_low}.
\medskip

{\bf Step 2.} In order to study $\wt N(A,B),$ we need the following technical results.
There exist $\lambda_0' \geq 1$ and $\ep_0 \in ]0,\pi\slash 2[$ such that 
\begin{equation}\label{eq:z_N_AB}
z:= (\alpha+\beta+\zeta)(2\alpha+\beta+\zeta)^{-1}\lambda \in \Sigma_{\ep_0},
\,\,\, \forall \,\, (\lambda,\xi') \in \wt\Gamma_{\ep,\lambda_0',\zeta},
\end{equation}
with the choices of $\lambda_0'$ and $\ep_0$ depending only on $\ep,\mu,\nu,\zeta_0,\rho_1,\rho_2,\rho_3.$
\smallbreak 

For the Case (C1) where $|\zeta|=|\gamma_1^{-1}\gamma_3 \lambda| \leq \min\{ \rho_1^{-1}\rho_3 |\lambda|^{-1},\zeta_0\},$ 
we infer from Lemma \ref{lemma:basic} that
\begin{equation*}
(2\alpha + \beta +\zeta)^{-1} \lambda \in \Sigma_{\ep'},
\,\,\, \forall \,\, \lambda \in  \Gamma_{\ep,1,\zeta} 
\end{equation*}
for some $\ep'=\ep'(\ep,\mu,\nu,\zeta_0,\rho_1,\rho_2,\rho_3)$ in $]0,\pi\slash 2[.$ 
On the other hand, there exists $\ep''\in ]0,\ep'\slash 2[$ such that
$$|\arg (\alpha+\beta+\zeta)| \leq \ep'', \,\,\, \forall \,\, \lambda \in  \Gamma_{\ep,\lambda_0',\zeta},$$
by taking $\lambda_0'$ large enough. Thus $|\arg z| \leq \pi -\ep'-\ep''.$

For the Case (C2) where $|\arg \zeta|\in ]\pi \slash 2, \pi-\ep[,$ we denote $\omega:= (\alpha+\beta+\zeta)(2\alpha+\beta+\zeta)^{-1}.$ Assume that
\begin{equation*}
\Re \zeta \leq -(2\alpha +\beta) \,\,\,
\hbox{or}\,\,\,  -\alpha -\beta \leq  \Re \zeta <0,
\end{equation*}
where we have 
\begin{equation*}
|\arg w| \leq \arctan \frac{\alpha}{(\alpha + \beta)\tan \ep} = \arctan \frac{\mu}{(\mu + \nu)\tan \ep} \cdot
\end{equation*}
Then  $|\arg z| \leq |\arg w| + \pi \slash 2  \leq  \pi-\ep_0$ for some $\ep_0=\ep_0(\mu,\nu,\ep).$ 
For the situation $\Re \zeta \in ]-(2\alpha+\beta), -(\alpha+\beta)[,$ we note that $|\arg \lambda| \leq \pi -\arg \zeta$ and then conclude that
\begin{equation*}
|\arg z| =  |\arg w + \arg \lambda| \leq \pi - \arctan \frac{(\alpha+\beta)\tan \ep}{\alpha}
= \pi - \arctan (\mu^{-1}\nu \tan \ep).
\end{equation*}
At last, \eqref{eq:z_N_AB} is valid for the case (C3) as $|\arg w| <  \pi \slash 4$ and
$|\arg \lambda| < \pi\slash 2.$
\medskip

{\bf Step 3.} By assuming $|\lambda| |\xi'|^{-2} \leq r \ll 1,$ there exists some 
$c_1=c_1(\ep,\mu,\nu,\gamma_1,\gamma_2,\zeta_0,\sigma)>0$ such that 
\begin{equation}\label{es:N_AB_wt}
\big| \wt N(A,B) \big| \geq c_1 (|\lambda| +|\xi'|) (|\lambda|^{1\slash 2} +|\xi'|)^2,
\end{equation}
for any $\lambda\in \Gamma_{\ep, \lambda_0',\zeta}.$
By Lemma \ref{lemma:basic}, it is not hard to see that
\begin{equation*}
A, B=|\xi'|\big(1+O(r)\big)  \,\,\,\hbox{and}\,\,\,
P = \frac{2\alpha(2\alpha +\beta +\zeta)}{3\alpha +\beta +\zeta} +O(r).
\end{equation*}
Then \eqref{eq:z_N_AB} and Lemma \ref{lemma:basic} furnish that
\begin{align}\label{es:N_AB_wt_1}
\big| |\xi'|^{-2}\wt{N}(A,B) \big|
&=\Big|  \lambda \Big(\frac{2\alpha(\alpha+\beta+\zeta)}{2\alpha+\beta+\zeta} +O(r)\Big) 
+ \sigma \big(1+O(r) \big) \big( m|\xi'|^{-1} + |\xi'| \big) \Big|\\
& \geq \Big|\frac{2\alpha(\alpha+\beta+\zeta)}{2\alpha+\beta+\zeta}\lambda 
+\sigma \big( m|\xi'|^{-1} + |\xi'| \big) \Big|
- C_1 r \big(|\lambda|+\sigma m |\xi'|^{-1}  +\sigma |\xi'| \big) \notag \\ \notag
& \geq \sin \big(\frac{\ep_0}{2} \big) \Big( 2\alpha\Big|\frac{\alpha+\beta+\zeta}{2\alpha+\beta+\zeta}\Big| |\lambda| +\sigma (m|\xi'|^{-1} + |\xi'|) \Big) \\ \notag
& \quad - C_1 r \big(|\lambda|+\sigma m |\xi'|^{-1}  +\sigma |\xi'| \big) 
\geq C_2 \big(|\lambda|+\sigma m |\xi'|^{-1}  +\sigma |\xi'| \big),
\end{align}
for some $C_2 =C_2(\ep,\mu,\nu,\zeta_0,\rho_1,\rho_2,\rho_3)$ and any 
$(\lambda,\xi') \in \wt\Gamma_{\ep,\lambda_0',\zeta}.$
Due to smallness of $r$ and $|\lambda| \leq r |\xi'|^2,$ we have 
\begin{equation*}
(|\lambda| +|\xi'|) (|\lambda|^{1\slash 2} +|\xi'|)^2 
\leq 4 (|\xi'|^3  + |\lambda||\xi'|^2) \leq 4 (\min\{1,\sigma\})^{-1} (|\lambda| + \sigma |\xi'|^3).
\end{equation*}
Thus \eqref{es:N_AB_wt} holds with $c_1:= C_2 \min\{1,\sigma\} \slash 4.$
\medskip

{\bf Step 4.} For the fixed $r=r(\ep,\mu,\nu,\zeta_0,\rho_1,\rho_2,\rho_3)<1$ in the last step, 
\eqref{es:N_AB_wt} still holds true for $|\lambda|  > r |\xi'|^2,$ $\lambda \in \Gamma_{\ep,\lambda_0'',\zeta}$ and some $\lambda_0'' \geq 1.$
Indeed, thanks to the boundedness of $P$ and Lemma \ref{lemma:ABL}, there exists a constant $c_2$ such that 
\begin{equation*}
 |D(A,B)| = |P^{-1}\det \BBL|\geq c_2 (|\lambda|^{1\slash2} + |\xi'|)^2,
 \,\,\, \forall \,\, (\lambda,\xi') \in \wt\Gamma_{\ep,1,\zeta}.
\end{equation*}
As $|A| \leq C_3 (|\lambda|^{1\slash 2} +|\xi'|)$ for some constant $C_3=C_3(\ep,\mu,\nu,\rho_2),$ we have
\begin{align*}
\big| \wt N(A,B) \big| 
\geq c_2 |\lambda| ( |\lambda|^{1\slash 2} +|\xi'|)^2 -\sigma C_3 ( |\lambda|^{1\slash 2} +|\xi'|)(m + |\xi'|^2).
\end{align*}
Next,  set  $C_\sigma :=  \sigma (1 + 2C_3 \slash {c_2})$ and we claim that there exists $\lambda_0'' \geq 1$ such that
 \begin{equation}\label{eq:f_N_AB}
 f(\lambda,|\xi'|):=|\lambda| ( |\lambda|^{1\slash 2} +|\xi'|) - C_\sigma (m+ |\xi'|^2) \geq 0,
  \,\,\, \forall \,\, \lambda \in \wt\Gamma_{\ep,\lambda_0'',\zeta}.
 \end{equation}
Then \eqref{eq:f_N_AB} implies that 
\begin{align*}
|\wt N(A,B)| & \geq \frac{c_2}{2}  \big(  |\lambda| (|\lambda|^{1\slash 2} +|\xi'|)^2 
+ \sigma (m+|\xi'|^2) (|\lambda|^{1\slash 2} +|\xi'|) \big) \\
 & \geq \frac{c_2}{4} \min\{1,\sigma\} (|\lambda| +|\xi'|) (|\lambda|^{1\slash 2} +|\xi'|)^2,
  \,\,\, \forall \,\, \lambda \in \wt\Gamma_{\ep,\lambda_0'',\zeta}.
\end{align*}

For \eqref{eq:f_N_AB}, $f(\lambda,|\xi'|) \geq (\lambda_0'')^{3\slash 2} -C_\sigma m  + \big((r\lambda_0'')^{1\slash 2} -C_\sigma \big)  |\xi'|^2$ since $|\lambda| \geq \max\{r|\xi'|^2, \lambda_0'' \}.$
Thus \eqref{eq:f_N_AB} holds true by choosing $\lambda_0''$ large. 
Finally, we obtain the lower bound of $\wt N(A,B)$ for $\ell=0,$  by taking $\lambda_0 := \max\{\lambda_0',\lambda_0''\}.$
\medskip

\underline{Case: $\ell=1.$} In fact, the conclusion for $\ell =1$ is a straightforward result from above discussion.
Note that
$(\tau \pa_\tau) \big( N(A,B)^{-1} \big) = - N(A,B)^{-2}  \big(\tau \pa_\tau N(A,B)\big).$
Then our results for the case $\ell=0,$ Leibniz formula and \eqref{es:N_AB_upper} yield the desired bound of $\pa_{\xi'}^{\kappa'}(\tau \pa_\tau) \big( N(A,B)^{-1} \big) $ in \eqref{es:N_AB} for any $\kappa' \in \BBN_0^{N-1}.$
\end{proof}

Thanks to Lemma \ref{lemma:N_AB} above, we end up with the following result. 
\begin{coro}\label{coro:n_Jk}
Under the same assumptions in Lemma \ref{lemma:ABL}, there exists $\lambda_0$ depending solely on $\sigma,$ $m,$ $\ep,$ $\mu,$
$\nu,$ $\zeta_0,$ $\rho_1,$ $\rho_2,$ $\rho_3,$ such that
the symbols $n_{Jk}(\lambda,\xi')$ defined in \eqref{eq:n_Jk_half} belong to 
$\BM_{-2,1}(\wt\Gamma_{\ep,\lambda_0,\zeta})$ for all $J=1,\dots,N$ and $k=1,2.$  
\end{coro}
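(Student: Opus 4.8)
The plan is to observe that each symbol $n_{Jk}$ from \eqref{eq:n_Jk_half} is a finite product of a few elementary symbols whose multiplier classes are already known or immediate, and then to conclude by the multiplication rule $\BM_{s_1,1}\cdot\BM_{s_2,1}\subset\BM_{s_1+s_2,1}$ of Remark \ref{rmk:M1M2}(1) together with simple order counting. On the half space $\gamma_1,\gamma_3$ are constants and $|\zeta|\le\zeta_0$, so the scalar prefactors $\sigma$ and $\eta=\alpha^{-1}(\alpha+\beta+\zeta)$ are bounded in terms of the data; using that $\zeta\in\BM_{0,1}(\wt\Gamma_{\ep,\lambda_0,\zeta})$ (immediate from \eqref{hyp:half_space} and the definition of $\zeta$ in the cases (C1)--(C3)), they both lie in $\BM_{0,1}(\wt\Gamma_{\ep,\lambda_0,\zeta})$ and are therefore harmless.

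First I would collect the building blocks. From Lemma \ref{lemma:ABL} one has, on $\wt\Gamma_{\ep,\lambda_0,\zeta}$, that $A\in\BM_{1,1}$, $B^{-1}\in\BM_{-1,1}$, $L_{11}\in\BM_{1,1}$, $L_{12}\in\BM_{2,1}$, hence $L_{12}+BL_{11}\in\BM_{2,1}$, and $Q:=(|\xi'|^2-A^2)/(AB-|\xi'|^2)\in\BM_{0,1}$ --- this $Q$ is exactly the extra fraction appearing in $n_{j1}$ and $n_{N1}$. To these I would add two elementary remarks. The symbol $i\xi_j$ belongs to $\BM_{1,1}$ directly from the definition, since it is independent of $\lambda$, its $\xi'$-derivatives of order $\ge 2$ vanish, and $|\pa_{\xi'}^{\kappa'}(i\xi_j)|\le C(|\lambda|^{1\slash 2}+|\xi'|)^{1-|\kappa'|}$ for $|\kappa'|\le 1$. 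Moreover $(A+B)^{-1}\in\BM_{-1,1}$: because the arguments of $A$ and $B$ stay bounded away from $\pm\pi\slash 2$ on $\wt\Gamma_{\ep,\lambda_0,\zeta}$ (the estimates on $\arg A,\arg B$ used in the proof of Lemma \ref{lemma:int_AB}) one gets $|A+B|\ge\Re A+\Re B\ge c(|A|+|B|)\ge c'(|\lambda|^{1\slash 2}+|\xi'|)$, and feeding this lower bound together with $A,B\in\BM_{1,1}$ into the Bell formula (exactly as in the proof of Lemma \ref{lemma:N_AB}) gives the required bounds for all $\xi'$- and $(\tau\pa_\tau)$-derivatives of $(A+B)^{-1}$.

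Next I would restate Lemma \ref{lemma:N_AB} as a membership in a multiplier class. Let $\lambda_0\ge 1$ be the constant furnished by that lemma. Since every $\lambda\in\Gamma_{\ep,\lambda_0,\zeta}$ satisfies $|\lambda|\ge\lambda_0\ge 1$, a short case distinction ($|\xi'|\ge 1$ versus $|\xi'|<1$) gives $|\lambda|^{1\slash 2}+|\xi'|\le 2(|\lambda|+|\xi'|)$, hence $(|\lambda|+|\xi'|)^{-1}\le 2(|\lambda|^{1\slash 2}+|\xi'|)^{-1}$; plugging this into \eqref{es:N_AB} yields $N(A,B)^{-1}\in\BM_{-3,1}(\wt\Gamma_{\ep,\lambda_0,\zeta})$.

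Finally I would assemble the pieces. Reading off \eqref{eq:n_Jk_half},
\begin{align*}
n_{j1}&=-\sigma\eta\,(i\xi_j)(L_{12}+BL_{11})\,B^{-1}(A+B)^{-1}\,Q\,N(A,B)^{-1},\\
n_{j2}&=\ \sigma\,(i\xi_j)\,L_{11}\,B^{-1}\,N(A,B)^{-1},\\
n_{N1}&=\ \sigma\eta\,A\,(L_{12}+BL_{11})\,B^{-1}(A+B)^{-1}\,Q\,N(A,B)^{-1},\\
n_{N2}&=\ \sigma\,L_{11}\,N(A,B)^{-1}\qquad(j=1,\dots,N-1),
\end{align*}
and adding orders via Remark \ref{rmk:M1M2}(1): $n_{j1}$ and $n_{N1}$ have order $1+2-1-1+0-3=-2$, $n_{j2}$ has order $1+1-1-3=-2$, and $n_{N2}$ has order $1-3=-2$. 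Hence $n_{Jk}\in\BM_{-2,1}(\wt\Gamma_{\ep,\lambda_0,\zeta})$ for all $J=1,\dots,N$ and $k=1,2$, with $\lambda_0$ and the constants $M(n_{Jk},\Gamma_{\ep,\lambda_0,\zeta})$ depending only on $\sigma,m,\ep,\mu,\nu,\zeta_0,\rho_1,\rho_2,\rho_3$, as is the case for each factor. I expect no genuine obstacle in this argument: the analytic substance is already contained in Lemma \ref{lemma:N_AB}, and what remains is the bookkeeping with the multiplier algebra, the two elementary facts $i\xi_j\in\BM_{1,1}$ and $(A+B)^{-1}\in\BM_{-1,1}$, and the half-power comparison used to upgrade \eqref{es:N_AB} to $N(A,B)^{-1}\in\BM_{-3,1}$.
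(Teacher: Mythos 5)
Your argument is correct, and it is exactly the argument the paper's one-line proof ("applying directly Lemma \ref{lemma:ABL} and Remark \ref{rmk:M1M2}") alludes to and omits: factor each $n_{Jk}$ into $\sigma$, $\eta$, $i\xi_j$, $A$, $B^{-1}$, $(A+B)^{-1}$, $Q$, $L_{11}$, $L_{12}+BL_{11}$, $N(A,B)^{-1}$, assign each factor its multiplier class, and add orders. The two small supplements you supply --- that $i\xi_j\in\BM_{1,1}$, that $(A+B)^{-1}\in\BM_{-1,1}$ via $\Re A,\Re B\ge 0$ and the argument bounds from the proof of Lemma \ref{lemma:int_AB}, and the upgrade of \eqref{es:N_AB} to $N(A,B)^{-1}\in\BM_{-3,1}$ using $|\lambda|\ge\lambda_0\ge 1$ --- are precisely what a careful reader must check, and all of them are right; the $\lambda_0$ you take is the one furnished by Lemma \ref{lemma:N_AB}, consistent with the dependencies claimed in the corollary.
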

\begin{proof}
The study of symbols are nothing but applying directly Lemma \ref{lemma:ABL} and Remark \ref{rmk:M1M2}, and we omit the details here.
\end{proof}

\subsection{Proof of Proposition \ref{prop:GR_half_2}}
In this part, we are studying the operators $\CW(\lambda,\BBR^N_+)$ and $\CH(\lambda,\BBR^N_+)$ respectively according to different technical results in last subsection.  
For $Z \in \{B,|\xi'|\},$ let us recall the following equalities, namely the \emph{Volevich trick}, 
 \begin{align*}
e^{-Zx_{_N}} \wh{k}(\xi',0) 
=& \int_0^{\infty}Z e^{-Z(x_{_N}+y_{_N})}\wh{k}(\xi',y_{_N})  \,d y_{_N}
  - \int_0^{\infty}e^{-Z(x_{_N}+y_{_N})}\wh{\pa_{N} k}(\xi',y_{_N}) \,d y_{_N},\\
M(x_{_N})\wh{k}(\xi',0) 
= &\int_0^{\infty}e^{-B(x_{_N}+y_{_N})}\wh{k}(\xi',y_{_N})  \,d y_{_N}
+\int_0^{\infty}AM(x_{_N}+y_{_N})\wh{k}(\xi',y_{_N})  \,d y_{_N}\\
 & - \int_0^{\infty}M(x_{_N}+y_{_N}) \wh{\pa_{N} k}(\xi',y_{_N}) \,d y_{_N},
\end{align*}
where we have used $\pa_{_N} M(z_{_N})=-e^{-Bz_{_N}}-AM(z_{_N})$ for any $z_{_N}>0.$

\subsubsection{The $\CR-$boundedness of $\CW(\lambda,\BBR^N_+)$}
According to the identity $|\xi'|^2 =-\sum_{\ell=1}^{N-1} (i\xi_{\ell})^2,$
the solution operator
\begin{equation*}
\Bu= \CW (\lambda,\BBR^N_+)k =\big( \CW_{1} (\lambda,\BBR^N_+)k, \dots, \CW_{N} (\lambda,\BBR^N_+) k\big)^{\top},
\end{equation*}
in \eqref{eq:GR_half_uj_2}  is rewritten by
\begin{equation*}
\CW_{J} (\lambda,\BBR^N_+) k :=  \CW_{J1}(\lambda,\BBR^N_+)\big((m-\Delta') k\big) 
+\CW_{J2}(\lambda,\BBR^N_+)(\pa_{_N} k) 
+\CW_{J3}(\lambda,\BBR^N_+)(\nabla'\pa_{_N} k) 
\end{equation*} 
for any $J=1,\dots,N.$
The operators $\CW_{Jk} \equiv \CW_{Jk}(\lambda,\BBR^N_+)$ ($k=1,2,3$) above are given by 
\begin{align*}
\CW_{j1}(F_1) :=&\int_0^{\infty} \CF_{\xi'}^{-1} \Big[ n_{j1}(\lambda,\xi')\frac{A}{B} B^2 \CM(x_{_N} + y_{_N})  \wh{F_1}(\xi',y_{_N}) \Big](x') \,d y_{_N}\\
&+\int_0^{\infty} \CF_{\xi'}^{-1} \Big[ n_{j2}(\lambda,\xi')
 B e^{-B(x_{_N}+y_{_N})}  \wh{F_1}(\xi',y_{_N}) \Big](x') \,d y_{_N},\\
 \CW_{j2}(F_2) := & - \int_0^{\infty} \CF_{\xi'}^{-1} \Big[ n_{j1}(\lambda,\xi')\frac{m}{B} 
\big( B^2 \CM(x_{_N} + y_{_N}) - B e^{-B(x_{_N}+y_{_N})} \big) 
 \wh{F_2}(\xi',y_{_N}) \Big](x') \,d y_{_N}\\
&-\int_0^{\infty} \CF_{\xi'}^{-1} \Big[ n_{j2}(\lambda,\xi') \frac{m}{B}  
Be^{-B(x_{_N}+y_{_N})}  \wh{F_2}(\xi',y_{_N}) \Big](x') \,d y_{_N},\\
\CW_{j3}(\BF') :=&\sum_{\ell=1}^{N-1}\int_0^{\infty} \CF_{\xi'}^{-1} \Big[ 
n_{j1}(\lambda,\xi')\frac{i\xi_{\ell}}{B} \big( B^2 \CM(x_{_N} + y_{_N}) - B e^{-B(x_{_N}+y_{_N})} \big) 
 \wh{F_{\ell}}(\xi',y_{_N}) \Big](x') \,d y_{_N}\\
&+\sum_{\ell=1}^{N-1}\int_0^{\infty} \CF_{\xi'}^{-1} \Big[n_{j2}(\lambda,\xi')\frac{i\xi_{\ell}}{B} 
 Be^{-B(x_{_N}+y_{_N})} \wh{F_{\ell}}(\xi',y_{_N}) \Big](x') \,d y_{_N}, 
\end{align*}
for any $j =1, \dots,N-1,$ and
\begin{align*}
\CW_{N1}(F_1) :=&\int_0^{\infty} \CF_{\xi'}^{-1} \Big[ (n_{N1}+n_{N2})(\lambda,\xi') 
Be^{-B(x_{_N}+y_{_N})}  \wh{F_1}(\xi',y_{_N}) \Big](x') \,d y_{_N}\\
&+\int_0^{\infty} \CF_{\xi'}^{-1} \Big[ n_{N1}(\lambda,\xi')\frac{A}{B} B^2 \CM(x_{_N} + y_{_N})  \wh{F_1}(\xi',y_{_N}) \Big](x') \,d y_{_N},\\
\CW_{N2}(F_2) := & - \int_0^{\infty} \CF_{\xi'}^{-1} \Big[ n_{N1}(\lambda,\xi')\frac{m}{B} 
B^2 \CM(x_{_N} + y_{_N})  \wh{F_2}(\xi',y_{_N}) \Big](x') \,d y_{_N}\\
&-\int_0^{\infty} \CF_{\xi'}^{-1} \Big[ n_{N2}(\lambda,\xi') \frac{m}{B} 
Be^{-B(x_{_N}+y_{_N})}  \wh{F_2}(\xi',y_{_N}) \Big](x') \,d y_{_N} ,\\
 \CW_{N3}(\BF') :=&\sum_{\ell=1}^{N-1}\int_0^{\infty} \CF_{\xi'}^{-1} \Big[ 
n_{N1}(\lambda,\xi')\frac{i\xi_{\ell}}{B}  B^2 \CM(x_{_N} + y_{_N}) 
 \wh{F_{\ell}}(\xi',y_{_N}) \Big](x') \,d y_{_N}\\
&+\sum_{\ell=1}^{N-1}\int_0^{\infty} \CF_{\xi'}^{-1} \Big[n_{N2}(\lambda,\xi')\frac{i\xi_{\ell}}{B} 
 Be^{-B(x_{_N}+y_{_N})} \wh{F_{\ell}}(\xi',y_{_N}) \Big](x') \,d y_{_N}.
\end{align*}
Note that $\BM_{-3,1}(\wt\Gamma_{\ep,\lambda_0,\zeta}) \subset \BM_{-2,1}(\wt\Gamma_{\ep,\lambda_0,\zeta})$ and  
$A\slash B,$ $i\xi_{\ell}\slash B$ for $\ell =1,\dots N-1,$ are in $\BM_{0,1}(\wt\Gamma_{\ep,\lambda_0,\zeta}).$ 
Then Lemma \ref{lemma:basic_es} and Corollary \ref{coro:n_Jk} imply that
\begin{gather*}
\CR_{\CL\big(L_q(\BBR^N_+); H^{2-j}_q(\BBR^N_+) \big)}
 \Big( \Big\{ (\tau \pa_{\tau})^{\ell}\big( \lambda^{j\slash 2}\CW_{Jk}(\lambda,\BBR^N_+)\big) : \lambda \in 
 \Gamma_{\ep,\lambda_0,\zeta} \Big\}\Big) \leq C,\\
\CR_{\CL\big(L_q(\BBR^N_+)^{N-1}; H^{2-j}_q(\BBR^N_+) \big)}
 \Big( \Big\{ (\tau \pa_{\tau})^{\ell}\big( \lambda^{j\slash 2}\CW_{J3}(\lambda,\BBR^N_+)\big) : \lambda \in 
\Gamma_{\ep,\lambda_0,\zeta} \Big\}\Big) \leq C,
\end{gather*}
for $k=1,2,$ and $j=0,1,2.$ Thus Definition \ref{def:R-bounded} gives us 
\begin{equation*}
\CR_{\CL\big(H_q^2(\BBR^N_+); H^{2-j}_q(\BBR^N_+)^N \big)}
 \Big( \Big\{ (\tau \pa_{\tau})^{\ell}\big( \lambda^{j\slash 2}\CW(\lambda,\BBR^N_+)\big) : \lambda \in 
\Gamma_{\ep,\lambda_0,\zeta} \Big\}\Big) \leq C.
\end{equation*}

\subsubsection{The $\CR-$boundedness of $\CH(\lambda,\BBR^N_+)$}
Let us apply the Volevich trick to \eqref{eq:GR_half_h_2} and obtain 
\begin{equation*}
h(x) =\big(\CH(\lambda,\BBR^N_+)k\big)(x)
=\big(\CH_{1}(\lambda,\BBR^N_+)k\big)(x) + \big(\CH_{2}(\lambda,\BBR^N_+) k\big)(x),
\end{equation*}
where the operators $\CH_{\Fa} \equiv\CH_{\Fa}(\lambda,\BBR^N_+)$ with $\Fa=1,2,$ are given by 
\begin{align*}
(\CH_{1}k)(x)&:=  \varphi(x_{_N}) \int_{0}^{\infty} \CF^{-1}_{\xi'}\Big[ 
\frac{\det \BBL}{N(A,B)} \,|\xi'| e^{-|\xi'|(x_{_N}+y_{_N})}
\varphi(y_{_N})  \widehat{k}(\xi',y_{_N})\Big] (x') \,dy_{_N},\\
(\CH_{2}k)(x)&:=-\varphi(x_{_N}) \int_{0}^{\infty} \CF^{-1}_{\xi'}\Big[ 
\frac{\det \BBL}{N(A,B)} \,e^{-|\xi'|(x_{_N}+y_{_N})}
\pa_{{_N}} \big( \varphi(y_{_N})  \widehat{k}(\xi',y_{_N})\big) \Big] (x') \,dy_{_N}.
\end{align*}
Clearly, we have for $j=0,1,$ 
\begin{align*}
(\lambda^j \CH_{1}k)(x)&:=  \varphi(x_{_N}) \int_{0}^{\infty} \CF^{-1}_{\xi'}\Big[ 
\frac{\lambda^j\det \BBL}{N(A,B)} \,|\xi'| e^{-|\xi'|(x_{_N}+y_{_N})}
\varphi(y_{_N})  \widehat{k}(\xi',y_{_N})\Big] (x') \,dy_{_N},\\
(\lambda^j\CH_{2}k)(x)&:=-\varphi(x_{_N}) \int_{0}^{\infty} \CF^{-1}_{\xi'}\Big[ 
\frac{\lambda^j \det \BBL}{N(A,B)} \,e^{-|\xi'|(x_{_N}+y_{_N})}
\pa_{{_N}} \big( \varphi(y_{_N})  \widehat{k}(\xi',y_{_N})\big) \Big] (x') \,dy_{_N}.
\end{align*}
\medskip

On the other hand, assume that $1\leq |\alpha'|+n\leq3-j$ with $j=0,1.$ By the identity 
\begin{equation*}
1 = \frac{1+|\xi'|^2}{1+|\xi'|^2} = \frac{1}{1+|\xi'|^2}  - \sum_{\ell=1}^{N-1} \frac{i\xi_{\ell}}{1+|\xi'|^2} i\xi_{\ell},
\end{equation*}
we have
\begin{align*}
(\lambda^j \pa_{x'}^{\alpha'}\pa_{_N}^{n} \CH_1 k)(x) 
=& \sum_{n_1+n_2=n}  \binom{n}{n_1}(\pa_{_N}^{n_1} \varphi)(x_{_N})
\int_0^{\infty} \CF_{\xi'}^{-1} \Big[\frac{\det \BBL}{N(A,B)} \frac{\lambda^{j}(i\xi')^{\alpha'}(-|\xi'|)^{n_2}}{1+|\xi'|^2}
 \\
&\hspace*{2cm} \times\,|\xi'| e^{-|\xi'|(x_{_N}+y_{_N})}\varphi(y_{_N})  \rwh{(1-\Delta')k}(\xi',y_{_N})
 \Big](x')\,dy_{_N},\\
 (\lambda^j \pa_{x'}^{\alpha'}\pa_{_N}^{n} \CH_2 k)(x) 
=& -\sum_{n_1+n_2=n}  \binom{n}{n_1}(\pa_{_N}^{n_1} \varphi)(x_{_N})
\int_0^{\infty} \CF_{\xi'}^{-1} \Big[\frac{\det \BBL}{N(A,B)} \frac{\lambda^{j}(i\xi')^{\alpha'}(-|\xi'|)^{n_2}}{1+|\xi'|^2}
 \\
&\hspace*{2cm} \times\, e^{-|\xi'|(x_{_N}+y_{_N})}
\pa_{{_N}} \big( \varphi(y_{_N})  \widehat{k}(\xi',y_{_N})\big) 
 \Big](x')\,dy_{_N}\\
 & +\sum_{n_1+n_2=n}  \sum_{\ell=1}^{N-1}\binom{n}{n_1}(\pa_{_N}^{n_1} \varphi)(x_{_N})
\int_0^{\infty} \CF_{\xi'}^{-1} \Big[\frac{\det \BBL}{N(A,B)} 
\frac{\lambda^{j}(i\xi')^{\alpha'}(-|\xi'|)^{n_2}(i\xi_{\ell})}{(1+|\xi'|^2)|\xi'|} \\
&\hspace*{2cm} \times\, |\xi'|e^{-|\xi'|(x_{_N}+y_{_N})}
\pa_{{_N}} \big( \varphi(y_{_N})  \widehat{\pa_{\ell}k}(\xi',y_{_N})\big) 
 \Big](x')\,dy_{_N}.
\end{align*}

Next, Lemma \ref{lemma:ABL} and Lemma \ref{lemma:N_AB} imply that
\begin{equation*}
\Big| \pa_{\xi'}^{\kappa'}  (\tau \pa_\tau)^\ell \Big(\frac{\det \BBL}{N(A,B)} \Big) \Big|
 \leq \frac{C_{\kappa'}}{|\lambda| + |\xi'|}(|\lambda|^{1\slash 2} + |\xi'|)^{-|\kappa'|}, 
 \,\,\,\forall\,\, \kappa' \in \BBN_0^{N-1} \,\,\,\hbox{and}\,\,\,\ell =0,1.
\end{equation*}
Then it is not hard to see that  the symbols
\begin{equation*}
\frac{\lambda^j\det \BBL}{N(A,B)},\,\,
\frac{\det \BBL}{N(A,B)} \frac{\lambda^{j}(i\xi')^{\alpha'}|\xi'|^{n_2}}{1+|\xi'|^2}, \,\,
\frac{\det \BBL}{N(A,B)} \frac{\lambda^{j}(i\xi')^{\alpha'}|\xi'|^{n_2}(i\xi_{\ell})}{(1+|\xi'|^2)|\xi'|} 
\end{equation*}
are of the class $\BM_{0,2}(\wt\Gamma_{\ep,\lambda_0,\zeta})$ for $j+|\alpha'|+n_1+n_2 \leq 3$ and $j=0,1.$ 
Thus we infer from Lemma \ref{lemma:basic_es_2} that 
\begin{gather} \label{eq:H_half_Rbdd}
\CR_{\CL \big(L_q(\BBR_+^{N})\big)} 
\big( \{ \lambda^{j}\CH_1(\lambda,\BBR^N_+): \lambda \in \Gamma_{\ep,\lambda_0,\zeta} \} \big) \leq C,\\ \notag
\CR_{\CL \big(H^{1}_q(\BBR^N_+);L_q(\BBR_+^{N})\big)} 
\big( \{ \lambda^{j}\CH_2(\lambda,\BBR^N_+): \lambda \in \Gamma_{\ep,\lambda_0,\zeta} \} \big) \leq C,\\ \nonumber
\CR_{\CL \big(H^{2}_q(\BBR^N_+); L_q(\BBR_+^{N})\big)} 
\big( \{ \lambda^{j}\pa_{x'}^{\alpha'}\pa_{_N}^n\CH_{\Fa}(\lambda,\BBR^N_+): \lambda \in \Gamma_{\ep,\lambda_0,\zeta}  \} \big) \leq C, 
\end{gather}
with  $1\leq |\alpha'|+n\leq3-j,$ $j=0,1,$ and $\Fa=1,2.$ 
Then $\CH(\lambda,\BBR^N_+)$ has the desired property due to \eqref{eq:H_half_Rbdd} and the definition of  $\CR-$boundedness. This completes the proof of Proposition \ref{prop:GR_half_2}.

\section{Generalized model problem in the bent half space} 
\label{sec:bh}
Let $\BPhi$ be a $C^1$ diffeomorphism from $\BBR^N_{\xi}$ onto $\BBR^N_x$ and  $\BPhi^{-1}$ be the inverse of $\BPhi.$ 
Assume that 
\begin{gather*}
\nabla_{\xi} \BPhi^\top (\xi) := \BBA + \BBB(\xi) = \begin{bmatrix}
a_{ij}
\end{bmatrix}_{N\times N} 
+ \begin{bmatrix}
b_{ij}(\xi)
\end{bmatrix}_{N\times N}, \\
\sA_{\Phi} := \nabla_x (\BPhi^{-1})^\top|_{x=\BPhi(\xi)} = \BBA_{-} + \BBB_-(\xi) 
= \begin{bmatrix}
\bar{a}_{ij}
\end{bmatrix}_{N\times N} 
+ \begin{bmatrix}
\bar{b}_{ij}(\xi)
\end{bmatrix}_{N\times N},
\end{gather*}
for some constant orthogonal matrices $\BBA$ and $\BBA_-.$
Moreover, denote $\Omega_+ := \BPhi(\BBR^N_+)$ and $\Gamma_+ := \pa \Omega_+ = \BPhi (\BBR^N_0).$ 
Then $\Gamma_+$ is characterized by the equation 
$(\Phi^{-1})_N (x) = 0,$ 
and the unit outer normal $\Bn_{+}$ to $\Gamma_+$ is given by 
\begin{equation*}
\Bn_{+} \big(\BPhi(\xi)\big) 
=\frac{\sA_{\Phi}\Bn_0}{|\sA_{\Phi}\Bn_0|}
= -\frac{(\nabla_x \Phi_{N}^{-1})\big(\BPhi(\xi)\big) }{\big| (\nabla_x \Phi_{N}^{-1})\big(\BPhi(\xi)\big) \big|}
= -\frac{ \big( \bar{a}_{1N}+\bar{b}_{1N}(\xi),\dots, \bar{a}_{NN}+\bar{b}_{NN}(\xi)\big)^{\top} }{\Big( \sum_{j=1}^N \big( \bar{a}_{Nj}+\bar{b}_{Nj}(\xi) \big)^2 \Big)^{1\slash 2} },
\end{equation*}
with $\Bn_0 := (0,\dots,0,-1)^{\top}.$
\smallbreak

Now we assume that $\BPhi \in H^3_r(\BBR^N)^N$ for some $N<r<\infty,$  
and there exist constants $0<M_1<1\leq M_2 \leq M_3<\infty$ such that 
\begin{equation}\label{hyp:M1M2_BH}
\|(\BBB, \BBB_-)\|_{L_{\infty}(\BBR^N)} \leq M_1, \,\,\,
\|\nabla(\BBB, \BBB_-)\|_{L_{r}(\BBR^N)} \leq M_2, \,\,\,
\|\nabla^2(\BBB, \BBB_-)\|_{L_{r}(\BBR^N)} \leq M_3.
\end{equation}
For such $\Gamma_+$ characterized by $H^3_r(\BBR^N)$ mapping, we consider the following model problem,
\begin{equation}\label{eq:RR_CNS_BH}
\left\{\begin{aligned}
& \lambda  \Bv -\gamma_{1}^{-1}\Di \big( \BBS(\Bv) + \zeta \gamma_3 \di \Bv \BBI \big)
= \Bf &&\quad\hbox{in}\quad  \Omega_+,\\
&\BBS(\Bv)  \Bn_{+} +\zeta \gamma_3 \di \Bv \,\Bn_{+}
+\sigma (m-\Delta_{\Gamma_+})h \,\Bn_{+} = \Bg &&\quad\hbox{on}\quad  \Gamma_+, \\
&\lambda  h - \Bv \cdot \Bn_{+}  = k &&\quad\hbox{on}\quad  \Gamma_+.\\
	\end{aligned}\right.
\end{equation}
In \eqref{eq:RR_CNS_BH}, $\gamma_1$ and $\gamma_3$ are uniformly continuous functions on $\overline{\Omega}_+,$ and there exist some constants $\rwh \gamma_1,\rwh\gamma_3$ such that
\begin{gather}\label{hyp:gamma_RR_BH}
0<\rho_1  \leq \gamma_1(x), \rwh{\gamma_1} \leq \rho_2,\quad 
0 < \gamma_3(x), \rwh{\gamma_3} \leq \rho_3, \,\,\, \forall \,\, x \in \overline{\Omega}_+,
\\ \notag
\sum_{\Fa=1,3}\|\gamma_{\Fa} -\rwh{\gamma_{\Fa}}\|_{L_{\infty}(\Omega_+)}  \leq M_1<1,\quad 
\sum_{\Fa=1,3} \|\nabla \gamma_{\Fa}\|_{L_r(\Omega_+)}  \leq CM_2,
\end{gather}
for some constants $\rho_1, \rho_2, \rho_3>0.$
The main result in this section for \eqref{eq:RR_CNS_BH} reads:
\begin{theo}\label{thm:GR_BH}
Let $0<\ep<\pi\slash 2,$ $\sigma,m, \mu, \nu,\zeta_0>0,$ $1<q<\infty,$ $N<r<\infty$ and $r\geq q.$ 
Assume that \eqref{hyp:gamma_RR_BH} is satisfied. 
For $\Omega_+$ given above, we set that 
\begin{equation*}
Y_q(\Omega_+) := L_q(\Omega_+)^N \times H^{1}_q(\Omega_+)^N\times H^{2}_q(\Omega_+), \quad 
\CY_q(\Omega_+) := L_q(\Omega_+)^N \times Y_q(\Omega_+).
\end{equation*}
Then for any $(\Bf,\Bg,k) \in Y_q(\Omega_+),$ there exist constants $\lambda_0,r_b \geq 1$ and operator families
\begin{align*}
\CA_0(\lambda,\Omega_+) & \in 
\Hol\Big( \Gamma_{\ep,\lambda_0,\zeta} ; \CL\big(\CY_q(\Omega_+);H^2_q(\Omega_+)^N \big) \Big),\\
\CH_0(\lambda,\Omega_+) & \in 
 \Hol\Big( \Gamma_{\ep,\lambda_0,\zeta} ; \CL\big(\CY_q(\Omega_+);H^3_q(\Omega_+) \big) \Big),
\end{align*}
such that $(\Bv,h):=\big(\CA_0(\lambda,\Omega_+), \CH_0(\lambda,\Omega_+) \big)(\Bf,\lambda^{1\slash 2}\Bg,\Bg,k)$
is a solution of \eqref{eq:RR_CNS_BH}. Moreover, we have 
\begin{gather*}
\CR_{\CL\big(\CY_q(\Omega_+); H^{2-j}_q(\Omega_+)^N \big)}
 \Big( \Big\{ (\tau \pa_{\tau})^{\ell}\big( \lambda^{j\slash 2}\CA_0(\lambda,\Omega_+)\big) : \lambda \in 
\Gamma_{\ep,\lambda_0,\zeta} \Big\}\Big) \leq r_b,\\
 \CR_{\CL\big(\CY_q(\Omega_+); H^{3-j'}_q(\Omega_+) \big)}
 \Big( \Big\{ (\tau \pa_{\tau})^{\ell}\big( \lambda^{j'}\CH_0(\lambda,\Omega_+)\big) : \lambda \in 
\Gamma_{\ep,\lambda_0,\zeta} \Big\}\Big) \leq r_b,
\end{gather*}
for $\ell, j'=0,1,$ $j=0,1,2,$ and $\tau := \Im \lambda.$
Above the constants $\lambda_0$ and $r_b$ depend solely on
$\ep,$ $\sigma,$ $m,$ $\mu,$ $\nu,$ $q,$ $r,$ $N,$ $\zeta_0,$ $\rho_1,$ $\rho_2,$ $\rho_3.$
\end{theo}

\subsection{Reduction of the \eqref{eq:RR_CNS_BH}}
Introduce $\Bw (\xi):=\BBA^{\top}_- \Bv \big(\BPhi(\xi)\big)$
and $H(\xi):= h\big(\BPhi(\xi)\big).$ 
Let us derive the equations of $\Bw$ and $H$ according to \eqref{eq:RR_CNS_BH}. 
Firstly, thanks to the facts that
\begin{equation*}
\nabla_x \Bv^{\top} \big(\BPhi(\xi)\big) =\sAP \nabla_\xi \Bw^\top\BBA_-^{\top},\quad 
(\di_x \Bv) \big(\BPhi(\xi)\big) =\sAP: (\nabla_\xi \Bw^\top\BBA_-^{\top}),
\end{equation*}
we obtain that
\begin{align}
\BBF_0(\Bw) &:= \big( \BBS(\Bv) + \zeta \gamma_3 \di \Bv \BBI \big)  \big(\BPhi(\xi)\big)  \notag \\ \notag
&=\mu (\sAP \nabla_{\xi} \Bw^{\top}\BBA^{\top}_- + \BBA_- \nabla^{\top}_{\xi} \Bw\sAP^{\top}) 
 +\big( \nu-\mu + \zeta (\gamma_3 \circ \BPhi) \big) \big( (\BBA_-^{\top} \sAP) : \nabla_{\xi} \Bw^{\top} \big) \BBI,\\
 \BBF_0(\Bw)\sAP&:=\BBA_- \big( \BBS(\Bw) +    \zeta \wh\gamma_3 \di_{\xi} \Bw \, \BBI \big) +  \BBF(\Bw),\label{eq:RR_tensor_1}
\end{align}
where $\BBF(\Bw):=\BBF_1(\Bw)+\BBF_2(\Bw)$ and
\begin{align*}
\BBF_1(\Bw) &:=  \BBA_- \BBS(\Bw) \BBA_-^{\top} \BBB_- 
+ \mu \big( \BBB_-\nabla_{\xi} \Bw^{\top}\BBA_-^{\top}+\BBA_-\nabla_{\xi}^{\top} \Bw\, \BBB_-^{\top} \big) \sAP \\
& \quad + (\nu-\mu) \big( (\BBA_-^{\top} \BBB_-) : \nabla_{\xi} \Bw^{\top} \big) \sAP,\\
\BBF_2(\Bw)&:= \zeta \wh\gamma_3  (\di_{\xi}\Bw) \BBB_-
+ \zeta \wh\gamma_3  \big( (\BBA_-^{\top} \BBB_-) : \nabla_{\xi} \Bw^{\top} \big)  \sAP \\
&\quad + \zeta  (\gamma_3\circ \BPhi -\wh\gamma_3 )\big( (\BBA_-^{\top} \sAP) : \nabla_{\xi} \Bw^{\top} \big)  \sAP.
\end{align*}

On the other hand,  $\Gamma_+:= \Phi(\BBR^N_0)$ and 
$\nabla_{\xi} \BPhi^\top (\xi',0) = \begin{bmatrix}
a_{ij}
\end{bmatrix}_{N\times N} 
+ \begin{bmatrix}
b_{ij}(\xi',0)
\end{bmatrix}_{N\times N}$ by previous convention.
 For simplicity, we write for $i,j=1,\dots,N-1,$
\begin{equation*}
\Bphi(\xi'):= \BPhi(\xi',0),\quad 
\pa_{i}:= \pa_{\xi_i}, \quad
\Btau_i:=\pa_{i} \Bphi \quad
\hbox{and}\quad 
\Btau_{ij}:=\pa_{j}\pa_{i} \Bphi.
\end{equation*}
Then the first fundamental form of $\Gamma_+$ and its inverse are given by
 \begin{equation*}
 \BBG_+ = \begin{bmatrix} g_{ij}
\end{bmatrix}_{(N-1)\times (N-1)}
:=\begin{bmatrix}
\Btau_i \cdot \Btau_j
\end{bmatrix}_{(N-1)\times (N-1)}, \quad 
\Fg_+ :=\det \BBG_+, \quad 
\BBG_+^{-1} :=  \begin{bmatrix} g^{ij}
\end{bmatrix}_{(N-1)\times (N-1)}.
 \end{equation*}
Introduce that 
 \begin{equation*}
\wt g_{ij} := g_{ij} - \delta_{ij} 
=\sum_{k=1}^{N} (b_{ik}a_{jk}+a_{ik}b_{jk} + b_{ik} b_{jk}), \quad
\wt g^{ij} := g^{ij} -\delta_{ij} .
 \end{equation*}
 Then \eqref{hyp:M1M2_BH} implies that 
 \begin{gather}\label{es:geo_1_BH}
\|(\wt g_{ij}, \wt g^{ij})\|_{L_{\infty}(\BBR^{N-1})} +\|\Fg_+   -  1 \|_{L_{\infty}(\BBR^{N-1})} \leq C M_1, \quad
\|\nabla_{\xi'} (\wt g_{ij}, \wt g^{ij})\|_{L_{r}(\BBR^{N-1})} \leq C M_2,\\ \notag
 \|\nabla_{\xi'}^2 \wt g_{ij}\|_{L_{r}(\BBR^{N-1})} \leq C M_2 M_3, \quad 
  \|\nabla_{\xi'}^2 \wt g^{ij}\|_{L_{r}(\BBR^{N-1})} \leq C M_2^3 M_3.
 \end{gather}
 \smallbreak 
 
Next, recall the definition of the \emph{Laplace-Beltrami} operator on $\Gamma_+,$
\begin{align*}
 \big(\Delta_{\Gamma_+} h\big)  \big( \Bphi(\xi') \big)
 &:= \frac{1}{ \sqrt{\Fg_+}} \pa_i (\sqrt{\Fg_+} g^{ij} \pa_j H)
 = g^{ij} \pa_{i}\pa_{j} H -  g^{ij}\Lambda^k_{ij} \pa_k H\\
 &= \Delta'_{\xi'} H + \CG(H),
\end{align*}
where the \emph{Christoffel symbols} $\Lambda^k_{ij}:=g^{kr} \Btau_{ij} \cdot \Btau_r,$ for $i,j,k,r=1,\dots,N-1,$ and 
\begin{align*}
\CG(H) := \wt g^{ij} \pa_{i}\pa_{j} H -  g^{ij}\Lambda^k_{ij} \pa_k H.
\end{align*}
Furthermore, \eqref{hyp:M1M2_BH} and \eqref{es:geo_1_BH} yield that
\begin{equation}\label{es:geo_2_BH}
\|\Lambda_{ij}^k\|_{L_\infty(\BBR^{N-1})} \leq CM_3, \quad 
\|\nabla_{\xi'}\Lambda_{ij}^k\|_{L_r(\BBR^{N-1})} \leq CM_2M_3.
\end{equation}

Now it is not hard to see that \eqref{eq:RR_CNS_BH} turns to 
\begin{equation}\label{eq:RR_CNS_BH_2}
\left\{\begin{aligned}
& \lambda  \Bw -\wh\gamma_{1}^{-1} \Di_{\xi}
\big(  \BBS(\Bw) + \zeta  \wh\gamma_3 \di_{\xi} \Bw\, \BBI_{N} \big)  +\CF_1(\Bw)
= \BF_+ &&\quad\hbox{in}\quad  \BBR^N_+,\\
&\big( \BBS(\Bw)+\zeta \wh\gamma_3 \di_{\xi} \Bw \BBI_{N} \big)\Bn_{0}
+\sigma (m-\Delta')H \,\Bn_{0} +\CF_2(\Bw,H) \Bn_0 = \BG_+ &&\quad\hbox{on}\quad  \BBR^N_0, \\
&\lambda  H - \Bw \cdot \Bn_{0} + \CF_3(\Bw) \cdot \Bn_0  = K_+  &&\quad\hbox{on}\quad  \BBR^N_0,\\
	\end{aligned}\right.
\end{equation}
where $\BF_+,$ $\BG_+$ and $K_+$ are defined by
\begin{equation*}
\BF_+(\xi):= \BBA^\top_-(\Bf \circ \BPhi )(\xi), \quad 
\BG_+(\xi):= |\sA_{\Phi}\Bn_0|\BBA_-^{\top} (\Bg \circ \BPhi )(\xi), \quad 
K_+(\xi):= (k \circ \BPhi )(\xi),
\end{equation*}
and the operators 
\begin{align*}
\CF_1(\Bw)&:=(\wh\gamma_{1}^{-1}-\gamma_1^{-1}\circ \BPhi)
\Di_{\xi}\big(\BBS(\Bw) + \zeta \wh\gamma_3 \di_{\xi} \Bw \BBI_{N}\big)
-(\gamma_1^{-1}\circ\BPhi) \BBA_-^{\top} \Di_{\xi} \BBF(\Bw)\\
& \quad +(\gamma_1^{-1}\circ\BPhi) \BBA_-^{\top}  \BBF_0(\Bw) \Di \sAP,\\
\CF_2(\Bw,H)&:= \BBF_b(\Bw) + \BBG_b(H),\\
\BBF_b(\Bw)&:=\BBA_-^{\top} \BBF(\Bw),\\
\BBG_b(H)&:= \sigma m H \BBA_-^{\top} \BBB_- 
-\sigma \CG(H) \BBI_{N}  -\sigma (g^{ij} \pa_{i}\pa_{j} H -  g^{ij}\Lambda^k_{ij} \pa_k H) \BBA^{\top}_-\BBB_-,\\
\CF_3(\Bw)&:= (1 - |\sA_{\Phi} \Bn_0|^{-1})  \Bw 
-   |\sA_{\Phi} \Bn_0|^{-1} \BBB_-^{\top}\BBA_- \Bw. 
\end{align*}
\smallbreak

By \eqref{hyp:M1M2_BH}, $\BZ_+:=(\BF_+,\lambda^{1\slash 2}\BG_+,\BG_+,K_+)$ belongs to 
$\CY_q(\BBR^N_+)$  with
\begin{equation*}
\|\BZ_+\|_{\CY_q(\BBR^N_+)}
=\| (\BF_+,\lambda^{1\slash 2}\BG_+,\BG_+,K_+)\|_{\CY_q(\BBR^N_+)}
 \leq C_{N,q} \|(\Bf,\lambda^{1\slash 2}\Bg,\Bg,k)\|_{\CY_q(\Omega_+)}.
\end{equation*}
Thus, according to Theorem \ref{thm:GR_half_0}, 
$\Bu:=\CA_0(\lambda,\BBR^N_+) \BZ_+$ and $\Fh:=\CH_0(\lambda,\BBR^N_+)  \BZ_+$ satisfy 
\begin{equation}\label{eq:RR_CNS_BH_3}
\left\{\begin{aligned}
& \lambda  \Bu -\wh\gamma_{1}^{-1} \Di_{\xi}
\big(  \BBS(\Bu) + \zeta  \wh\gamma_3 \di_{\xi} \Bu\, \BBI_{N} \big)  +\CF_1(\Bu)
= \BF_+ +\CR_1(\lambda)\BZ_+
 &&\quad\hbox{in}\quad  \BBR^N_+,\\
&\big( \BBS(\Bu)+\zeta \wh\gamma_3 \di_{\xi} \Bu \BBI_{N} \big)\Bn_{0}
+\sigma (m-\Delta')\Fh \,\Bn_{0} +\CF_2(\Bu,\Fh) \Bn_0 = \BG_+ + \CR_2(\lambda)\BZ_+ 
&&\quad\hbox{on}\quad  \BBR^N_0, \\
&\lambda  \Fh - \Bu \cdot \Bn_{0} + \CF_3(\Bu) \cdot \Bn_0  = K_+ +\CR_3(\lambda)  \BZ_+
&&\quad\hbox{on}\quad  \BBR^N_0,\\
	\end{aligned}\right.
\end{equation}
where $\CR_k(\lambda)\BZ_+,$ for $k=1,2,3,$ are defined by
\begin{align*}
\CR_1(\lambda)\BZ_+    &:= \CF_1 \big( \CA_0 (\lambda,\BBR^N_+) \BZ_+\big),\\
\CR_2 (\lambda) \BZ_+ &:= \CF_2 \big(  \CA_0 (\lambda,\BBR^N_+)\BZ_+, \CH_0 (\lambda,\BBR^N_+)\BZ_+\big)\Bn_0,\\
\CR_3(\lambda)\BZ_+  & :=\CF_3 \big(  \CA_0 (\lambda,\BBR^N_+)\BZ_+\big) \cdot \Bn_0.
\end{align*}
\smallbreak

Next, set that $\CR(\lambda) := \big( \CR_1(\lambda),\CR_2(\lambda), \CR_3(\lambda) \big)^{\top}$ and 
$F_{\lambda}(\Bf,\Bg,k):= ( \Bf,\lambda^{1\slash 2}\Bg, \Bg, k)^{\top}$ for any $(\Bf,\Bg,k)$ in $Y_q(\BBR^N_+).$
Then we claim that there exist some constants $C$ and $\lambda_0$ depending only on $\ep,$ $\sigma,$ $m,$ $\mu,$ $\nu,$ $q,$ $r,$ $N,$ $\zeta_0,$ $\rho_1,$ $\rho_2,$ $\rho_3,$ such that
\begin{multline}\label{es:key_Rbd_BH}
\CR_{\CL\big(\CY_q(\BBR^N_+)\big)} 
\big\{  (\tau \pa_{\tau})^{\ell}F_{\lambda}\CR(\lambda):\lambda\in \Gamma_{\ep,\lambda_0,\zeta}\big\} 
\leq C (M_1 + \sigma_0) (1+\sigma+\sigma  m\lambda_0^{-1} ) \\
+ C \lambda_0^{-1 \slash 2}   (\sigma +1) 
\big( M_3 + \sigma_0^{-\frac{N}{r-N}} M_2^{\frac{r}{r-N}} (M_3^{\frac{r}{r-N}}+m )\big),
\hspace*{1cm}
\end{multline}
for any $0<\sigma_0 <1.$
The proof of \eqref{es:key_Rbd_BH} is postponed to the next subsection.
\smallbreak

Let us continue the proof of Theorem \ref{thm:GR_BH} by admitting \eqref{es:key_Rbd_BH} for a while.
By choosing $\sigma_0, M_1$ small and $\lambda_0$ large enough in \eqref{es:key_Rbd_BH}, it holds that 
\begin{equation}\label{es:FR_BH_1}
\CR_{\CL\big(\CY_q(\BBR^N_+)\big)} 
\big\{  (\tau \pa_{\tau})^{\ell}F_{\lambda}\CR(\lambda):\lambda\in \Gamma_{\ep,\lambda_0,\zeta}\big\} \leq 1\slash 2.
\end{equation} 
In particular, \eqref{es:FR_BH_1} yields that
\begin{equation}\label{es:RF_BH_1}
\|F_{\lambda} \CR(\lambda) F_{\lambda}(\BF_+,\BG_+,K_+)\|_{\CY_q(\BBR^N_+)} 
\leq 1\slash 2 \|F_{\lambda}(\BF_+,\BG_+,K_+)\|_{\CY_q(\BBR^N_+)},
\,\,\, \forall \,\, \lambda \in \Gamma_{\ep,\lambda_0,\zeta},
\end{equation} 
and that
\begin{equation}\label{es:FR_BH_2}
\CR_{\CL\big(\CY_q(\BBR^N_+)\big)} 
\big\{  (\tau \pa_{\tau})^{\ell}\big(Id + F_{\lambda}\CR(\lambda) \big)^{-1}:
\lambda\in \Gamma_{\ep,\lambda_0,\zeta}\big\} \leq 1\slash 2.
\end{equation} 

Now note that  the  norm $\|\cdot\|_{Y_q(\BBR^N_+)}$ is equivalent to $\|\cdot\|_{Y_{q,\lambda}(\BBR^N_+)}$ 
for any fixed $\lambda \in \Gamma_{\ep,\lambda_0,\zeta}$ in $Y_q(\BBR^N_+)$ with 
\begin{equation*}
\|(\Bf,\Bg,k)\|_{Y_{q,\lambda}(\BBR^N_+)} := \|F_{\lambda} (\Bf,\Bg,k)\|_{\CY_q(\BBR^N_+)}, \,\,\,
\forall (\Bf,\Bg,k) \in Y_q(\BBR^N_+).
\end{equation*}
Denote $Y_{q,\lambda}(\BBR^N_+):= \big(Y_q(\BBR^N_+),\|\cdot\|_{Y_{q,\lambda}(\BBR^N_+)}\big)$
and \eqref{es:RF_BH_1} implies that 
\begin{equation*}
\|\CR(\lambda)F_{\lambda}\|_{\CL\big(Y_{q,\lambda}(\BBR^N_+)\big)} \leq 1 \slash 2.
\end{equation*}
Thus the inverse operator $(Id + \CR(\lambda)F_{\lambda})^{-1}$ exists for any fixed $\lambda\in \Gamma_{\ep,\lambda_0,\zeta}$ with
\begin{equation*}
\| (Id + \CR(\lambda)F_{\lambda})^{-1}\|_{\CL\big(Y_{q,\lambda}(\BBR^N_+)\big)} \leq 2.
\end{equation*}
Then for any $(\BF_+,\BG_+,K_+) \in Y_q(\BBR^N_+),$ we set that 
\begin{align*}
\Bw&:= \CA_0(\lambda,\BBR^N_+)F_{\lambda} (Id + \CR(\lambda)F_{\lambda})^{-1} (\BF_+,\BG_+,K_+),\\
H&:= \CH_0(\lambda,\BBR^N_+)F_{\lambda} (Id + \CR(\lambda)F_{\lambda})^{-1} (\BF_+,\BG_+,K_+),
\end{align*}
which solve \eqref{eq:RR_CNS_BH_2} by keeping \eqref{eq:RR_CNS_BH_3} in mind. As 
\begin{equation*}
F_{\lambda} (Id + \CR(\lambda)F_{\lambda})^{-1} 
=\sum_{j=0}^{\infty} F_{\lambda} \big(-\CR(\lambda)F_{\lambda}\big)^j
= \big(Id + F_{\lambda}\CR(\lambda) \big)^{-1} F_{\lambda},
\end{equation*}
the solution $(\Bw, H)$ above can be written by   
\begin{align*}
\Bw= \CA_0(\lambda,\BBR^N_+)  \big(Id + F_{\lambda}\CR(\lambda) \big)^{-1}  \BZ_+, \quad
H= \CH_0(\lambda,\BBR^N_+) \big(Id + F_{\lambda}\CR(\lambda) \big)^{-1} \BZ_+
\end{align*} 
for $\BZ_+ := (\BF_+,\lambda^{1\slash 2}\BG_+,\BG_+,K_+).$

Now we introduce that operators $\CT_k$ for $k=1,2,3,$
\begin{align*}
\CT_1 F_\lambda (\Bf, \Bg, k)
&:= F_{\lambda}\big( \BBA^\top_-(\Bf \circ \BPhi ),   |\sA_{\Phi}\Bn_0|\BBA_-^{\top} (\Bg \circ \BPhi ),  k \circ \BPhi \big),\\
\CT_2 \,\Bw(x)& := \BBA^{\top}_- (\Bw \circ \BPhi^{-1}) \,\,\, \hbox{and}\,\,\,
\CT_3 H (x) := H\circ \BPhi^{-1},
\end{align*}
which, according to assumptions on $\BPhi$ and Lemma \ref{lemma:ab_BH}, satisfy 
\begin{equation}\label{eq:T123_BH}
\CT_1 \in \CL\big( \CY_q(\Omega_+); \CY_q(\BBR^N_+)\big), \,\,\, 
\CT_2 \in \CL \big( H^2_q(\BBR^N_+)^N; H^2_q(\Omega_+)^N \big),\,\,\,
\CT_3 \in \CL \big( H^3_q(\BBR^N_+); H^3_q(\Omega_+)\big).
\end{equation}
At last, according to \eqref{es:FR_BH_2}, \eqref{eq:T123_BH}, Theorem \ref{thm:GR_half_0} 
and Remark \ref{rmk:R-bounded},
\begin{align*}
\CA_0(\lambda,\Omega_+) := \CT_2 \,  \CA_0(\lambda,\BBR^N_+)  \big(Id + F_{\lambda}\CR(\lambda) \big)^{-1} \CT_1,\\
\CH_0(\lambda,\Omega_+) := \CT_3 \,  \CH_0(\lambda,\BBR^N_+)  \big(Id + F_{\lambda}\CR(\lambda) \big)^{-1} \CT_1,
\end{align*}
are the desired solution operators.


\subsection{Proof of \eqref{es:key_Rbd_BH}}
Here we are proving the technical estimate \eqref{es:key_Rbd_BH}. In the rest of this subsection, we always admit that $\tau:=\Im \lambda$ for $\lambda \in \Gamma_{\ep,\lambda_0,\zeta},$ $\ell,\ell'=0,1,$ and the constants $M_1,M_2,M_3$ given by \eqref{hyp:M1M2_BH}.
For brevity, we say the constant $K>0$ is \emph{admissible} if the choice of $K$ depends only on 
the parameters
$\ep,$ $\sigma,$ $m,$ $\mu,$ $\nu,$ $q,$ $r,$ $N,$ $\zeta_0,$ $\rho_1,$ $\rho_2,$ $\rho_3.$
\subsubsection*{The study of $\CR_1(\lambda)$}
Recall $\Bw := \CA_0(\lambda,\BBR^N_+) \BZ_+$ for 
 $\BZ_+:=(\BF_+,\lambda^{1\slash 2}\BG_+,\BG_+,K_+) \in \CY_q(\BBR^N_+).$ 
Firstly, we introduce the operator families $\CT_{ij} (\lambda),$ $\CT_{ijk}(\lambda),$ $i,j,k=1,\dots,N,$ such that
\begin{align*}
\BBF(\Bw)_{ij}= \wt \CT_{ij}(\lambda) ( \lambda^{1\slash 2} \nabla \Bw) 
= \CT_{ij}(\lambda) (\BZ_+), \quad 
\pa_{k} \BBF(\Bw)_{ij} = \CT_{ijk}(\lambda) (\BZ_+).
\end{align*}
Moreover, it is easy to see from \eqref{hyp:M1M2_BH} and \eqref{hyp:gamma_RR_BH} that
\begin{equation*}
\CR_{\CL\big(L_q(\BBR^N_+)^{N^2}; L_q(\BBR^N_+)\big)} 
\big\{(\tau \pa_{\tau})^{\ell} \lambda^{\ell'\slash 2} \wt\CT_{ij}(\lambda):\lambda\in \Sigma_{\ep,\lambda_0}\big\}
\leq C \lambda_0^{(\ell'-1)\slash 2} M_1,
\end{equation*}
which, together with Theorem \ref{thm:GR_half_0} and Remark \ref{rmk:R-bounded}, yields that 
\begin{equation}\label{es:R_Tij_BH}
\CR_{\CL\big(\CY_q(\BBR^N_+); L_q(\BBR^N_+)\big)} 
\big\{(\tau \pa_{\tau})^{\ell} \lambda^{\ell'\slash 2} \CT_{ij}(\lambda):\lambda\in \Gamma_{\ep,\lambda_0,\zeta}\big\}
\leq C \lambda_0^{(\ell'-1)\slash 2}  M_1,
\end{equation}
for some admissible constants $C$ and $\lambda_0 \geq 1.$ 
\smallbreak

To study $\CT_{ijk}(\lambda),$ we define 
$$\wt \CT_{ijk}(\lambda)(\BZ_+):=\pa_i w_j \pa_k \BBB_-
= \pa_i \big(  \CA_{0j}(\lambda, \BBR^N_+) \BZ_+ \big) \pa_k \BBB_- .$$
For any $N \in \BBN,$ $\lambda_{\alpha} \in \Gamma_{\ep,\lambda_0,\zeta},$ $\BZ_{+\alpha} \in \CY_q(\BBR^N_+),$ 
$w_{\alpha j}:= \CA_{0j}(\lambda, \BBR^N_+) \BZ_{+\alpha}$ for all $\alpha =1,\dots,N,$
Lemma \ref{lemma:ab_BH} and Remark \ref{rmk:R-bounded} imply that 
\begin{align*}
\Big\|\sum_{\alpha=1}^N \ep_\alpha \wt\CT_{ijk}(\lambda_\alpha) \BZ_{+\alpha}
\Big\|_{L_q \big(\Omega;L_q(\BBR^N_+)^{N^2}\big)}
&\leq \sigma_0 \Big\| \nabla \pa_i \big( \sum_{\alpha=1}^N \ep_\alpha  w_{\alpha j} \big)
\Big\|_{L_q \big(\Omega;L_q(\BBR^N_+)^{N}\big)} \\
& \quad + C_r \lambda_0^{-1\slash 2} \sigma_0^{-\frac{N}{r-N}} M_2^{\frac{r}{r-N}} 
\Big\|\sum_{\alpha=1}^N \ep_\alpha  \lambda_{\alpha}^{1\slash 2} \pa_i  w_{\alpha j}
\Big\|_{L_q \big(\Omega;L_q(\BBR^N_+)\big)}.
\end{align*}
Then we can conclude from Theorem \ref{thm:GR_half_0} that 
\begin{equation} \label{es:R_Tijk_BH_1}
\CR_{\CL\big(\CY_q(\BBR^N_+); L_q(\BBR^N_+)\big)} 
\big\{(\tau \pa_{\tau})^{\ell} \wt \CT_{ijk}(\lambda):\lambda\in \Gamma_{\ep,\lambda_0,\zeta}\big\}
\leq C \Big(\sigma_0 + \lambda_0^{-1\slash 2}\sigma_0^{-\frac{N}{r-N}}M_2^{\frac{r}{r-N}}\Big),
\end{equation}
which gives us that 
\begin{equation}\label{es:R_Tijk_BH_2}
\CR_{\CL\big(\CY_q(\BBR^N_+); L_q(\BBR^N_+)\big)} 
\big\{(\tau \pa_{\tau})^{\ell}\CT_{ijk}(\lambda):\lambda\in \Gamma_{\ep,\lambda_0,\zeta}\big\}
\leq C \Big( M_1 +\sigma_0 + \lambda_0^{-1\slash 2}\sigma_0^{-\frac{N}{r-N}}M_2^{\frac{r}{r-N}}\Big),
\end{equation}
for some admissible constants $C$ and $\lambda_0 \geq 1.$ 
Moreover,  the bound of $\CR_1(\lambda)$ is clear by \eqref{es:R_Tijk_BH_1} and \eqref{es:R_Tijk_BH_2},
\begin{equation}\label{es:R1_BH}
\CR_{\CL\big(\CY_q(\BBR^N_+); L_q(\BBR^N_+)^N\big)} 
\big\{(\tau \pa_{\tau})^{\ell}\CR_1(\lambda):\lambda\in \Gamma_{\ep,\lambda_0,\zeta}\big\} \leq C 
\Big( M_1 +\sigma_0 + \lambda_0^{-1\slash 2}\sigma_0^{-\frac{N}{r-N}}M_2^{\frac{r}{r-N}}\Big).
\end{equation}
\medskip 

\subsubsection*{The study of $\CR_2(\lambda)$}
Set that
\begin{equation*}
\CR_{21}(\lambda) \BZ_+ :=\BBF_b\big(  \CA_0 (\lambda,\BBR^N_+)\BZ_+\big)\Bn_0, \quad
\CR_{22}(\lambda) \BZ_+ :=\BBG_b\big(  \CH_0 (\lambda,\BBR^N_+)\BZ_+\big)\Bn_0.
\end{equation*}
According to \eqref{es:R_Tij_BH} and \eqref{es:R_Tijk_BH_2},  we have 
\begin{equation*}
\CR_{\CL\big(\CY_q(\BBR^N_+); L_q(\BBR^N_+)^N\big)} 
\big\{ (\tau \pa_{\tau})^{\ell} \lambda^{\ell'\slash 2}\CR_{21}(\lambda):\lambda\in \Gamma_{\ep,\lambda_0,\zeta}\big\}
\leq C \lambda_0^{(\ell'-1)\slash 2}  M_1,
\end{equation*}
\begin{equation*}
\CR_{\CL\big(\CY_q(\BBR^N_+); H^1_q(\BBR^N_+)^N\big)} 
\big\{(\tau \pa_{\tau})^{\ell}\CR_{21}(\lambda):\lambda\in \Gamma_{\ep,\lambda_0,\zeta}\big\}
\leq C \Big( M_1 +\sigma_0 + \lambda_0^{-1\slash 2}\sigma_0^{-\frac{N}{r-N}}M_2^{\frac{r}{r-N}}\Big),
\end{equation*}
for some admissible constants $C$ and $\lambda_0 \geq 1.$ 
\smallbreak

Now let us study $\CR_{22}(\lambda)$ and recall that
\begin{equation*}
\BBG_b(H) = \sigma(m  -\Delta')H \BBA_-^{\top} \BBB_-
 - \sigma \CG(H) \BBA_-^{\top} \sAP,
 \,\,\, \text{with}\,\,\, H :=  \CH_0 (\lambda,\BBR^N_+)\BZ_+.
\end{equation*}
Next, define $\CT_{\CG}^0(\lambda)(\BZ_+) := \CG(H)$ and 
$\CT_{\CG,\alpha}^1(\lambda)(\BZ_+) :=\pa_{\alpha} \CG(H)$ for $\alpha=1,\dots,N.$
Then arguing as \eqref{es:R_Tij_BH} and \eqref{es:R_Tijk_BH_1}, 
we infer from \eqref{es:geo_1_BH}, \eqref{es:geo_2_BH} and Lemma \ref{lemma:ab_BH} that
\begin{equation}\label{es:G_BH_1}
\CR_{\CL\big(\CY_q(\BBR^N_+);L_q(\BBR^N_+)\big)} 
\big\{  (\tau \pa_{\tau})^{\ell}\lambda^{\ell'\slash 2}\CT_{\CG}^{0}(\lambda):\lambda\in \Gamma_{\ep,\lambda_0,\zeta}\big\}
\leq C  \lambda_0^{-1+\ell'\slash 2}M_3,
\end{equation}
\begin{multline}\label{es:G_BH_2}
\CR_{\CL\big(\CY_q(\BBR^N_+);L_q(\BBR^N_+)\big)} 
\big\{  (\tau \pa_{\tau})^{\ell}\CT_{\CG,\alpha}^{1}(\lambda):\lambda\in \Gamma_{\ep,\lambda_0,\zeta}\big\} \\
\leq C \Big( M_1 +\sigma_0 + \lambda_0^{-1} 
\big( M_3 +  \sigma_0^{-\frac{N}{r-N}}  (M_2M_3)^{\frac{r}{r-N}} \big)  \Big), \hspace*{2cm}
\end{multline}
for some admissible constants $C$ and $\lambda_0.$ 
Thus \eqref{es:G_BH_1} and \eqref{es:G_BH_2} yield that
\begin{equation*}
\CR_{\CL\big(\CY_q(\BBR^N_+); L_q(\BBR^N_+)^N\big)} 
\big\{ (\tau \pa_{\tau})^{\ell} \lambda^{\ell'\slash 2}\CR_{22}(\lambda):\lambda\in \Gamma_{\ep,\lambda_0,\zeta}\big\}
\leq C \sigma \lambda_0^{-1+\ell'\slash 2} (M_1 \max\{1,m\} + M_3),
\end{equation*}
\begin{multline*}
\CR_{\CL\big(\CY_q(\BBR^N_+); H^1_q(\BBR^N_+)^N\big)} 
\big\{(\tau \pa_{\tau})^{\ell}\CR_{22}(\lambda):\lambda\in \Gamma_{\ep,\lambda_0,\zeta}\big\}
\leq C\sigma (M_1 + \sigma_0) \max\{1, m\lambda_0^{-1}\}\\
+ C \sigma  \lambda_0^{-1} 
\big( M_3 + \sigma_0^{-\frac{N}{r-N}} M_2^{\frac{r}{r-N}} (M_3^{\frac{r}{r-N}}+\max\{1,m\} )\big). 
\hspace*{1cm}
\end{multline*}
Finally, combing the discussions on $\CR_{21}(\lambda)$ and $\CR_{22}(\lambda)$ yields that 
\begin{equation}\label{es:R2_BH_1}
\CR_{\CL\big(\CY_q(\BBR^N_+); L_q(\BBR^N_+)^N\big)} 
\big\{ (\tau \pa_{\tau})^{\ell} \lambda^{1\slash 2}\CR_{2}(\lambda):\lambda\in \Gamma_{\ep,\lambda_0,\zeta}\big\}
\leq C \big( M_1 +  \sigma  \lambda_0^{-1\slash 2}(m+ M_3) \big),
\end{equation}
\begin{multline}\label{es:R2_BH_2}
\CR_{\CL\big(\CY_q(\BBR^N_+); H^1_q(\BBR^N_+)^N\big)} 
\big\{(\tau \pa_{\tau})^{\ell}\CR_{2}(\lambda):\lambda\in \Gamma_{\ep,\lambda_0,\zeta}\big\} 
\leq C (M_1 + \sigma_0) (1+\sigma+\sigma  m\lambda_0^{-1} ) \\
+ C \lambda_0^{-1 \slash 2}   (\sigma +1) 
\big( M_3 + \sigma_0^{-\frac{N}{r-N}} M_2^{\frac{r}{r-N}} (M_3^{\frac{r}{r-N}}+m )\big). 
\hspace*{1cm}
\end{multline}

\subsubsection*{The study of $\CR_3(\lambda)$}
By direct calculations, we have 
\begin{equation*}
\big| 1- |\sA_{\Phi} \Bn_0|^{-1} \big| \lesssim |\BBB_-|,  \,\,\,
\big| \nabla |\sA_{\Phi} \Bn_0|^{-1} \big| \lesssim |\nabla \BBB_-|, \,\,\,
\big| \nabla^2 |\sA_{\Phi} \Bn_0|^{-1} \big| \lesssim |\nabla \BBB_-|^2 + |\nabla^2\BBB_-|.
\end{equation*}
Thus \eqref{hyp:M1M2_BH} and $r>N$ imply that 
\begin{equation}\label{es:An_0_BH}
\big\| 1- |\sA_{\Phi} \Bn_0|^{-1} \big\|_{L_{\infty}(\BBR^N)} \lesssim M_1,  \,\,\,
\big\| \nabla |\sA_{\Phi} \Bn_0|^{-1} \big\|_{L_r(\BBR^N)} \lesssim M_2, \,\,\,
\big\| \nabla^2 |\sA_{\Phi} \Bn_0|^{-1} \big\|_{L_r(\BBR^N)}  \lesssim M_2M_3.
\end{equation}
For $j,k=1,\dots,N,$ we denote that
\begin{gather*}
\CT_3^0(\lambda)(\BZ_+)=\CF_3(\Bw), \quad
\CT_{3,j}^{1}(\lambda)(\BZ_+)=\pa_j \CF_3(\Bw), \quad 
\CT_{3,jk}^{2}(\BZ_+)=\pa_k \pa_j \CF_3(\Bw).
\end{gather*}
Then \eqref{es:An_0_BH}, Lemma \ref{lemma:ab_BH} and Theorem \ref{thm:GR_half_0} immediately yield that
\begin{equation*}
\CR_{\CL\big( \CY_q(\BBR^N_+); L_q(\BBR^N_+)^{N} \big)} 
\big\{(\tau \pa_{\tau})^{\ell}\CT_{3}^{0}(\lambda):\lambda\in \Gamma_{\ep,\lambda_0,\zeta}\big\}
\leq C  \lambda_0^{-1}M_1,
\end{equation*}
\begin{equation*}
\CR_{\CL\big(\CY_q(\BBR^N_+); L_q(\BBR^N_+)^{N}\big)} 
\big\{(\tau \pa_{\tau})^{\ell}\CT_{3,j}^{1}(\lambda):\lambda\in \Gamma_{\ep,\lambda_0,\zeta}\big\}
\leq C \Big(  \lambda_0^{-1\slash 2}(M_1 + \sigma_0) 
+  \lambda_0^{-1} \sigma_0^{-\frac{N}{r-N}} M_2^{\frac{r}{r-N}}  \Big),
\end{equation*}
\begin{equation*}
\CR_{\CL\big(\CY_q(\BBR^N_+) ; L_q(\BBR^N_+)^{N}\big)} 
\big\{(\tau \pa_{\tau})^{\ell}\CT_{3,jk}^{2}(\lambda):\lambda\in \Gamma_{\ep,\lambda_0,\zeta}\big\}
\leq C \Big( M_1 + \sigma_0
+ \lambda_0^{-1\slash 2} \big(  \sigma_0 M_3 +  \sigma_0^{-\frac{N}{r-N}} (M_2M_3)^{\frac{r}{r-N}} \big) \Big),
\end{equation*}
 for any $0<\sigma_0<1.$ Therefore we infer from  Remark \ref{rmk:R-bounded} and Theorem \ref{thm:GR_half_0} that 
\begin{equation}\label{es:R3_BH}
\CR_{\CL\big(\CY_q(\BBR^N_+);H^2_q(\BBR^N_+)\big)} 
\big\{(\tau \pa_{\tau})^{\ell}\CR_{3}(\lambda):\lambda\in \Gamma_{\ep,\lambda_0,\zeta}\big\}
\leq C \Big( M_1 + \sigma_0
+ \lambda_0^{-1\slash 2} \big( M_3 +  \sigma_0^{-\frac{N}{r-N}} (M_2M_3)^{\frac{r}{r-N}} \big) \Big).
\end{equation}
At last, we can easily conclude \eqref{es:key_Rbd_BH} by combining \eqref{es:R1_BH}, \eqref{es:R2_BH_1}, \eqref{es:R2_BH_2} and \eqref{es:R3_BH}. This completes the proof of \eqref{es:key_Rbd_BH}.

\subsection{Review of other model problems}
To study the model problem in the general domain, let us review some results in \cite{EvBS2014}.
By the notations $\BPhi,$ $\BBB$ and $\BBB_-$ in the beginning of Section \ref{sec:bh}, 
we assume that $\BPhi$ is a $H^2_r$ diffeomorphism for some $N<r<\infty,$
and there exist constants $0<M_1<1\leq M_2<\infty$ such that 
\begin{equation*}
\|(\BBB, \BBB_-)\|_{L_{\infty}(\BBR^N)} \leq M_1,\quad
\|\nabla(\BBB, \BBB_-)\|_{L_{r}(\BBR^N)} \leq M_2.
\end{equation*}
For  $\Omega_+:= \BPhi(\BBR^N_+)$ and $\Gamma_+:= \BPhi(\BBR^N_0),$ we consider the following model problem
\begin{equation}\label{eq:RR_CNS_BH_D}
\left\{\begin{aligned}
& \lambda  \Bv -\gamma_{1}^{-1}\Di \BBS(\Bv) - \lambda^{-1} \gamma_1^{-1} \nabla (\gamma_3 \di \Bv) 
= \Bf &&\quad\hbox{in}\quad  \Omega_+,\\
&\Bv=0 &&\quad\hbox{on}\quad  \Gamma_+, 
	\end{aligned}\right.
\end{equation}
where $\gamma_1$ and $\gamma_3$ satisfy the conditions in \eqref{hyp:gamma_RR_BH}.
\begin{theo}\label{thm:RR_BH_D}
Let $0<\ep<\pi\slash 2,$ $\mu, \nu,\zeta_0>0,$ $1<q<\infty,$ $N<r<\infty$ and $r\geq q.$ 
Assume that $\Omega_+$ is given as above and \eqref{hyp:gamma_RR_BH} is satisfied. 
Then for any $\Bf\in L_q(\Omega_+)^N,$ there exist constants $\lambda_0, r_b \geq 1$ and a family of operators
\begin{align*}
\CA_1(\lambda,\Omega_+) \in 
\Hol\Big(\Gamma_{\ep,\lambda_0,\zeta}; \CL\big(L_q(\Omega_+)^N;H^2_q(\Omega_+)^N \big) \Big),
\end{align*}
such that $\Bv:=\CA_1(\lambda,\Omega_+) \Bf$ is a solution of \eqref{eq:RR_CNS_BH_D}. Moreover, we have 
\begin{gather*}
\CR_{\CL\big(L_q(\Omega_+)^N; H^{2-j}_q(\Omega_+)^N \big)}
 \Big( \Big\{ (\tau \pa_{\tau})^{\ell}\big( \lambda^{j\slash 2}\CA_1(\lambda,\Omega_+)\big) : \lambda \in 
 \Gamma_{\ep,\lambda_0,\zeta} \Big\}\Big) \leq r_b,
\end{gather*}
for $\ell=0,1,$ $j=0,1,2,$ and $\tau := \Im \lambda.$ Above the constants $\lambda_0$ and $r_b$ depend solely on
$\ep,$ $\mu,$ $\nu,$ $q,$ $r,$ $N,$ $\zeta_0,$ $\rho_1,$ $\rho_2,$ $\rho_3.$
\end{theo}
\medskip

Next, let us consider the generalized system in the whole space $\BBR^N,$
\begin{equation}\label{eq:RR_CNS_whole}
 \lambda  \Bv -\gamma_{1}^{-1}\Di \BBS(\Bv) - \lambda^{-1} \gamma_1^{-1} \nabla (\gamma_3 \di \Bv) 
= \Bf \quad\hbox{in}\quad  \BBR^N.
\end{equation}
For \eqref{eq:RR_CNS_whole}, we recall \cite[Theorem 3.11]{EvBS2014} here.
\begin{theo}\label{thm:RR_CNS_whole}
Let $0<\ep<\pi\slash 2,$ $\mu, \nu,\zeta_0>0,$ $1<q<\infty,$ $N<r<\infty$ and $r\geq q.$ 
Assume that $\gamma_1$ and $\gamma_3$ are uniformly continuous functions in $\BBR^N$ and
satisfy \eqref{hyp:gamma_RR_BH} by changing $\Omega_+$ to $\BBR^N.$ 
For any $\Bf\in L_q(\BBR^N)^N,$
there exist constants $\lambda_0,r_b \geq 1$ and a family of operators
\begin{align*}
\CA_2(\lambda,\BBR^N) & \in 
\Hol\Big(\Gamma_{\ep,\lambda_0,\zeta}; \CL\big(L_q(\BBR^N)^N;H^2_q(\BBR^N)^N \big) \Big),
\end{align*}
such that $\Bv:=\CA_2(\lambda,\BBR^N) \Bf$ is a solution of \eqref{eq:RR_CNS_whole}. Moreover, we have 
\begin{gather*}
\CR_{\CL\big(L_q(\BBR^N)^N; H^{2-j}_q(\BBR^N)^N \big)}
 \Big( \Big\{ (\tau \pa_{\tau})^{\ell}\big( \lambda^{j\slash 2}\CA_2(\lambda,\BBR^N)\big) : \lambda \in 
 \Gamma_{\ep,\lambda_0,\zeta} \Big\}\Big) \leq r_b,
\end{gather*}
for $\ell=0,1,$ $j=0,1,2,$ and $\tau := \Im \lambda.$ Above the constants $\lambda_0$ and $r_b$ depend solely on
$\ep,$ $\mu,$ $\nu,$ $q,$ $r,$ $N,$ $\zeta_0,$ $\rho_1,$ $\rho_2,$ $\rho_3.$
\end{theo}
 

\section{Full model problem in the general domain}
This section is dedicated to the study of \eqref{eq:RR_CNS_0}.  After the review of some auxiliary results, we will construct the solutions for model problems by the localization procedure due to Section \ref{sec:bh}, and then establish the leading part of the solution of \eqref{eq:RR_CNS_0}. At last, we will see the remainder part from the parametrix is harmless in the sense of $\CR-$boundedness. 
\subsection{Some auxiliary results}
According to \cite{ES2013}, let us list some properties for the uniformly smooth domain $\Omega$ in the class $W^{3,2}_r.$  
\begin{prop}\label{prop:domain}
Let $\Omega$ is of type $W^{3,2}_r$ by Definition \ref{def:domain} for some $N<r<\infty$ and  $B_{d}(x):=\{y\in \BBR^N: |x-y|<d\}$ be the ball in $\BBR^N$ with radius $d>0.$ 
Then there exist the constants $0<d_1, d_2, d_3<1,$  $0<M_1<1\leq \min\{M_2,M_3\}<\infty,$ at most countably many mappings $\BPhi^i_j$ ($i=0,1$) of the class $H^{3-i}_r(\BBR^N;\BBR^N)$ and points $x_j^0 \in \Gamma_0,$ $x_j^1 \in \Gamma_1,$ $x_j^2 \in \Omega$ such that the following assertions hold:
\begin{enumerate}
\item The mappings $\BPhi^i_j:\BBR^N \rightarrow \BBR^N$ ($i=0,1, j\in \BBN$) are $H^{3-i}_r$ diffeomorphism.
\item Denote $B^i_j:= B_{d_i}(x^i_j)$ ($i=0,1,2, j\in \BBN$) for simplicity and we have 
\begin{gather*}
\Omega = \bigcup_{i=0,1}   \Big( \bigcup_{j\in \BBN}\big( \BPhi_j^i(\BBR^N_+) \cap B^i_j\big) \Big)
\bigcup  \Big( \bigcup_{j\in \BBN} B^2_j \Big), \quad
\bigcup_{j\in \BBN}  B^2_j \subset \Omega,\\
\BPhi_j^i(\BBR^N_+) \cap B^i_j =\Omega \cap B^i_j, \quad \BPhi_j^i(\BBR^N_0) \cap B^i_j =\Gamma_i \cap B^i_j, \,\,\, i=0,1, j \in \BBN.
\end{gather*}
\item There exist $C^{\infty}$ functions $\zeta^i_j, \wt\zeta^i_j$ ($i=0, 1,2, j\in\BBN$) such that 
\begin{gather*}
0 \leq \zij,\tzij \leq 1, \,\,\, \supp \zij,\, \supp \tzij \subset B^i_j,\,\,\, \tzij\zij=\zij,\\
  \|(\zeta^0_j,\wt\zeta^0_j)\|_{C^3(\BBR^N)}  +\sum_{i=1,2} \|(\zij,\tzij)\|_{C^2(\BBR^N)} \leq c_0,\\
\sum_{i=0,1,2}\sum_{j\in\BBN} \zij =1 \,\,\text{on}\,\,\,\overline{\Omega},\quad
\sum_{j\in \BBN}\zeta^0_j=1 \,\,\text{on}\,\,\,\Gamma_0,\quad
\sum_{j\in \BBN}\zeta^1_j=1 \,\,\text{on}\,\,\,\Gamma_1. 
\end{gather*} 
Here the choice of $c_0$ is dependent on $N,r, M_1,M_2,M_3,$ but independent on $j.$ 
\item Denote $\BPsi^i_j:= (\BPhi^i_j)^{-1}$ for $i=0,1$ and $j\in \BBN.$ Then 
\begin{gather*}
\nabla_{\xi} (\BPhi^i_j)^\top (\xi) = \BBA^i_j + \BBB^i_j(\xi),\quad
\nabla_x (\BPsi^i_j)^\top|_{x=\BPhi^i_j(\xi)} = \BBA^i_{j,-} + \BBB^i_{j,-}(\xi) ,
\end{gather*}
where  $\BBA^i_j, \BBA^i_{j,-}$ are orthogonal constant matrices and $\BBB^i_j, \BBB^i_{j,-}$ satisfy
 \begin{equation*}
\|(\BBB^i_j, \BBB^i_{j,-})\|_{L_{\infty}(\BBR^N)} \leq M_1,\,\,\,
\|\nabla(\BBB^i_j, \BBB^i_{j,-})\|_{L_{r}(\BBR^N)} \leq M_2,\,\,\,
\|\nabla^2(\BBB^0_j, \BBB^0_{j,-})\|_{L_{r}(\BBR^N)} \leq M_3.
\end{equation*}
\item There exists an integer $L \geq 2$ such that any $L+1$ distinct balls in $\{B^i_j, i=0,1,2,j\in \BBN\}$ have an empty intersection.
\end{enumerate}
\end{prop}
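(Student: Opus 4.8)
Here is how I would approach Proposition \ref{prop:domain}. The plan is to run, quantitatively, the classical construction of a uniformly locally finite boundary atlas with a subordinate partition of unity for a domain whose boundary is uniformly smooth; the whole point is to keep every constant independent of the base point on $\partial\Omega$ and of the index $j$, which is exactly what the ``uniform'' clauses in Definition \ref{def:domain} (the $x_0$-independence of $\alpha,\beta,K$) are there to supply.

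First I would manufacture the flattening maps. Fix $i\in\{0,1\}$, write $m_0=3$, $m_1=2$, and let $x^i\in\Gamma_i$. By Definition \ref{def:domain} there is a rigid motion after which $\Gamma_i$ is near $x^i$ the graph $y_N=h(y')$ of some $h\in W^{m_i-1/r}_r(B'_\alpha(0))$ with $\|h\|_{W^{m_i-1/r}_r}\le K$, and, translating and rotating once more, $h(0)=0$, $\nabla h(0)=0$. Since $r>N$, the Sobolev embedding $W^{m_i-1/r}_r(\BBR^{N-1})\hookrightarrow C^{m_i-1,\kappa}$ with $\kappa=1-N/r>0$ holds with a constant depending only on $N,r$; hence $\nabla h$ (for $i=1$), resp. $\nabla^2 h$ (for $i=0$), has a uniform Hölder modulus controlled by $K$, so on a ball $B'_\rho(0)$ with $\rho=\rho(N,r,K,M_1)$ we get $\|\nabla h\|_{L_\infty(B'_\rho)}\le M_1$ and $\|h\|_{L_\infty(B'_\rho)}\le M_1\rho$. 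To obtain a diffeomorphism of the \emph{integer} class $H^{m_i}_r$ I would not use the graph map $\xi\mapsto(\xi',\,h(\xi')\mp\xi_N)$ directly, since its top derivative lives only in a negative-order space; instead, because $W^{m_i-1/r}_r(\BBR^{N-1})$ is the trace space of $H^{m_i}_r(\BBR^N)$, I take a regularizing extension $Eh\in H^{m_i}_r(\BBR^N)$ with $Eh|_{\xi_N=0}=h$, $Eh$ smooth off $\{\xi_N=0\}$, and $\|\nabla Eh\|_{L_\infty}$, $\|\nabla^{m_i}Eh\|_{L_r}$ controlled by $\|h\|_{W^{m_i-1/r}_r}$, then set $\BPhi^i_j(\xi):=(\xi',\,\xi_N+Eh)$ localized by a fixed cut-off to $B'_\rho\times(-\rho,\rho)$ and patched with the identity outside (conjugated back by the rigid motion). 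After shrinking $M_1$ once more this gives $\nabla_\xi(\BPhi^i_j)^\top=\BBA^i_j+\BBB^i_j$ with $\BBA^i_j$ the orthogonal matrix recording the motion, $\|\BBB^i_j\|_{L_\infty}\le M_1<1$, $\|\nabla\BBB^i_j\|_{L_r}\le M_2$, and for $i=0$ also $\|\nabla^2\BBB^0_j\|_{L_r}\le M_3$; the quantitative inverse function theorem (Neumann series on $\id+(\BBA^i_j)^{-1}\BBB^i_j$) then produces $\BPsi^i_j=(\BPhi^i_j)^{-1}\in H^{m_i}_r$ with $\nabla_x(\BPsi^i_j)^\top=\BBA^i_{j,-}+\BBB^i_{j,-}$ obeying the same bounds up to a universal factor, and, choosing $d_i\le\rho/2$ smaller than the $\alpha,\beta$ of Definition \ref{def:domain}, $\BPhi^i_j(\BBR^N_0)\cap B^i_j=\Gamma_i\cap B^i_j$ and $\BPhi^i_j(\BBR^N_+)\cap B^i_j=\Omega\cap B^i_j$.

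Next I would build the covering. With $d_0,d_1$ as above, and in addition $d_0,d_1<\tfrac12\dist(\Gamma_0,\Gamma_1)$ (positive by the uniform separation of the two sharp surfaces), pick for each $i=0,1$ a maximal $(d_i/4)$-separated set $\{x^i_j\}_j\subset\Gamma_i$; such a set is automatically countable, and maximality forces the half-radius balls $B_{d_i/2}(x^i_j)$ to cover a fixed neighbourhood $U_i$ of $\Gamma_i$, while a volume/packing estimate on the Lipschitz surface $\Gamma_i$ bounds the number of the $B^i_j:=B_{d_i}(x^i_j)$ meeting any point. For the interior part, fix $d_2$ with $\overline{B_{2d_2}(x)}\subset\Omega$ whenever $\dist(x,\partial\Omega)\ge d_2$, take a maximal $(d_2/4)$-separated set $\{x^2_j\}_j$ in $\Omega\setminus(U_0\cup U_1)$, and put $B^2_j:=B_{d_2}(x^2_j)$; then $\bigcup_j B^2_j\subset\Omega$, the half-balls cover $\Omega\setminus(U_0\cup U_1)$, and again only boundedly many overlap. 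Combining the three packing bounds gives a single integer $L\ge2$ bounding the multiplicity of the whole family $\{B^i_j\}$, with $L$ depending only on $N$ and the ratios of the $d_i$, hence on $N,r,M_1,M_2,M_3$. Then with a fixed radial $C^\infty$ bump $\chi$ equal to $1$ on $B_{1/2}(0)$ and supported in $B_1(0)$ I set $\psi^i_j(x):=\chi\big((x-x^i_j)/d_i\big)$ and $\tzij(x):=\chi\big(2(x-x^i_j)/d_i\big)$, so that $\|\psi^i_j\|_{C^3}$ and $\|\tzij\|_{C^3}$ are bounded uniformly; since the half-balls cover $\overline\Omega$, the locally finite sum $S:=\sum_{i,j}\psi^i_j$ satisfies $1\le S\le Lc_0$ and $\|S\|_{C^3}\le Lc_0$ on $\overline\Omega$, and $\zij:=\psi^i_j/S$ satisfies $\tzij\zij=\zij$, $\sum_{i,j}\zij\equiv1$ on $\overline\Omega$, and $\|\zij\|_{C^3}+\|\tzij\|_{C^3}\le c_0'$ by Leibniz and $S\ge1$. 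Because no $B^1_j$ or $B^2_j$ meets $\Gamma_0$ (by $d_i<\tfrac12\dist(\Gamma_0,\Gamma_1)$ and $B^2_j\subset\Omega$ open) one gets $\sum_j\zeta^0_j\equiv1$ on $\Gamma_0$, and symmetrically $\sum_j\zeta^1_j\equiv1$ on $\Gamma_1$; on $\Gamma_0$ only the $\zeta^0_j,\tzij$ occur in $C^3$, while the $i=1,2$ families are needed only in $C^2$, matching the weaker smoothness $m_1=2$ available there.

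The main obstacle is the first step: producing a boundary-flattening diffeomorphism of exactly the integer class $H^{m_i}_r$ — rather than merely $W^{m_i-1/r}_r$ — out of a $W^{m_i-1/r}_r$ graph function, while simultaneously (a) keeping the flattened boundary \emph{equal} to $\Gamma_i$ inside $B^i_j$ and (b) making $\BBB^i_j$ genuinely small in $L_\infty$ with $j$-independent estimates; this is precisely where the regularizing trace–extension and the uniform Sobolev modulus of continuity (exploiting $r>N$) are both indispensable, and everything else — the Vitali-type covering, the bounded overlap constant $L$, and the normalization of the partition of unity — is routine once these bounds are in hand.
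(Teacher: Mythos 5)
The paper does not prove Proposition~\ref{prop:domain}: it is stated as a catalogue of facts "according to \cite{ES2013}" (Enomoto--Shibata), so there is no in-text argument to compare against. Your blind proposal therefore necessarily diverges from the paper simply by supplying a construction, but the construction you sketch is exactly the standard one that underlies \cite{ES2013} and \cite{Shi2013,Shi2016b}: a maximal $(d_i/4)$-separated net on each $\Gamma_i$ and on the bulk to get a Vitali-type covering with bounded multiplicity $L$, flattening maps obtained by extending the local graph function, and a partition of unity obtained by normalizing translated rescaled bumps by their sum. So the content is right, and more is written down than the paper bothers to record.

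Two points worth flagging, one that you handled well and one that you glossed. The good one: you correctly identified that the naive graph map $\xi\mapsto(\xi',\,h(\xi')-\xi_N)$ is \emph{not} in the integer class $H^{m_i}_r(\BBR^N)$ because its top derivatives are constant in $\xi_N$ and live only in $W^{-1/r}_r$ in $\xi'$; replacing $h$ by a trace extension $Eh\in H^{m_i}_r(\BBR^N)$ (using that $W^{m_i-1/r}_r(\BBR^{N-1})=B^{m_i-1/r}_{r,r}$ is the trace space, in the paper's $W^s_q=B^s_{q,q}$ convention) is exactly what is needed, and the smallness of $\BBB^i_j$ in $L_\infty$ then comes from $\nabla h(0)=0$ and the uniform Hölder modulus $W^{m_i-1/r}_r\hookrightarrow C^{m_i-1,1-N/r}$ (available since $r>N$). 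The glossed point: you invoke "the uniform separation of the two sharp surfaces" to ensure $d_1<\tfrac12\dist(\Gamma_0,\Gamma_1)$, which is not literally stated in Definition~\ref{def:domain}; it does follow, but one should say why — the graph description with $x_0$-independent $\alpha,\beta$ forces $\Omega\cap U_{\alpha,\beta,h}(x_0)$ to be a single graph region of thickness $\beta$, so no other component of $\partial\Omega$ can enter that box, giving $\dist(\Gamma_0,\Gamma_1)\gtrsim\min\{\alpha,\beta\}$. Otherwise the argument is sound (minor slips, such as $S\le L$ rather than $Lc_0$, and the paper's cosmetic $d_1,d_2,d_3$ versus your $d_0,d_1,d_2$ indexing, do not affect it).
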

Let us give some useful comments on Proposition \ref{prop:domain}.
\begin{itemize}
\item Thanks to \eqref{hyp:gamma_GR_2}, we assume that 
\begin{equation}\label{hyp:gamma_Om_2}
\sum_{\alpha=1,3} \|\gamma_{\alpha}(\cdot) - \gamma_{\alpha}(x^i_j) \|_{L_{\infty} ( \Omega \cap B^i_j)} \leq M_1,  
\,\,\, \sum_{\alpha=1,3} \|\nabla \gamma_\alpha\|_{L_r(\Omega \cap B^i_j)} \leq M_2,
\end{equation}
up the choices of $d_i$ and $M_2.$ 

\item On the other hand, by the last property of Proposition \ref{prop:domain}, we have 
\begin{equation}\label{es:fip}
 \big(  \sum_{i=0,1,2} \sum_{j\in \BBN} \|f\|^r_{L_r(\Omega \cap B^i_j)}  \big) ^{1\slash r}\leq C_{r,L} \|f\|_{L_r(\Omega)}
\end{equation}
for any $f\in L_r(\Omega)$ and $1\leq r<\infty.$ 
In particular, we infer from \eqref{es:fip} that 
\begin{equation}\label{es:fip_2}
 \big(  \sum_{i=0,1,2} \sum_{j\in \BBN} \|\BF\|^q_{\CY_q(\Omega \cap B^i_j)}  \big) ^{1\slash q}\leq C_{q,L} \|\BF\|_{\CY_q(\Omega)}, 
\end{equation}
for any $\BF \in \CY_q(\Omega)$ and $1<q<\infty.$
\end{itemize}

For $\Omega$ given by Proposition \ref{prop:domain}, we adopt the notations
\begin{equation}\label{eq:notation_Om}
\Omega^i_j := \BPhi^i_j(\BBR^N_+), \,\,\, 
\Gamma^i_j := \pa \Omega^i_j = \BPhi^i_j(\BBR^N_0), \,\,\,
\Omega^2_j:= \BBR^N, \,\,\, \forall \,\, i=0,1, \,\, j \in \BBN,
\end{equation}
and denote $\Bn_j^0$ for the unit normal vectors subject to $\Gamma^0_j$ in what follows. 
Now we recall the following results proved in \cite[Section 9.5.1]{Shi2016b}.
\begin{prop}\label{prop:sum}
Assume that $\Omega$ satisfies Proposition \ref{prop:domain}, $1 < q <\infty,$ $i=0,1,2,$ and $n \in \BBN_0.$ 
For any $j\in \BBN$ and $i=0,1,2,$ 
take $\eta^i_j \in C_0^{\infty}(B^i_j)$ and  $\{f^i_j : f^i_j  \in H^n_q(\Omega^i_j)\}_{j\in \BBN}$  such that 
\begin{equation*}
\|\eta^i_j\|_{C^n(\BBR^N)} \leq c_1, \quad
\big\| \big\{  \|f^i_j\|_{H^n_q(\Omega^i_j)} \big\}_{j\in \BBN} \big\|_{\ell_q(\BBN)} < \infty,
\end{equation*}
with some constant $c_1$ independent on $i$ and $j.$
Then the infinite sum $f^i:= \sum_{j\in \BBN} \eta^i_j f^i_j $ exists in $H^n_q(\Omega)$ for any $i=0,1,2,$  fulfilling 
\begin{equation*}
\|f^i\|_{H^n_q(\Omega)} \leq C_{n,q,L}c_1
\big\| \|f^i_j\|_{H^n_q(\Omega^i_j)} \big\|_{\ell_q(\BBN)}.
\end{equation*}
\end{prop}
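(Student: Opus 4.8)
The plan is to treat each index $i\in\{0,1,2\}$ separately and, for fixed $i$, to reduce the statement to a routine summation estimate for families of functions with uniformly bounded overlap. First I would give meaning to the individual summands: since $\supp\eta^i_j\subset B^i_j$ and, by Proposition~\ref{prop:domain}, $\Omega^i_j\cap B^i_j=\Omega\cap B^i_j$ for $i=0,1$ (while $B^2_j\subset\Omega=\Omega^2_j$), the product $g^i_j:=\eta^i_j f^i_j$ is supported in $\Omega\cap B^i_j$ and, extended by zero outside that set, defines an element of $H^n_q(\Omega)$. The Leibniz rule together with $\|\eta^i_j\|_{C^n(\BBR^N)}\le c_1$ then yields
\[
\|g^i_j\|_{H^n_q(\Omega)}\le C_n\,c_1\,\|f^i_j\|_{H^n_q(\Omega^i_j)},
\]
with $C_n$ depending only on $n$; in particular each derivative $\pa_x^{\alpha}g^i_j$ with $|\alpha|\le n$ is supported in $B^i_j$.

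Next I would invoke property~(5) of Proposition~\ref{prop:domain}: any $L+1$ of the balls $B^i_j$ have empty intersection, so at each point $x\in\Omega$ at most $L$ of the functions $\{g^i_j\}_{j\in\BBN}$ are nonzero. Hence Hölder's inequality with respect to the counting measure gives, for every multi-index $\alpha$ with $|\alpha|\le n$ and a.e.\ $x\in\Omega$,
\[
\Big|\sum_{j\in\BBN}\pa_x^{\alpha}g^i_j(x)\Big|^{q}\le L^{q-1}\sum_{j\in\BBN}\big|\pa_x^{\alpha}g^i_j(x)\big|^{q}.
\]
Integrating over $\Omega$, summing over $|\alpha|\le n$, and using the estimate of the previous paragraph, I obtain
\[
\Big\|\sum_{j\le M}g^i_j\Big\|_{H^n_q(\Omega)}^{q}\le C_{n,q,L}\,c_1^{q}\sum_{j\le M}\|f^i_j\|_{H^n_q(\Omega^i_j)}^{q}
\]
uniformly in $M$. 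Applied to a tail $\sum_{M'<j\le M}g^i_j$, the same inequality shows that the partial sums form a Cauchy sequence in $H^n_q(\Omega)$, because $\big\{\|f^i_j\|_{H^n_q(\Omega^i_j)}\big\}_{j\in\BBN}\in\ell_q(\BBN)$. Therefore $f^i=\sum_{j\in\BBN}\eta^i_j f^i_j$ exists in $H^n_q(\Omega)$ and satisfies the asserted bound after taking $q$-th roots.

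The argument is essentially bookkeeping, and it is the synthesis counterpart of the decomposition estimate \eqref{es:fip}. The only points deserving care are the reinterpretation of $\eta^i_j f^i_j$, which a priori lives only on $\Omega^i_j$, as a well-defined element of $H^n_q(\Omega)$ — this is where the compatibility $\Omega^i_j\cap B^i_j=\Omega\cap B^i_j$ and the localization by $\eta^i_j$ (compactly supported in the open ball) are essential — and the tracking of the overlap constant $L$ through the Hölder step, which is what produces the factor $L^{q-1}$ absorbed into $C_{n,q,L}$. I do not expect any genuine analytic obstacle.
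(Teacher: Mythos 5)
Your proof is correct. The paper itself does not prove Proposition~\ref{prop:sum}---it is quoted from \cite[Section~9.5.1]{Shi2016b}---so there is no internal argument to compare against; your reasoning, which combines the finite-overlap property~(5) of Proposition~\ref{prop:domain} with the discrete power-mean inequality $\big|\sum_{k\le L}a_k\big|^q\le L^{q-1}\sum_{k\le L}|a_k|^q$, the Leibniz rule for $\eta^i_j$, and the zero extension of $\eta^i_j f^i_j$ to $\Omega$ (legitimate because $\eta^i_j$ is compactly supported in the open ball $B^i_j$ and $\Omega^i_j\cap B^i_j=\Omega\cap B^i_j$), is the standard synthesis estimate dual to \eqref{es:fip}, and every step holds.
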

Let us end up this part with some comment on the unit normal vector $\Bn_{\Gamma_0}$ to $\Gamma_0.$
We regard $\Bn_{\Gamma_0}$ as its natural extension to $\Omega$ through 
$\Bn_{\Gamma_0}= \sum_{j\in \BBN} \zeta^0_j \Bn^0_j.$
In addition, Proposition \ref{prop:domain} yields that
\begin{equation}\label{es:normal_0j}
\|\Bn^0_j\|_{L_{\infty}(\BBR^N)} \leq 1, \quad
\| \nabla \Bn^0_j\|_{L_r(\BBR^N)} \leq C_{N} M_2, \quad 
\|\nabla \Bn^0_j\|_{H^1_r(\BBR^N)} \leq C_{N} M_2 M_3.
\end{equation}
Thanks to Proposition \ref{prop:sum}, \eqref{es:fip} and \eqref{es:normal_0j}, it is not hard to see that
\begin{equation}\label{es:normal_0}
\|f \Bn_{\Gamma_0}\|_{H^m_q(\Omega)} \leq C_{N,q,L} c_0 M_2M_3 \|f\|_{H^m_q(\Omega)}, \,\,\, \forall\,\,f \in H^m_q(\Omega),
\end{equation}
with $m=0,1,2,$ and $1<q\leq r.$

\subsection{Localization}
Recall the notations in \eqref{eq:notation_Om} and set that
\begin{equation}\label{eq:def_gamma_ij}
\gamma^i_{j\alpha}(x):= \big( \gamma_{\alpha} (x) - \gamma_{\alpha} (x^i_j) \big) \tzij (x) + \gamma_{\alpha} (x^i_j), 
\,\,\forall \,\, x\in \Omega^i_j, \,\, \alpha=1,3,\,\, i=0,1,2.
\end{equation}
Then it is not hard to see from \eqref{hyp:gamma_GR_2}, \eqref{hyp:gamma_Om_2} and Proposition \ref{prop:domain} that 
\begin{gather}\label{hyp:gamma_ij}
0<\rho_1 \leq \gamma^i_{j1} (x) \leq  \rho_2, \,\,\,
0 < \gamma^i_{j3}(x) \leq \rho_3, \,\,\, \forall \,\,x \in \overline{\Omega^i_j}, \\ \nonumber
\sum_{\alpha=1,3} \|\gamma^i_{j\alpha}(\cdot) - \gamma^i_{j\alpha}(x^i_j) \|_{L_{\infty} ( \Omega^i_j)} \leq M_1,  
\,\,\, \sum_{\alpha=1,3} \|\nabla \gamma_\alpha\|_{L_r(\Omega^i_j)} \leq M_2+C_{r,N}c_0.
\end{gather}
Then we consider the following model problems for any $j\in \BBN,$
\begin{equation}\label{eq:RR_CNS_loc_0}
\left\{\begin{aligned}
& \lambda  \Bv_j^0 -(\gamma_{j1}^0)^{-1}\Di \big( \BBS(\Bv^0_j) + \zeta \gamma_{j3}^0 \di \Bv^0_j \BBI\big)  =\wt\zeta^0_j \Bf 
&&\quad\hbox{in}\quad  \Omega^0_j,\\
& \big( \BBS(\Bv^0_j) +\zeta \gamma^0_{j3} \di \Bv^0_j \BBI\big)\Bn^0_{j}
+\sigma (m-\Delta_{\Gamma^0_j})h^0_j \,\Bn^0_{j} =\wt\zeta^0_j  \Bg 
&&\quad\hbox{on}\quad  \Gamma^0_j, \\
&\lambda  h^0_j - \Bv^0_j \cdot \Bn^0_{j}  = \wt\zeta^0_j  k 
&&\quad\hbox{on}\quad  \Gamma^0_j,
\end{aligned}\right.
\end{equation}

\begin{equation}\label{eq:RR_CNS_loc_1}
\left\{\begin{aligned}
& \lambda  \Bv_j^1 -(\gamma_{j1}^1)^{-1}\Di \big( \BBS(\Bv^1_j) +\zeta \gamma_{j3}^1 \di \Bv^1_j \BBI \big) =\wt\zeta^1_j \Bf 
&&\quad\hbox{in}\quad  \Omega^1_j,\\
& \Bv^1_j =0
&&\quad\hbox{on}\quad  \Gamma^1_j, 
\end{aligned}\right.
\end{equation}

\begin{equation}\label{eq:RR_CNS_loc_2}
\begin{aligned}
& \lambda  \Bv_j^2 -(\gamma_{j1}^2)^{-1} \Di \big( \BBS(\Bv^2_j) + \zeta \gamma_{j3}^2 \di \Bv^2_j \BBI \big)=\wt\zeta^2_j \Bf 
&&\quad\hbox{in}\quad  \Omega^2_j.
\end{aligned}
\end{equation}
Thanks to \eqref{hyp:gamma_ij},  Theorem \ref{thm:GR_BH}, Theorem \ref{thm:RR_BH_D} and Theorem \ref{thm:RR_CNS_whole}, there exist constants $\lambda_0,r_b \geq 1$ and the families of operators 
\begin{align*}
\CA_0(\lambda,\Omega^0_j) & \in 
\Hol\Big(\Gamma_{\ep,\lambda_0,\zeta}; \CL\big(\CY_q(\Omega^0_j);H^2_q(\Omega^0_j)^N \big) \Big),\\
\CH_0(\lambda,\Omega^0_j ) & \in 
 \Hol\Big(\Gamma_{\ep,\lambda_0,\zeta}; \CL\big(\CY_q(\Omega^0_j);H^3_q(\Omega^0_j)\big) \Big),\\
 \CA_{i}(\lambda,\Omega^{i}_j) & \in 
\Hol\Big(\Gamma_{\ep,\lambda_0,\zeta}; \CL\big(L_q(\Omega^i_j)^N;H^2_q(\Omega^i_j)^N \big) \Big) 
\quad (i=1,2),
\end{align*}
such that 
\begin{align*}
\Bv^0_j &:= \CA_0(\lambda,\Omega^0_j) 
\big( \wt\zeta^0_j\Bf,\lambda^{1\slash 2}\wt\zeta^0_j\Bg,\wt\zeta^0_j \Bg,\wt\zeta^0_j k \big), \\
h^0_j &:= \CH_0(\lambda,\Omega^0_j) 
\big( \wt\zeta^0_j\Bf,\lambda^{1\slash 2}\wt\zeta^0_j\Bg,\wt\zeta^0_j \Bg,\wt\zeta^0_j k \big), \\
\Bv^i_j &:= \CA_i(\lambda,\Omega^i_j) \wt\zeta^i_j\Bf \quad (i=1,2),
\end{align*}
satisfy \eqref{eq:RR_CNS_loc_0}, \eqref{eq:RR_CNS_loc_1} and \eqref{eq:RR_CNS_loc_2} respectively.
Moreover, we have  
 \begin{gather}
\CR_{\CL\big(\CY_q(\Omega^0_j); H^{2-k}_q(\Omega^0_j)^N \big)}
 \Big( \Big\{ (\tau \pa_{\tau})^{\ell}\big( \lambda^{k\slash 2}\CA_0(\lambda,\Omega^0_j)\big) : \lambda \in 
 \Gamma_{\ep,\lambda_0,\zeta} \Big\}\Big) \leq r_{b}, \nonumber \\
 \CR_{\CL\big(\CY_q(\Omega^0_j); H^{3-k'}_q(\Omega^0_j) \big)}
 \Big( \Big\{ (\tau \pa_{\tau})^{\ell}\big( \lambda^{k'}\CH_0(\lambda,\Omega^0_j)\big) : \lambda \in 
 \Gamma_{\ep,\lambda_0,\zeta} \Big\}\Big) \leq r_{b},  \label{es:Rbdd_loc}  \\  
 \CR_{\CL\big(L_q(\Omega^i_j)^N; H^{2-k}_q(\Omega^i_j)^N \big)}
 \Big( \Big\{ (\tau \pa_{\tau})^{\ell}\big( \lambda^{k\slash 2}\CA_i(\lambda,\Omega^i_j)\big) : \lambda \in 
 \Gamma_{\ep,\lambda_0,\zeta} \Big\}\Big) \leq r_{b}, \nonumber
\end{gather}
 for $i=1,2,$ $k=0,1,2,$ $k',\ell=0,1,$  $\tau := \Im \lambda$ and $j\in \BBN.$
 The constants $\lambda_0$ and $r_b$ depend solely on
$\ep,$ $\sigma,$ $m,$ $\mu,$ $\nu,$ $\zeta_0,$ $q,$ $r,$ $N,$ $\rho_1,$ $\rho_2,$ $\rho_3$ and $\Omega.$ 
In particular, we have
\begin{gather}
\sum_{k=0,1,2} \|\lambda^{k\slash 2} \Bv^0_j \|_{H^{2-k}_q (\Omega^0_j)}
+ \sum_{k' =0,1} \| \lambda^{k'} h^0_j\|_{H^{3-k'}_q(\Omega^0_j)}
\leq 5r_b (1+2c_0)\| (\Bf,\lambda^{1\slash 2}\Bg,\Bg,k ) \|_{\CY_q(\Omega \cap B^0_j)},
\nonumber \\ \label{es:RR_loc_1}
\sum_{i=1,2}\sum_{k=0,1,2} \|\lambda^{k\slash 2} \Bv^i_j \|_{H^{2-k}_q (\Omega^i_j)}
\leq 6r_b \|\Bf\|_{L_q(\Omega \cap B^i_j)}, \,\,\,\forall j\in \BBN,
\end{gather}
where we have used the fact that
\begin{equation*}
\big\| \wt\zeta^i_j \BF \big\|_{\CY_q(\Omega^i_j)}
\leq (1+2c_0) \|\BF\|_{\CY_q(\Omega \cap B^i_j)}, 
 \,\,\, \text{for all}\,\,\, \BF \in \CY_q(\Omega)
 \,\,\, \text{and}\,\,\,i=0,1,2 .
\end{equation*}

\subsection{Construction of a parametrix}
Now we define $F_{\lambda}(\Bf,\Bg,k):= ( \Bf,\lambda^{1\slash 2}\Bg, \Bg, k)^{\top}$ 
for any $(\Bf,\Bg,k) \in Y_q(\Omega)$ and $\lambda \in \Gamma_{\ep,\lambda_0,\zeta},$ 
and then introduce 
 \begin{align*}
 \Bv&=\CA_{\Fp}(\lambda) F_{\lambda}(\Bf, \Bg,k) :=\sum_{i\in\{0,1,2\}} \sum_{j\in \BBN} \zeta^i_j \Bv^i_j \\
 &= \sum_{j\in \BBN} \zeta^0_j \CA_0(\lambda,\Omega^0_j) 
\big( \wt\zeta^0_j\Bf,\lambda^{1\slash 2}\wt\zeta^0_j\Bg,\wt\zeta^0_j \Bg,\wt\zeta^0_j k \big)
+\sum_{i\in\{1,2\}} \sum_{j\in \BBN}  \zeta^i_j \CA_i(\lambda,\Omega^i_j) \wt\zeta^i_j\Bf,\\
h &= \CH_{\Fp}(\lambda) F_{\lambda}(\Bf, \Bg,k)  :=  \sum_{j\in \BBN} \zeta^0_j h^0_j
=\sum_{j\in \BBN} \zeta^0_j  \CH_0(\lambda,\Omega^0_j) 
\big( \wt\zeta^0_j\Bf,\lambda^{1\slash 2}\wt\zeta^0_j\Bg,\wt\zeta^0_j \Bg,\wt\zeta^0_j k \big).
 \end{align*}
 By Proposition \ref{prop:sum}, \eqref{es:fip_2} and \eqref{es:RR_loc_1}, we have $\Bv \in H^2_q(\Omega)^N$ and $h\in H^3_q(\Omega).$ 
 Moreover, according to \eqref{eq:def_gamma_ij}, \eqref{eq:RR_CNS_loc_0}, \eqref{eq:RR_CNS_loc_1} and \eqref{eq:RR_CNS_loc_2}, 
 $\Bv$ and $h$ satisfy 
\begin{equation}\label{eq:RR_CNS_para}
\left\{\begin{aligned}
& \lambda  \Bv -\gamma_{1}^{-1}\Di \big( \BBS(\Bv) + \zeta \gamma_3 \di \Bv \BBI \big)
= \Bf - \CV_1 (\lambda) F_{\lambda}(\Bf,\Bg,k) &&\quad\hbox{in}\quad  \Omega,\\
& \big( \BBS(\Bv)  +\zeta \gamma_3 \di \Bv \BBI \big)\Bn_{\Gamma_{0}}
+\sigma (m-\Delta_{\Gamma_{0}})h \,\Bn_{\Gamma_{0}} = \Bg- \CV_2(\lambda) F_{\lambda}(\Bf,\Bg,k)  &&\quad\hbox{on}\quad  \Gamma_0, \\
&\lambda  h - \Bv \cdot \Bn_{\Gamma_0}  = k  &&\quad\hbox{on}\quad  \Gamma_0,\\
&\Bv  = \0 &&\quad\hbox{on}\quad  \Gamma_1,\\
	\end{aligned}\right.
\end{equation} 
 where the operators $\CV_1 (\lambda)$ and $\CV_2(\lambda)$ are given by
\begin{align*}
 \CV_1(\lambda)F_{\lambda} (\Bf,\Bg,k) 
 := &\sum_{i\in \{0,1,2\}}\sum_{j\in \BBN} (\gamma^i_{j1})^{-1} \Big(   \Di \BBS(\zeta^i_j \Bv^i_j) - \zeta^i_j \Di \BBS(\Bv^i_j)  \\
  &  +  \zeta \Di \big( \gamma^i_{j3} \di (\zeta^i_j \Bv^i_j) \BBI \big)  
  - \zeta \zeta^i_j \Di \big(\gamma^i_{j3}\di \Bv^i_j \BBI \big) \Big),\\
\CV_2(\lambda) F_{\lambda}(\Bf,\Bg,k)  
:=&-\sum_{j\in \BBN} \Big( \BBS(\zeta^0_j \Bv^0_j) - \zeta^0_j \BBS(\Bv^0_j)  
+ \zeta \gamma^0_{j3} \big( \di (\zeta^0_j \Bv^0_j) -\zeta^0_j \di \Bv^0_j \big) \BBI\Big) \Bn^0_j \\
 &  +  \sum_{j\in \BBN} \big( \Delta_{\Gamma^0_j} (\zeta^0_j h^0_j) -\zeta^0_j \Delta_{\Gamma^0_j}  h^0_j  \big) \Bn^0_j.
\end{align*}

By the definitions of $\CA_{\Fp}(\lambda)$ and $\CH_{\Fp}(\lambda),$ we can infer from Proposition \ref{prop:sum}, \eqref{es:Rbdd_loc} and \eqref{es:fip_2} that,
\begin{align*}
\CA_{\Fp}(\lambda) & \in 
\Hol\Big(\Gamma_{\ep,\lambda_0,\zeta}; \CL\big(\CY_q(\Omega);H^2_q(\Omega)^N \big) \Big),\\
\CH_{\Fp}(\lambda) & \in 
 \Hol\Big(\Gamma_{\ep,\lambda_0,\zeta}; \CL\big(\CY_q(\Omega);H^3_q(\Omega) \big) \Big).
\end{align*}
In addition, there exists some constant $C$ depending solely on $q,L$ and $c_0$ such that
 \begin{gather}\label{es:R-bdd_para}
\CR_{\CL\big(\CY_q(\Omega); H^{2-k}_q(\Omega)^N \big)}
 \Big( \Big\{ (\tau \pa_{\tau})^{\ell}\big( \lambda^{k\slash 2}\CA_{\Fp}(\lambda)\big) : \lambda \in 
 \Gamma_{\ep,\lambda_0,\zeta} \Big\}\Big) \leq C r_b,\\ \nonumber
 \CR_{\CL\big(\CY_q(\Omega); H^{3-k'}_q(\Omega) \big)}
 \Big( \Big\{ (\tau \pa_{\tau})^{\ell}\big( \lambda^{k'}\CH_{\Fp}(\lambda)\big) : \lambda \in 
 \Gamma_{\ep,\lambda_0,\zeta} \Big\}\Big) \leq C r_b,
\end{gather}
 for any $\ell,k'=0,1,$ $k=0,1,2,$ and $\tau := \Im \lambda.$
 \smallbreak 
 
 Here we just prove the estimates of $\CH_{\Fp}(\lambda)$ for instance. Take any $N_0 \in \BBN,$ $\Fa=1,\dots,N_0,$ $\BF_{\Fa} \in \CY_q(\Omega),$ the Rademacher functions $r_{\Fa}.$ Then Proposition \ref{prop:sum} gives us that
\begin{equation*}
\Big\| \sum_{\Fa=1}^{N_0} r_{\Fa} (\cdot) (\tau \pa_\tau )^{\ell} \big(\lambda^{k'}_{\Fa} \CH_{\Fp}(\lambda_{\Fa})\BF_{\Fa} \big) \Big\|_{H^{3-k'}_q(\Omega)}^q   \leq C \sum_{j\in \BBN}
\Big\| \sum_{\Fa=1}^{N_0} r_{\Fa} (\cdot) (\tau \pa_\tau )^{\ell} 
\big(\lambda^{k'}_{\Fa} \CH_0(\lambda_{\Fa},\Omega^0_j)(\wt\zeta^0_j\BF_{\Fa}) \big),
 \Big\|_{H^{3-k'}_q(\Omega^0_j)}^q 
\end{equation*}
for some constant $C =C (q,L,c_0).$
Combining above bound, \eqref{es:Rbdd_loc}, Minkowski inequalities and \eqref{es:fip_2} yields that 
\begin{align*}
\Big\| \sum_{\Fa=1}^{N_0} r_{\Fa} (\cdot) (\tau \pa_\tau )^{\ell} 
\big(\lambda^{k'}_{\Fa} \CH_{\Fp}(\lambda_{\Fa})\BF_{\Fa} \big) \Big\|_{ L_q([0,1]; H^{3-k'}_q(\Omega))} 
& \leq C r_b \Big( \sum_{j\in \BBN}
\Big\| \sum_{\Fa=1}^{N_0} r_{\Fa} (\cdot) \BF_{\Fa}  \Big\|_{L_q([0,1];\CY_q(\Omega \cap B^0_j)) }^q \Big) ^{1\slash q}\\
& \leq C r_b \Big\| \sum_{\Fa=1}^{N_0} r_{\Fa} (\cdot) \BF_{\Fa}  \Big\|_{L_q([0,1];\CY_q(\Omega))}.
\end{align*}
This completes the study of $\CH_{\Fp}(\lambda).$ 
\medskip

Next we denote $\CV(\lambda):= \big( \CV_1(\lambda), \CV_2(\lambda),0 \big)^{\top}$ and claim
 \begin{equation} \label{es:key_Rbd_FV}
\CR_{\CL(\CY_q(\Omega))} 
\big\{  (\tau \pa_{\tau})^{\ell}F_{\lambda}\CV(\lambda):\lambda\in \Gamma_{\ep,\lambda_0,\zeta}\big\} 
\leq C r_b \lambda_0^{-1\slash 2} , 
\end{equation}
 whose proof is postponed to the next subsection. 
 By choosing $\lambda_0$ in \eqref{es:key_Rbd_FV} large enough, we have 
\begin{equation}\label{es:FV_1}
\|F_{\lambda} \CV(\lambda) F_{\lambda}(\Bf,\Bg,k)\|_{\CY_q(\Omega)} 
\leq 1\slash 2 \|F_{\lambda}(\Bf,\Bg,k)\|_{\CY_q(\Omega)},
\,\,\, \forall \,\, \lambda \in \Gamma_{\ep,\lambda_0,\zeta}.
\end{equation} 
 \begin{equation}\label{es:FV_2}
\CR_{\CL(\CY_q(\Omega))} 
\big\{  (\tau \pa_{\tau})^{\ell}\big(Id - F_{\lambda}\CV(\lambda) \big)^{-1}:
\lambda\in \Gamma_{\ep,\lambda_0,\zeta}\big\} \leq 2.
\end{equation} 
Thanks to \eqref{es:FV_1}, $Id-\CV(\lambda)F_{\lambda}$ is invertible on the space $Y_{q,\lambda}(\Omega)$ (cf. $Y_{q,\lambda}(\BBR^N_+)$ in Sect. \ref{sec:bh}). Let us set
 \begin{equation*}
\Bv:= \CA_{\Fp}(\lambda) F_{\lambda} \big( Id  -\CV(\lambda)F_{\lambda} \big)^{-1}(\Bf,\Bg,k) , \quad 
h:= \CH_{\Fp}(\lambda) F_{\lambda} \big( Id  -\CV(\lambda)F_{\lambda} \big)^{-1}(\Bf,\Bg,k).
 \end{equation*}
Then $\Bv$ and $h$ satisfy \eqref{eq:RR_CNS_0}. Note the fact that 
 \begin{equation*}
F_{\lambda} \big( Id  -\CV(\lambda)F_{\lambda} \big)^{-1} 
=\sum_{j=0}^{\infty} F_{\lambda} \big(\CV(\lambda)F_{\lambda}\big)^j
= \big( Id -F_{\lambda}\CV(\lambda) \big)^{-1} F_{\lambda}.
\end{equation*}
Then $\CA_0(\lambda,\Omega):= \CA_{\Fp}(\lambda) \big( Id -F_{\lambda}\CV(\lambda) \big)^{-1}$ and 
$\CH_0(\lambda,\Omega):= \CH_{\Fp}(\lambda) \big( Id -F_{\lambda}\CV(\lambda) \big)^{-1}$ are desired operators due to 
\eqref{es:R-bdd_para} and \eqref{es:FV_2}.

 \subsection{Proof of \eqref{es:key_Rbd_FV}}
 For any $\BF= (\BF^1,\BF^2,\BF^3,F^4) \in \CY_q(\Omega),$ denote that
 \begin{equation*}
 \CS^0_j (\lambda)\BF:= \CA_0(\lambda,\Omega^0_j)(\wt \zeta^0_j \BF), \,\,\,
  \CT^0_j (\lambda)\BF:= \CH_0(\lambda,\Omega^0_j)(\wt \zeta^0_j \BF),\,\,\,
  \CS^i_j (\lambda)\BF:= \CA_i(\lambda,\Omega^i_j)(\wt \zeta^i_j \BF^1)
 \end{equation*}
for $i=1,2.$ Then we have 
 \begin{align*}
 \CV_1(\lambda) \BF =&  \sum_{i=0,1,2}\sum_{j\in \BBN} (\gamma_{j1}^i)^{-1} 
 \Big( \BBS\big( \CS^i_j (\lambda)\BF\big) 
 +\zeta \gamma^i_{j3} \di  \big( \CS^i_j(\lambda)\BF \big) \BBI \Big)\nabla \zeta^i_j\\
 &+ \sum_{i=0,1,2}\sum_{j\in \BBN}  (\gamma_{j1}^i)^{-1} \Di \Big( 
 \mu \big( \nabla \zeta^i_j \otimes \CS^i_j(\lambda)\BF +  \CS^i_j(\lambda)\BF \otimes  \nabla \zeta^i_j \big)
 \Big) \\ 
  & + \sum_{i=0,1,2}\sum_{j\in \BBN}  (\gamma_{j1}^i)^{-1} \Di \Big( 
 (\nu-\mu + \zeta \gamma^i_{j3}) \big(  \CS^i_j(\lambda)\BF  \cdot \nabla \zeta^i_j\big) \BBI \Big),
 \end{align*}
 
 \begin{align*}
 \CV_2(\lambda)\BF=&- \sum_{j\in \BBN} \mu 
 \big( \nabla \zeta^0_j \otimes \CS^0_j(\lambda)\BF +  \CS^0_j(\lambda)\BF \otimes  \nabla \zeta^0_j \big) \Bn^0_j\\ 
&- \sum_{j\in \BBN} (\nu-\mu + \zeta \gamma^0_{j3}) \big(  \CS^0_j(\lambda)\BF  \cdot \nabla \zeta^0_j\big) \Bn^0_j\\
& + \sum_{j \in \BBN} \big( (\wt\Delta_{\Gamma^0_j} \zeta^0_j) \CT^0_j (\lambda)\BF  
+ 2 \wt\nabla_{\Gamma^0_j}\zeta^0_j \cdot \wt\nabla_{\Gamma^0_j}\CT^0_j (\lambda)\BF \big)  \Bn^0_j.
 \end{align*}
 where $\wt\nabla_{\Gamma^0_j} f := \Pi_{\Gamma^0_j}\nabla f,$ $\Pi_{\Gamma^0_j} := \BBI - \Bn^0_j \otimes \Bn^0_j$ and 
\begin{equation*}
\wt \Delta_{\Gamma^0_j} f := \Delta f - \tr (\Pi_{\Gamma^0_j} \nabla (\Bn^0_j)^{\top}) (\Bn^0_j \nabla f) - (\Bn^0_j)^{\top} (\nabla^2 f) \Bn^0_j,
\end{equation*}
for any smooth function $f$ defined near $\Gamma^0_j.$
 In the formula of $\CV_2(\lambda),$ we have used the fact that 
\begin{equation*}
\nabla_{\Gamma^0_j} f = \wt \nabla_{\Gamma^0_j} f, \,\,\, 
\Delta_{\Gamma^0_j} f  = \wt \Delta_{\Gamma^0_j} f \,\,\, \text{on} \,\,\, \Gamma^0_j.
\end{equation*}

Furthermore, \eqref{es:normal_0j} immediately yields that
\begin{align} \label{es:cut-off_0j}
\big\|\wt\nabla_{\Gamma^0_j} \zeta^0_j \big\|_{H^1_{\infty}(\BBR^N)} 
+\big\|\wt\Delta_{\Gamma^0_j} \zeta^0_j \big\|_{L_{\infty}(\BBR^N)} \leq C_{N} M_2M_3 c_0.
\end{align}
Then thanks to Lemma \ref{lemma:ab_BH},  \eqref{es:Rbdd_loc}, \eqref{es:normal_0j}, \eqref{es:cut-off_0j}  and \eqref{es:fip_2}, it is not hard to see that for any $0<\sigma_0<1$ and $\ell,\ell'=0,1,$
 \begin{equation} \label{es:key_Rbd_FV_1}
\CR_{\CL(\CY_q(\Omega);L_q(\Omega)^N )} 
\big\{  (\tau \pa_{\tau})^{\ell}\CV_1(\lambda):\lambda\in \Gamma_{\ep,\lambda_0,\zeta}\big\} 
\leq C r_b\big(  \lambda_0^{-1\slash 2} + \sigma_0^{-N \slash (r-N)} \lambda_0^{-1}\big),
\end{equation}
 \begin{equation} \label{es:key_Rbd_FV_2_1}
\CR_{\CL(\CY_q(\Omega);L_q(\Omega)^N )} 
\big\{  (\tau \pa_{\tau})^{\ell}\lambda^{\ell'\slash 2}\CV_2(\lambda):\lambda\in \Gamma_{\ep,\lambda_0,\zeta}\big\} 
\leq C r_b \lambda_0^{-1+\ell'\slash 2},
\end{equation}
 \begin{equation} \label{es:key_Rbd_FV_2_2}
\CR_{\CL(\CY_q(\Omega);H^1_q(\Omega)^N )} 
\big\{  (\tau \pa_{\tau})^{\ell} \CV_2(\lambda):\lambda\in \Gamma_{\ep,\lambda_0,\zeta}\big\} 
\leq  C r_b\big(  \lambda_0^{-1\slash 2} + \sigma_0^{-N \slash (r-N)} \lambda_0^{-1}\big),
\end{equation}
with some constant $C$ depending on $N,r,q,L,c_0, \mu,\nu,\zeta_0,\rho_1,\rho_2,\rho_3,M_2,M_3.$ 

Here, for instance, we only consider
$$ \CV^{\alpha}_{2,stk}(\lambda) \BF 
:= \sum_{j\in \BBN} \big( \delta_{st} - (\Bn^0_j)_s(\Bn^0_j)_t \big) \pa_{\alpha} \pa_{t}(\Bn^0_j)_s (\Bn^0_j)_k  (\pa_k \zeta^0_j)
\CT^0_j (\lambda)\BF \Bn^0_j, $$
with $\alpha,s,t,k=1,\dots,N,$ arising from the study of
 $\sum_{j \in \BBN} (\pa_{\alpha} \wt\Delta_{\Gamma^0_j} \zeta^0_j) \CT^0_j (\lambda)\BF \Bn^0_j$ in \eqref{es:key_Rbd_FV_2_2}. 
For any $N_0\in \BBN,$ $\Fa=1,\dots,N_0,$ $\BF_{\Fa} \in \CY_q(\Omega)$ and the Rademacher functions $r_{\Fa},$ 
Proposition \ref{prop:sum}, Lemma \ref{lemma:ab_BH} and \eqref{es:normal_0j} imply that 
\begin{align*}
\Big\| \sum_{\Fa=1}^{N_0} r_{\Fa} (\cdot) (\tau \pa_\tau )^{\ell} \big(\CV^{\alpha}_{2,stk}(\lambda_{\Fa})\BF_{\Fa} \big) \Big\|_{L_q(\Omega)}
\leq C_{N,q,r,L}c_0  \Big(\Big\|\sum_{\Fa=1}^{N_0} r_{\Fa} (\cdot) (\tau \pa_\tau )^{\ell} \nabla  \CT^0_j (\lambda_{\Fa})\BF_{\Fa}\Big\|_{\ell_q(\BBN;L_{q}(\Omega^0_j))}\\
+ (\sigma_0^{-1} M_2M_3)^{N\slash (r-N)} \Big\|\sum_{\Fa=1}^{N_0} r_{\Fa} (\cdot) (\tau \pa_\tau )^{\ell}\CT^0_j (\lambda_{\Fa})\BF_{\Fa}\Big\|_{\ell_q(\BBN;L_{q}(\Omega^0_j))}
\Big),
\end{align*}
for any $\sigma_0 \in ]0,1[.$ 
Thus we see from Minkowski's inequalities, \eqref{es:Rbdd_loc} and \eqref{es:fip_2} that,
 \begin{equation*}
 \Big\| \sum_{\Fa=1}^{N_0} r_{\Fa} (\cdot) (\tau \pa_\tau )^{\ell} \big(\CV^{\alpha}_{2,stk}(\lambda_{\Fa})\BF_{\Fa} \big) \Big\|_{L_q([0,1];L_q(\Omega))}
 \leq C r_b  \sigma_0^{-N \slash (r-N)} \lambda_0^{-1}
  \Big\| \sum_{\Fa=1}^{N_0} r_{\Fa} (\cdot) \BF_{\Fa}  \Big\|_{L_q([0,1];\CY_q(\Omega))},
 \end{equation*}
for some constant $C$ solely depending on  $N,r,q,L,c_0,M_2,M_3.$ 
\medskip

Finally, put \eqref{es:key_Rbd_FV_1}, \eqref{es:key_Rbd_FV_2_1} and \eqref{es:key_Rbd_FV_2_2} together and we can conclude \eqref{es:key_Rbd_FV} by choosing $\sigma_0=1\slash 2.$

\section*{Acknowledgement}
 The author would like to thank Professor Yoshihiro Shibata for his warm discussions on this topic.
 X.Z. is supported by the Top Global University Project.
%

\end{document}